\documentclass[12pt,reqno]{amsart}
\usepackage{amsaddr}
\usepackage{amssymb,latexsym,amsmath,epsfig,amsthm,mathrsfs}
\usepackage{rotating}
\usepackage{graphicx}
\usepackage{amssymb}
\usepackage{lineno}
\usepackage{enumitem}
\usepackage{cite}
\usepackage[top=3cm, bottom=3cm, left=3cm, right=3cm]{geometry}
\usepackage[usenames]{color}
\usepackage[colorlinks=true,
linkcolor=blue,
filecolor=blue,
citecolor=blue]{hyperref}

\newtheorem{theorem}{Theorem}[section]
\newtheorem{corollary}[theorem]{Corollary}
\newtheorem{lemma}[theorem]{Lemma}
\theoremstyle{definition}
\newtheorem{definition}[theorem]{Definition}
\theoremstyle{remark}

\theoremstyle{definition}

\newtheorem*{example}{Example}
\numberwithin{equation}{section}

\begin{document}
	
	\title[Signed sumsets and restricted signed sumsets in groups and fields]{Signed sumsets and restricted signed sumsets in groups and fields}
	
	
\author[R K Mistri]{Raj Kumar Mistri}
\address{\em{\small Department of Mathematics, Indian Institute of Technology Bhilai\\
		Durg, Chhattisgarh, India\\
		e-mail: rkmistri@iitbhilai.ac.in}}


\author[N Prajapati]{Nitesh Prajapati$^{*}$}
\address{\em{\small Department of Mathematics, Indian Institute of Technology Bhilai\\
		Durg, Chhattisgarh, India\\
		email: niteshp@iitbhilai.ac.in}}

\thanks{$^{*}$Corresponding author}
\thanks{$^{*}$The research of the author is supported by the UGC Fellowship (NTA Ref. No.: 211610023414)}

\subjclass[2010]{Primary 11P70; Secondary 11B13, 11B75}

	\keywords{sumset, restricted sumset, signed sumsets, restricted signed sumsets, polynomial method, additive combinatorics.}

\begin{abstract}
Let $A = \{a_1, \ldots, a_k\}$ be a nonempty finite subset of an additive abelian group $G$. For a nonnegative integer $h$, the \emph{$h$-fold signed sumset} of $A$, denoted by $h_{\pm} A$, is defined by
$$
h_{\pm} A = \Biggl\{\sum_{i = 1}^{k} \lambda_i a_i : \lambda_i \in \{-h, \ldots, h\}, \ \sum_{i = 1}^{k} |\lambda_i| = h \Biggr\},
$$
and the \emph{restricted $h$-fold signed sumset}, denoted by $h_{\pm}^\wedge A$, is defined by
$$
h_{\pm}^\wedge A = \Biggl\{\sum_{i = 1}^{k} \lambda_i a_i : \lambda_i \in \{-1, 0, 1\}, \ \sum_{i = 1}^{k} |\lambda_i| = h \Biggr\}.
$$
We study direct and inverse problems for these signed sumsets, namely determining extremal bounds for their sizes and characterizing the structure of sets $A$ attaining these bounds. While such problems have been extensively studied and resolved in the additive group of integers, comparatively little is known in general abelian groups, especially for restricted signed sumsets. In this paper, we investigate the signed sumset $h_{\pm} A$ in arbitrary (not necessarily finite) abelian groups under the condition $A \cap (-A) \neq \varnothing$. We further analyze both $h_{\pm} A$ and $h_{\pm}^\wedge A$ when $A \cap (-A)$ has a prescribed size. These results are extended to generalized signed sumsets $H_{\pm} A = \bigcup_{h \in H} h_{\pm} A$, where $H$ is a finite set of nonnegative integers, with particular attention to $[0,h]_{\pm} A$. Furthermore, using the polynomial method, we establish nontrivial lower bounds for $|h_{\pm}^\wedge A|$ in arbitrary fields. In addition, for $h = 2, 3, 4$, we derive lower bounds for $|h_{\pm} A|$ in arbitrary fields under the condition $A \cap (-A) = \varnothing$.
\end{abstract}

	\maketitle
\tableofcontents

\section{Introduction}
	Throughout this paper, let $G$ denote an abelian group, written additively. Let $\mathbb{Z}$ denote the set of integers, and let $\mathbb{F}$ denote an arbitrary field. For integers $u$ and $v$ such that $u \leq v$, we denote the set $\{n \in \mathbb{Z} : u \leq n \leq v\}$ by $[u, v]$. The cardinality of a finite set $A$ is denoted by $|A|$. Let $A = \{a_1, \ldots, a_k\}$ be a nonempty finite subset of $G$, and let $h$ be a nonnegative integer. Then the {\it $h$-fold sumset} $hA$ of the set $A$ is defined as
	\[hA = \left\{\sum_{i = 1}^{k}\lambda_i a_i: \lambda_i \in [0, h] ~\text{for}~ i = 1, \ldots, k ~\text{and}~ \sum_{i = 1}^{k}\lambda_i = h \right\}.\]
The {\em restricted $h$-fold sumset} $h^{\wedge}A$ of the set $A$ is defined as
	\[h^{\wedge}A = \left\{\sum_{i = 1}^{k}\lambda_i a_i: \lambda_i \in [0, 1] ~\text{for}~ i = 1,\ldots, k ~\text{and}~ \sum_{i = 1}^{k}\lambda_i = h \right\}.\]

There are two kinds of problems associated with the sumsets defined above, as well as with other types of sumsets introduced later in this paper: direct problems and inverse problems. The problem of estimating the optimal size of a sumset falls under the first category. A set for which the corresponding sumset attains this optimal size is called an \emph{extremal set}. The problem of characterizing such extremal sets belongs to the second category.

The study of sumsets in groups has a history of more than two centuries. Let $\mathbb{Z}_p$ denote the cyclic group of prime order $p$. A classical result in this area is the \emph{Cauchy--Davenport theorem} (see \cite{cauchy, dav1, dav2}), which asserts that
\[
|A + B| \geq \min(p, |A| + |B| - 1),
\]
where $A, B \subseteq \mathbb{Z}_p$ and $A + B = \{a + b : a \in A,\, b \in B\}$. Dias da Silva and Hamidoune \cite{dias} determined the optimal lower bound for the restricted $h$-fold sumset $h^{\wedge}A$ in an arbitrary field $\mathbb{F}$ using exterior algebra. This result was subsequently reproved by Alon, Nathanson, and Ruzsa \cite{alon1, alon2} via the polynomial method, which has since become a central and powerful technique in additive combinatorics, particularly for problems previously considered intractable. For further background and a comprehensive account of results on sumsets, we refer the reader to the books of Nathanson \cite{nath}, Tao and Vu \cite{tao}, Grynkiewicz \cite{gryn2013book}, and Bajnok \cite{bajnok2018}. 
	
Recently, two other types of sumsets have appeared in the work of Bajnok, Ruzsa, and other researchers (see {\cite{bajnok-matzke2015,bajnok-matzke2016,bajnok-ruzsa2003,bajnok2018,bhanja-kom-pandey2021,bhanja-pandey2019,klopsch-lev2003,klopsch-lev2009,mmp2024}}): the {\it $h$-fold signed sumset} $h_{\pm}A$ and the {\it restricted $h$-fold signed sumset} $h_{\pm}^\wedge A$ which are defined as follows:
	\[ h_{\pm}A = \Biggl\{\sum_{i = 1}^{k}\lambda_ia_i : \lambda_i \in [- h, h] ~\text{for each}~ i \in [1, k],\sum_{i = 1}^{k}|\lambda_i| = h\Biggl\},\] 
	and 
	\[ h_{\pm}^\wedge A = \Biggl\{\sum_{i = 1}^{k}\lambda_ia_i : \lambda_i \in [- 1, 1] ~\text{for each}~ i \in [1, k],\sum_{i = 1}^{k}|\lambda_i| = h\Biggl\}.\] 

Signed sumsets have been studied in various mathematical contexts. The concept of the $h$-fold signed sumset was initially introduced by Bajnok and Ruzsa \cite{bajnok-ruzsa2003}, who investigated it in relation to the independence number of subsets within an abelian group $G$ (see also \cite{bajnok2000, bajnok2004}). Later, Klopsch and Lev \cite{klopsch-lev2003, klopsch-lev2009} explored $h_{\pm}A$ in connection with the diameter of the group $G$ when measured with respect to the set $A$. Bajnok and Matzke initiated a detailed study of the $h$-fold signed sumsets. To state their result, we need the following definition from \cite{bajnok2018}. We also follow the notation used in \cite{bajnok2018}.
\begin{definition}\label{subsec-survey-signed-sumsets-def1}
Let $G$ be an additive abelian group, and let $m$ and $h$ be positive integers. Let $H$ be a nonempty finite set of nonnegative integers. We define the following:
\begin{enumerate}
	\item $\mathrm{Sym}(G, m) = \{A \subseteq G : A = - A, |A| = m\}$. 
	\item $\mathrm{Asym}(G, m) = \{A \subseteq G : A \cap (- A) = \emptyset, |A| = m\}$.
	\item $\mathrm{Nsym}(G, m) = \{A \subseteq G : A \not\in \mathrm{Sym}(G, m), A \setminus \{a\} \in \mathrm{Sym}(G, m - 1) ~\text{for some}~ a \in A\}$.
	\item $\mathcal{A}(G, m) = \mathrm{Asym}(G, m) \cup \mathrm{Sym}(G, m) \cup \mathrm{Nsym}(G, m)$.
		\item {\it $H$-fold sumset of $A$:} 
		\[HA = \bigcup_{h \in H}hA.\]
		\item {\it $H$-fold signed sumset of $A$:}
		\[H_{\pm}A = \bigcup_{h \in H}h_{\pm}A.\]
		\item {\it Restricted $H$-fold signed sumset of $A$:}
		\[H_{\pm}^{\wedge}A = \bigcup_{h \in H}H_{\pm}^{\wedge}A.\]
		\item For positive integer $m$ and $h$, we have
		\[\rho_{\pm}(G, m, H) = \min\{|H_{\pm}A| : A \subseteq G, |A| = m\},\]
		and
		\[\rho_{\pm}^\wedge (G, m, H) = \min \{|H_{\pm}^\wedge A| : A \subseteq G, |A| = m\}.\]
	\end{enumerate}
In all cases, we set $0A=0_{\pm}A=0^{\wedge}A=0_{\pm}^{\wedge}A=\{0\}$. If $H=\{h\}$, we simply write $h$ instead of $\{h\}$ in the notation.
\end{definition}

The direct and inverse problems for signed sumsets and restricted signed sumsets seem difficult in arbitrary abelian groups, even in the cyclic group of prime order. Bajnok and Matzke proved the following result.

\begin{theorem}[\cite{bajnok-matzke2015}, Theorem $3$]\label{bajnok-matzke-ss-min-group-thm}
	Let $h$ and $m$ be positive integers. Let $G$ be a finite abelian group. Then
	\[\rho_{\pm}(G, m, h) = \min \{|h_{\pm}A| : A \in \mathcal{A}(G, m)\}.\]	
\end{theorem}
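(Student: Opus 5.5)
The plan is a reduction argument. Fix $A \subseteq G$ with $|A| = m$ and $|h_{\pm}A| = \rho_{\pm}(G,m,h)$. Among all such minimizers, choose one that is extremal with respect to its \emph{symmetry type}. Decompose $A = S \cup T$, where $S = A \cap (-A)$ is symmetric (it contains all elements of order at most $2$ in $A$), $T \cap (-T) = \emptyset$, and $T \cap (-S) = \emptyset$. Write $s = |S|$ and $t = |T|$. Then $A \in \mathrm{Sym}(G,m)$ iff $t = 0$, $A \in \mathrm{Asym}(G,m)$ iff $s = 0$, and $A \in \mathrm{Nsym}(G,m)$ essentially iff $t = 1$. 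So we may assume $s \geq 1$ and $t \geq 2$, and it suffices to produce $A'$ with $|A'| = m$, $|h_{\pm}A'| \leq |h_{\pm}A|$, and with either $t$ strictly smaller, or $s$ strictly smaller at fixed $t$. Iterating this terminates inside $\mathcal{A}(G,m)$.

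The basic tool is the description
\[
h_{\pm}A=\bigcup_{\varepsilon\in\{\pm1\}^{k}} h^{\ast}(\varepsilon A),
\]
where $\varepsilon A = \{\varepsilon_i a_i\}$ and $h^{\ast}$ denotes sums $\sum \lambda_i \varepsilon_i a_i$ with $\lambda_i \geq 0$ and $\sum \lambda_i = h$. For an element $a \in S$ the sign choice is redundant, since both $a$ and $-a$ already lie in $A$. Hence $h_{\pm}A$ is governed by $h(A \cup -A)$ together with the constraint that each pair $\{x,-x\} \subseteq S$ may be used with total weight in either sign, while each $y \in T$ carries a single free sign. Using this, I would compare $A$ with the two one-step modifications obtained from a fixed pair $\{x,-x\} \subseteq S$ and fixed $y, z \in T$:
\[
A_1 = (A \setminus \{z\}) \cup \{-y\} \quad\text{(symmetrizing } y\text{)}, \qquad
A_2 = (A \setminus \{-x\}) \cup \{w\},
\]
where $w \notin A \cup (-A)$ is chosen suitably, for instance $w = y + x$ or another element of $\langle A\rangle$ (this de-symmetrizes $x$). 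Each of $h_{\pm}A_1$ and $h_{\pm}A_2$ differs from $h_{\pm}A$ only in the sums that involve the changed elements. The goal is to show the inequality
\[
\min\bigl(|h_{\pm}A_1|, |h_{\pm}A_2|\bigr) \leq |h_{\pm}A|
\]
by a compression/averaging argument. Concretely, split each of the three sumsets as $X \cup Y_j$, where $X$ consists of the sums not using the modified coordinates, and then compare the sizes of the $Y_j$ via an injection built from the translation by $2y$ (respectively $2x$) that links the two modifications.

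The main obstacle is exactly this comparison step. Neither modification is monotone on its own: symmetrizing can create new sums of the form $\lambda(y - z)$, and de-symmetrizing can create sums involving $w$. In a general finite abelian group one cannot control these by ordering, as one could in $\mathbb{Z}$. My first attempt would be to take $w$ inside the subgroup generated by the existing sums, so that $h_{\pm}A_2 \subseteq h_{\pm}A \cup (h_{\pm}A + 2x)$, and symmetrically $h_{\pm}A_1 \subseteq h_{\pm}A \cup (h_{\pm}A - 2y)$. Then the double-counting inequality
\[
|Y_1| + |Y_2| \leq 2|Y|
\]
would follow from the fact that the "new" parts in the two cases are disjoint translates of the "lost" parts. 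If that fails, the fallback is to invoke finiteness of $G$ and Kneser-type stabilizer reasoning: pass to the stabilizer $H$ of $h_{\pm}A$ and perform the swap modulo $H$, where periodicity absorbs the extra sums. This is presumably where the hypothesis that $G$ is finite enters.
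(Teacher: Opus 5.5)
\textbf{There is a genuine gap.} The step everything rests on, $\min\bigl(|h_{\pm}A_1|,|h_{\pm}A_2|\bigr)\leq|h_{\pm}A|$, is never proved, and the mechanism you suggest for it fails. The containment $h_{\pm}A_2\subseteq h_{\pm}A\cup(h_{\pm}A+2x)$ with $w=x+y$ is false in general. Take $G=\mathbb{Z}_N$ with $N>400$, $A=\{1,-1,10,100\}$, $h=2$, $x=1$, $y=10$, $z=100$. Then $w=11\notin A\cup(-A)$ and $A_2=\{1,10,11,100\}$, so $111=11+100\in 2_{\pm}A_2$. But $2_{\pm}A=2\{\pm1,\pm10,\pm100\}$ contains neither $111$ nor $109$. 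So the double-counting inequality $|Y_1|+|Y_2|\leq 2|Y|$ has no basis, and the Kneser/stabilizer fallback is a hope rather than an argument.

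The termination scheme is also ill-posed. Since $s+t=m$ is fixed, ``$s$ strictly smaller at fixed $t$'' cannot occur. Mixing a de-symmetrizing move, which raises $t$, with a symmetrizing move, which lowers it, gives no monotone potential.

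\textbf{The missing idea.} You come close to it when you note that signs on $S$ are redundant: if $S\neq\varnothing$ then $h_{\pm}A=h(A\cup(-A))$ exactly (Lemma~\ref{ss-nasym-group-lem:1}). Any cancellation $\lambda y+\mu(-y)$ with $y\in T$ loses an even amount of total weight. That weight can be restored by adding $D c+D(-c)$ for some $c\in S$, or $2Dc$ when $2c=0$.

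With this, your symmetrizing move is monotone on its own, contrary to your claim that it can create new sums. Take distinct $y,z\in T$ and $A_1=(A\setminus\{z\})\cup\{-y\}$. Then $|A_1|=m$, $A_1\cap(-A_1)=S\cup\{\pm y\}\neq\varnothing$, and $A_1\cup(-A_1)=(A\cup(-A))\setminus\{\pm z\}$. Hence $h_{\pm}A_1=h(A_1\cup(-A_1))\subseteq h(A\cup(-A))=h_{\pm}A$. Sums using both $y$ and $-y$ were already available in $A$ via padding.

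Each such step lowers $t$ by exactly $2$, so after finitely many steps $t\in\{0,1\}$, which places the set in $\mathrm{Sym}(G,m)\cup\mathrm{Nsym}(G,m)$. If $s=0$ from the outset, $A\in\mathrm{Asym}(G,m)$ already.

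This is precisely the argument the paper gives for its generalization, Theorem~\ref{ss-group-ext-bajnok-thm:1}, via Lemma~\ref{ss-nasym-group-lem:3}. No de-symmetrizing move, averaging, or stabilizer argument is needed, and finiteness of $G$ plays no role.
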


In a subsequent work, they investigated the cases when $\rho_{\pm}(G, m, h) = \rho(G, m, h)$ for elementary abelian groups \cite{bajnok-matzke2016}. More recently, Bhanja and Pandey \cite{bhanja-pandey2019} studied the direct and inverse problems for $h$-fold signed sumsets in the additive group $\mathbb{Z}$ of integers. For the restricted signed sumset $h^{\wedge}_{\pm}A$, relatively little is known, even in the additive group of integers $\mathbb{Z}$. In this direction, Bhanja, Komatsu, and Pandey \cite{bhanja-kom-pandey2021} recently resolved  both the direct and inverse problems for $h^{\wedge}_{\pm}A$ in $\mathbb{Z}$ for the cases $h = 2$ and $h = |A|$. They proposed conjectures for the cases $3 \leq h \leq |A| - 1$, and confirmed the conjectures for $h = 3$. 
Mohan, Mistri, and Pandey \cite{mmp2024} made further progress on these conjectures and confirmed the conjectures for $h = 4$. Finally, these conjectures have been completely setteled by the authors in \cite{mn2026}. Recently,  Mohan \cite{mohan2026} has studied some extended inverse theorems for $h_{\pm}^\wedge A$ when $h \in \{2,3,k\}$.

Theorem \ref{bajnok-matzke-ss-min-group-thm} shows that it suffices to consider sets in the class $\mathcal{A}(G, m)$. A natural question then arises: what happens if we restrict our attention to sets from the class $\mathrm{Asym}(G, m)$ or from the class $\mathrm{Sym}(G, m)$? More generally, we investigate the $h$-fold signed sumset and the restricted $h$-fold signed sumset of a set $A$ when $A$ intersects exactly a fixed number of elements of $-A$. We extend this study to the more general signed sumset $H_{\pm}A$ for a nonempty finite subset $H$ of nonnegative integers.

For a nonnegative integer $s$ and a set $H$ of nonnegative integers, we define
\[
\rho_{\pm}^{(s)}(G, m, H) = \min \{|H_{\pm}A| : A \subseteq G, \ |A| = m, \ \text{and} \ |A \cap (-A)| = s\}.
\]
In particular, if $H = \{h\}$, then we denote $\rho_{\pm}^{(s)} (G, m, H)$ by $\rho_{\pm}^{(s)} (G, m, h)$. In this paper, we establish the following main results. 

\begin{theorem}\label{ss-group-ext-bajnok-thm:1}
	Let $m$ be positive integer, and let $H$ be nonempty finite set of nonnegative integers, and let $G$ be an additive abelian group. Then
	\[\rho_{\pm}(G, m, H) = \min \{|H_{\pm}A| : A \in \mathcal{A}(G, m)\}.\]
\end{theorem}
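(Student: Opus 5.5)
The strategy is the one behind Theorem \ref{bajnok-matzke-ss-min-group-thm}, but without using finiteness of $G$. Fix $A\subseteq G$ with $|A|=m$. It suffices to produce $A'\in\mathcal{A}(G,m)$ with $h_{\pm}A'\subseteq h_{\pm}A$ for every $h\ge 0$. Taking the union over $h\in H$ then gives $|H_{\pm}A'|\le|H_{\pm}A|$. Together with the trivial inequality in the other direction, this yields the theorem.

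Write $S=A\cap(-A)$, which is symmetric, and $B=A\setminus S$. Then $B\cap(-B)=\varnothing$. In particular $b\neq -b$ for $b\in B$, and $\hat A:=A\cup(-A)$ is the disjoint union $S\cup B\cup(-B)$.

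\textbf{Reformulation of $h_{\pm}A$.} A term $\lambda a$ with $a\in A$ equals $|\lambda|\cdot(\pm a)$ with $\pm a\in\hat A$. From this I would check that $h_{\pm}A$ is the set of sums $x_1+\cdots+x_h$ with $x_j\in\hat A$, subject to one constraint: for $b\in B$, the elements $b$ and $-b$ never both occur.
\begin{itemize}
\item For $s\in S$, both $s$ and $-s$ lie in $A$ and carry independent coefficients. So sums using $s$ and $-s$ simultaneously are allowed, as long as the total count is $h$.
\item For $b\in B$, the single coefficient of $b$ has one sign. So $b$ and $-b$ cannot both appear.
\end{itemize}
If $A$ is symmetric this simply says $h_{\pm}A=h\hat A$.

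\textbf{Case analysis.} If $S=\varnothing$, then $A\in\mathrm{Asym}(G,m)$ and we take $A'=A$. Otherwise fix $s_0\in S$, so that $-s_0\in S$ too. Choose $t=\lfloor (m-|S|)/2\rfloor$ elements $b_1,\dots,b_t\in B$; this is possible since $t\le|B|=m-|S|$. Put
\[
A'=S\cup\{\pm b_1,\dots,\pm b_t\},
\]
and, if $m-|S|$ is odd, adjoin one further $b_{t+1}\in B$. Because $S$, $B$ and $-B$ are pairwise disjoint and $b\neq -b$, we get $|A'|=m$. The symmetric part $S\cup\{\pm b_i\}_{i\le t}$ shows that $A'$ lies in $\mathrm{Sym}(G,m)$ or in $\mathrm{Nsym}(G,m)$. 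Moreover $A'\cup(-A')\subseteq\hat A$.

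\textbf{Inclusion $h_{\pm}A'\subseteq h_{\pm}A$.} By the reformulation applied to $A'$, an element of $h_{\pm}A'$ is a sum of $h$ elements of $A'\cup(-A')\subseteq\hat A$. The only obstruction to it lying in $h_{\pm}A$ is a simultaneous occurrence of some $b_i$ and $-b_i$. Each such pair contributes $b_i+(-b_i)=0=s_0+(-s_0)$. So I replace every cancelling pair $\{b_i,-b_i\}$, however many times it occurs, by a pair $\{s_0,-s_0\}$.
\begin{itemize}
\item This keeps the sum and the count $h$ unchanged.
\item It removes all forbidden coincidences, since $s_0$ and $-s_0$ may be used freely with arbitrary multiplicity.
\item It also covers $s_0=-s_0$, including $s_0=0$: then $s_0+s_0=0$, and the coefficient of $s_0$ still lies in $[-h,h]$ because the total count is $h$.
\end{itemize}
After replacement the remaining occurrences of each $b\in B$ are all $b$ or all $-b$, so the sum lies in $h_{\pm}A$.

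\textbf{Main obstacle.} The step needing the most care is the reformulation lemma and the replacement argument. One must state precisely that $\lambda$-coefficients correspond to multisets from $\hat A$, check that after replacement each element of $A$ gets a single integer coefficient with $\sum|\lambda_i|=h$, and handle the degenerate elements of $S$ with $2s=0$. Nothing in the argument uses finiteness of $G$, which is why the statement holds for arbitrary abelian groups and arbitrary finite $H$.
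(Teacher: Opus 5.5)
Your proposal is correct and follows essentially the paper's route. Your one-shot set $S\cup\{\pm b_1,\dots,\pm b_t\}$ (plus possibly $b_{t+1}$) is exactly what the iterated swaps $A\mapsto (A\cup\{-b\})\setminus\{a\}$ of Lemma \ref{ss-nasym-group-lem:3} produce, and your replacement of cancelling pairs $\{b,-b\}$ by $\{s_0,-s_0\}$ is the mechanism behind Lemma \ref{ss-nasym-group-lem:1}, namely $h_{\pm}A=h(A\cup(-A))$ when $A\cap(-A)\neq\varnothing$, which the paper combines with $A'\cup(-A')\subseteq A\cup(-A)$; the only differences are that you work with an arbitrary $A$ rather than a minimizer and prove the needed inclusion directly rather than through that identity.
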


Above theorem generalizes Theorem \ref{bajnok-matzke-ss-min-group-thm}. Another result proved for the sumset $H_{\pm}A$ is Theorem \ref{ss-asym-group-thm:13}.  The following theorem establishes a nontrivial lower bound for $\rho_{\pm}^{(s)} (G, m, h)$. 

\begin{theorem}\label{ss-nasym-group-thm:5}
	Let $h$, $m$ and $s$ be positive integers with $s \leq m$. Let $G$ be an additive abelian group. Then 
	 \begin{equation}\label{ss-nasym-group-thm:5-eq:1}
			\rho_{\pm}^{(s)} (G, m, h) \geq \min (p(G), 2hm - hs - h + 1).
		\end{equation}
	The equality holds in \eqref{ss-nasym-group-thm:5-eq:1} if and only if $m \leq \dfrac{ s + p(G)}{2}$.
\end{theorem}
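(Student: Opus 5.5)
The plan is to reduce the signed sumset to an ordinary $h$-fold sumset of the symmetric hull $B = A \cup (-A)$, and then apply a Cauchy--Davenport type bound in $G$. Here $p(G)$ denotes the smallest order of a nontrivial finite subgroup, or $\infty$ if there is none. Since $|A \cap (-A)| = s$, we have $|B| = 2m - s$.

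\emph{Key step: the identity $h_{\pm}A = hB$ when $s \geq 1$.} The inclusion $h_{\pm}A \subseteq hB$ is immediate, because a signed sum with $\sum|\lambda_i| = h$ is a sum of $h$ elements of $B$ counted with multiplicity. For the reverse inclusion, take $x = b_1 + \cdots + b_h$ with $b_j \in B$.
\begin{itemize}
\item Each $b_j$ in the symmetric part $A \cap (-A)$ is written as $+1$ times that element of $A$.
\item Each $b_j \in B \setminus (A \cap (-A))$ is $\pm a$ for a unique $a \in A \setminus (-A)$.
\end{itemize}
The only obstruction to obtaining coefficients $\lambda_i$ of a single sign with $\sum|\lambda_i| = h$ is a cancelling pair $a + (-a)$ with $a \in A \setminus (-A)$. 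Because $s \geq 1$, we can fix $c \in A$ with $-c \in A$. We replace every such cancelling pair by $c + (-c)$, using coefficient $+1$ on both $c$ and $-c$ as elements of $A$; if $c = -c$, this means coefficient $2$ on $c$. This preserves the value of $x$ and the total length $h$. Since the symmetric part always receives nonnegative coefficients, the signs stay consistent, so $x \in h_{\pm}A$. I expect this bookkeeping, namely that the sign conventions never conflict (including elements of order $2$), to be the main point needing care.

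\emph{Lower bound.} Iterate the generalized Cauchy--Davenport inequality $|X + Y| \geq \min(p(G), |X| + |Y| - 1)$, valid in any abelian group (Kneser/Kemperman). This gives
\[
|hB| \geq \min\bigl(p(G),\, h|B| - h + 1\bigr) = \min\bigl(p(G),\, 2hm - hs - h + 1\bigr),
\]
which is \eqref{ss-nasym-group-thm:5-eq:1}.

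\emph{Equality when $m \leq (s + p(G))/2$.} In this case $2m - s \leq p(G)$. Choose $g$ of order $p(G)$, or of infinite order if $p(G) = \infty$.
\begin{itemize}
\item If $s$ is odd, take $B$ to be the symmetric progression $\{jg : |j| \leq (2m-s-1)/2\}$.
\item If $s$ is even, take $B = \{(2j+1)d : \ldots\}$ with $2d = g$, which is possible since $p(G)$ is odd when it exceeds $2$; the case $p(G) = 2$ is handled trivially.
\end{itemize}
Next choose $A \subseteq B$ consisting of a symmetric block of $s$ central terms together with one representative from each of the remaining $(2m-s-s)/2 = m - s$ opposite pairs. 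Then $|A| = m$, $|A \cap (-A)| = s$ and $A \cup (-A) = B$. Since $B$ is an arithmetic progression of length at most $p(G)$, the set $hB$ is an arithmetic progression of length $\min(p(G), h(|B| - 1) + 1)$, because progressions in $\langle g \rangle$ wrap around only once their length reaches the order. So equality holds.

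\emph{Strict inequality when $m > (s + p(G))/2$.} Here $|B| = 2m - s > p(G)$, so the bound equals $p(G)$. On the other hand, $hB$ contains a translate of $B$, since it contains $(h-1)b_0 + B$ for any $b_0 \in B$. Hence $|h_{\pm}A| \geq |B| > p(G)$, and equality fails. This proves the ``if and only if''.
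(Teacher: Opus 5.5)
Your proof is correct and follows essentially the same route as the paper: the identity $h_{\pm}A = h(A \cup (-A))$ when $A \cap (-A) \neq \varnothing$, then the iterated Cauchy--Davenport (DeVos/Kneser) bound, then $|hB| \ge |B| > p(G)$ for the necessity of $2m - s \le p(G)$, and finally symmetric arithmetic progressions (integers, respectively odd multiples) placed in a subgroup of order $p(G)$ for sufficiency. Your separate treatment of $p(G) = 2$ is genuinely needed, since there the paper's map $x \mapsto xg$ identifies $1$ and $-1$ in the case $s = 2$; in the torsion-free case, pick $d$ of infinite order directly rather than solving $2d = g$, which may have no solution.
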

We have also proved some other results for $\rho_{\pm}^{(s)} (G, m, h)$ (see Theorem \ref{ss-group-thm:20}), and for $\rho_{\pm}(G, m, [0, h])$ (see Theorem \ref{ss-group-thm:2}). Furthermore, using the polynomial method, we obtain sharp lower bounds on the size of the signed sumsets $h_{\pm}A$ for $h \in \{2, 3, 4\}$ in an arbitrary field $\mathbb{F}$ (see Theorem \ref{ss-asym-prime-thm:9}, Theorem \ref{ss-asym-prime-thm:10} and Theorem \ref{ss-asym-prime-thm:11}). We also derive nontrivial lower bound for the size of the restricted $h$-fold signed sumset $h^{\wedge}_{\pm}A$ in $\mathbb{F}$. More precisely, we prove the following theorem.

\begin{theorem}\label{rss-field-thm:3}
	Let $\mathbb{F}$ be a field. Let $k$ and $h$ be positive integers such that $2 \leq h \leq k$ and $h - 1 \leq p(\mathbb{F})$. Let $A$ be a nonempty subset $\mathbb{F}$ with $|A| = k$. Let 
\[\theta = 2hk - \frac{h(3h - 1)}{2} - h |A \cap (- A)|.\]

Then
	\begin{equation*}
		|h_{\pm}^{\wedge} A|\geq
		\begin{cases}
			\min (p(\mathbb{F}), hk - h^2 + 1), & ~\text{if}~ \min(p(\mathbb{F}), \theta + 1) \leq hk - h^2 + 1;\\
			\theta + 1, & ~\text{if}~  hk - h^2 + 1 < \theta + 1 \leq p(\mathbb{F});\\
			\max (hk - h^2 + 1, \theta - \ell h + 1), & ~\text{if}~ hk - h^2 + 1  < p(\mathbb{F}) < \theta + 1,
		\end{cases}
	\end{equation*}
where $\ell$ is the least positive integer such that 
\[\theta - \ell h + 1 \leq p(\mathbb{F}) < \theta - (\ell - 1)h + 1.\]	
\end{theorem}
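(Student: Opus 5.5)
We reduce the restricted signed sumset to restricted sumsets of a single set and then use the Dias da Silva--Hamidoune theorem in its polynomial-method form. For an arbitrary field $\mathbb{F}$ of characteristic $p = p(\mathbb{F})$ (with $p = \infty$ in characteristic zero), that result states $|h^{\wedge}B| \geq \min(p, h|B| - h^2 + 1)$.

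Write $A \cap (-A) = S$ with $|S| = s$. The set $S$ is symmetric; let $S_0 = S \cap \{0\}$. Choose $S^+ \subseteq S$ containing exactly one element from each pair $\{x, -x\}$ with $x \neq 0$, and set $T = A \setminus S$. Then $T \cap (-T) = \varnothing$.

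First, the lower bound $\min(p, hk - h^2 + 1)$. Since $h^{\wedge}A \subseteq h_{\pm}^{\wedge}A$, this follows immediately from the Dias da Silva--Hamidoune bound applied to $B = A$. The hypothesis $h - 1 \leq p$ is what keeps the polynomial-method argument valid here. This settles the first case of the theorem and also the $\max$ term in the third case.

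Next, the bound $\theta + 1$. Form the set $B = A \cup (-T)$. Because $T \cap (-T) = \varnothing$ and $(-T) \cap A = \varnothing$, we get $|B| = k + (k - s) = 2k - s$. Every element of $h^{\wedge}B$ that does not use both $t$ and $-t$ for any $t \in T$, and does not use both $x$ and $-x$ for any $x \in S$, is a restricted signed sum of $A$. The problem is that $h^{\wedge}B$ also contains sums in which such cancelling pairs appear.

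To control this, I would run the Alon--Nathanson--Ruzsa polynomial argument directly. Suppose $|h_{\pm}^{\wedge}A| \leq \theta$. Choose a set $C$ of size $\theta$ containing $h_{\pm}^{\wedge}A$, and consider
\[
F(x_1, \ldots, x_h) = \prod_{i<j}(x_j - x_i)(x_j + x_i) \cdot \prod_{c \in C}(x_1 + \cdots + x_h - c)
\]
on the grid $B^h$, suitably adjusted for the symmetric part $S$. The Vandermonde-type factor $\prod_{i<j}(x_j^2 - x_i^2)$ kills every tuple with repeated entries or an opposite pair. It has degree $h(h-1)$. Together with $\theta = h(2k - s) - \frac{h(3h-1)}{2}$, the total degree is $\theta + h(h-1) = \sum_i (|B_i| - 1)$ for sets $B_i$ of sizes $|B| - (h - i)$ or similar.

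The main obstacle is computing the relevant coefficient of the monomial $\prod x_i^{|B_i| - 1}$ and showing it is nonzero in characteristic $p$. The coefficient should reduce to a multinomial coefficient times a determinant of a matrix of binomial-type entries, such as $\binom{2\theta}{\ldots}$-type numbers, in the spirit of Lemma-type computations for $\prod (x_j^2 - x_i^2)$. These are nonzero whenever $\theta + 1 \leq p$. If this does not match cleanly, I would instead pass to the squares: set $y_i = x_i^2$, which turns $\prod(x_j^2 - x_i^2)$ into an honest Vandermonde, and apply Combinatorial Nullstellensatz in the $y$-variables.

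With the nonvanishing coefficient, the Nullstellensatz gives a contradiction, so $|h_{\pm}^{\wedge}A| \geq \theta + 1$ when $\theta + 1 \leq p$. Taking the maximum with the first bound gives the second case.

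Finally, the case $hk - h^2 + 1 < p < \theta + 1$. Here I would truncate: replace $\theta$ by $\theta' = \theta - \ell h$ by deleting $\ell$ elements from $B$. Each deletion lowers the degree budget by $h$, so the new $\theta'$ satisfies $\theta' + 1 \leq p$ and the coefficient argument applies. This yields the bound $\theta - \ell h + 1$. The choice of $\ell$ in the statement is exactly the minimal number of deletions needed, and combining this with the first bound gives the stated maximum.
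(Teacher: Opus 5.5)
Your overall plan matches the paper's. Case 1 follows from $h^{\wedge}A\subseteq h^{\wedge}_{\pm}A$ and Dias da Silva--Hamidoune. Case 2 uses a polynomial-method bound for sums $x_1+\cdots+x_h$ with $x_i\in B=A\cup(-A)$ and $x_i^2\neq x_j^2$. Your $B=A\cup(-T)$ is just $A\cup(-A)$, and, as you note, every such sum lies in $h^{\wedge}_{\pm}A$. Case 3 shrinks $B$ by $\ell$ elements. Your degree count is also right: with $|B_i|=|B|-h+i$ one gets $\sum_i(|B_i|-1)=\theta+h(h-1)$.

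\textbf{The gap.} The entire content of cases 2 and 3 is the nonvanishing of the coefficient of $\prod_i x_i^{|B_i|-1}$ in $(x_1+\cdots+x_h)^{\theta}\prod_{i<j}(x_j^2-x_i^2)$, and you only assert it. The paper does not compute it either. It quotes the Liu--Sun theorem on sums with polynomial restrictions $P_i(a_i)\neq P_j(a_j)$, taking $P_i(t)=t^2$ and $A_i=A\cup(-A)$; in case 3 it takes subsets of sizes $k'-h+i$ with $k'=|A\cup(-A)|-\ell$. That theorem is exactly the lemma your argument is missing. Your fallback of substituting $y_i=x_i^2$ cannot replace it. The factor $\prod_{c\in C}(x_1+\cdots+x_h-c)$ is not a polynomial in the $y_i$, and squaring identifies $x$ with $-x$ in the symmetric set $B$, so no grid in the $y$-variables sees the sums.

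\textbf{The claimed nonvanishing is false as stated.} You say the coefficient is nonzero whenever $\theta+1\le p(\mathbb{F})$. Write $c_j=|B_j|-1=|B|-h+j-1$ and expand $\prod_{i<j}(x_j^2-x_i^2)=\det\bigl(x_j^{2i}\bigr)$. The coefficient then equals $\theta!\,\det\bigl(1/(c_j-2i)!\bigr)_{0\le i\le h-1,\,1\le j\le h}$, with $1/n!=0$ for $n<0$.
\begin{itemize}
\item If $|B|\le 2h-2$, every $c_j\le 2h-3$. The row $i=h-1$ then vanishes, and the coefficient is $0$ in every characteristic.
\item This is unavoidable. $B$ is symmetric, so in characteristic $\neq 2$ it contains only $\lceil |B|/2\rceil\le h-1$ distinct squares, and no admissible $h$-tuple exists.
\item Concretely, take $h=k=4$ and $A=\{a,-a,b,c\}$ with $|A\cap(-A)|=2$. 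Then $\theta=2$ and $hk-h^2+1=1<3\le p(\mathbb{F})$ for every $p(\mathbb{F})\ge 3$, so you are in case 2. Yet the restricted set is empty.
\end{itemize}

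\textbf{What a complete argument needs.} The polynomial step requires the extra hypothesis $|A\cup(-A)|\ge 2h-1$, and correspondingly $k'\ge 2h-1$ in case 3. The remaining configurations need a separate argument. In the example this is easy: $4^{\wedge}_{\pm}A=\{0,\pm 2a\}+\{\pm b\pm c\}$ trivially has at least $3$ elements. The version of Liu--Sun quoted in the paper also omits this size condition, so the paper's proof has the same edge case to patch. Even when $|B|\ge 2h-1$, you must still show that $\det\bigl(c_j!/(c_j-2i)!\bigr)$ is nonzero modulo $p$ for consecutive $c_j$, or cite Liu--Sun for it.
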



\subsection*{Organization of the paper}
The paper is organized as follows. In Section \ref{sec-signed-sum-group-aux-lemma}, we recall known results and establish several auxiliary lemmas needed in the sequel. In Section \ref{sec-signed-sum-group}, we extend a result of Bajnok and Matzke \cite{bajnok-matzke2015} (see Theorem \ref{ss-group-ext-bajnok-thm:1}) to arbitrary abelian groups and obtain a nontrivial bound for the size of the $H$-fold signed sumset. We also derive direct and inverse results for $H_{\pm}A$ under the assumption $A \cap (-A) \neq \varnothing$ in the case $H = \{h\}$. In addition, we study the functions $\rho_{\pm}^{(s)}(G,m,h)$ and $\rho_{\pm}^{(s)}(G,m,[0,h])$ for an arbitrary abelian group $G$. Using the polynomial method, we further obtain sharp lower bounds for $|h_{\pm}A|$ when $h \in \{2,3,4\}$ in an arbitrary field $\mathbb{F}$ under the condition $A \cap (-A) = \varnothing$. In Section \ref{sec-res-signed-sum-group}, we establish a nontrivial lower bound for the restricted signed sumset $h_{\pm}^{\wedge}A$ in $\mathbb{F}$. Moreover, we prove lower bounds for $|[0,h]_{\pm}^{\wedge}A|$ in a finite abelian group $G$ and establish the corresponding result to arbitrary fields. 
	
\section{Auxiliary results}\label{sec-signed-sum-group-aux-lemma}
In this section, we present some known results and also prove some auxiliary lemmas that are required for the proof of the main results. A set $A \subseteq G$ is called a \emph{$k$-term arithmetic progression with common difference $d$} (or simply an \emph{arithmetic progression with common difference $d$}) if $A = \{ a + i d : i \in [0, k-1] \}$ for some $a \in G$ and $0 \neq d \in G$. For an element $g$ of $G$, the subgroup generated by $g$ is denoted by $\langle g\rangle$. For a given subset $H \subseteq G$ and a nonzero integer$d$, we define $d \ast H = H \ast d = \{dh : h \in H\}$. In particular, $- H = - 1 \ast H$. Given a set of integers $A$ and an element $g$ of $G$, we define $A \ast g$ as the set $\{ag : a \in A\}$. The following theorem gives the optimal bound on the size of the sumset $hA$, and characterize the extremal sets.

\begin{theorem}[\cite{nath}, Theorem $1.3$ and Theorem $1.6$]\label{hfold-direct-thm}
	Let $h$ and $k$ be positive integers such that $h \geq 2$. Let $A$ be a set of integers with $|A| = k$. Then
	\begin{equation}\label{hfold-direct-thm-eq1}
		|hA| \geq hk - h + 1
	\end{equation}
	This lower bound is best possible. Moreover, the equality holds in \eqref{hfold-direct-thm-eq1} if and only if $A$ is a $k$-term arithmetic progression, provided $k \geq 2$.
\end{theorem}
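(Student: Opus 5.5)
We would order the elements as $A=\{a_0<a_1<\cdots<a_{k-1}\}$ and prove the lower bound by induction on $h$ through the elementary inequality $|X+Y|\ge |X|+|Y|-1$ for finite nonempty sets of integers. That inequality has a one-line proof. If $X=\{x_0<\cdots<x_{r-1}\}$ and $Y=\{y_0<\cdots<y_{s-1}\}$, the chain
\[
x_0+y_0<x_0+y_1<\cdots<x_0+y_{s-1}<x_1+y_{s-1}<\cdots<x_{r-1}+y_{s-1}
\]
lists $r+s-1$ distinct elements of $X+Y$.

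Since $hA=(h-1)A+A$ (with $1A=A$), we get $|hA|\ge |(h-1)A|+k-1$. Iterating from $|A|=k$ gives
\[
|hA|\ge k+(h-1)(k-1)=hk-h+1,
\]
which is \eqref{hfold-direct-thm-eq1}. Sharpness is seen from an arithmetic progression $A=\{a+id: i\in[0,k-1]\}$. There $hA=\{ha+jd : j\in[0,h(k-1)]\}$, which has exactly $hk-h+1$ elements. This also settles the ``if'' direction of the inverse statement.

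For the ``only if'' direction we would exploit the tightness of the induction. Each step satisfies $|jA|\ge |(j-1)A|+k-1$, and the steps telescope to $hk-h+1$. So if $|hA|=hk-h+1$, every step is an equality, and in particular $|2A|=2k-1$. It therefore suffices to treat $h=2$.

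For $h=2$ and $k\ge 3$, we compare two chains of $2k-1$ distinct elements of $2A$, each listed increasingly from $2a_0$ to $2a_{k-1}$:
\[
2a_0<a_0+a_1<2a_1<a_1+a_2<2a_2<\cdots<2a_{k-1},
\]
and, for a fixed $i\in[1,k-2]$, the same chain with the segment $a_{i-1}+a_i<2a_i<a_i+a_{i+1}$ replaced by $a_{i-1}+a_i<a_{i-1}+a_{i+1}<a_i+a_{i+1}$. Both chains have size $2k-1=|2A|$, so each must exhaust $2A$ in increasing order. Comparing them at the same position forces $a_{i-1}+a_{i+1}=2a_i$ for every $i\in[1,k-2]$, so $A$ is an arithmetic progression. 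The case $k=2$ is trivial.

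The main point to get right is this step: the tightness of the chain at $h=2$ must pin down each middle term. The telescoping reduction from general $h$ to $h=2$ is routine once equality propagates through all the steps, as shown above.
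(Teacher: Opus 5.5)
Your proof is correct and is essentially the standard argument from Nathanson's book, which the paper cites without reproducing. That argument has three steps: iterate $|X+Y|\ge |X|+|Y|-1$ along $hA=(h-1)A+A$, use tightness to force $|2A|=2k-1$, and compare the two chains to obtain $2a_i=a_{i-1}+a_{i+1}$.
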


DeVos \cite{devos2016} proved the following result in arbitrary groups (not necessarily finite).

\begin{theorem}\label{devos-extension}
	Let $A$ and $B$ be nonempty finite subsets of an additive group $G$. Then 
	\[|A + B| \geq \min (p(G), |A| + |B| - 1),\]
	where $A + B = \{a + b : a \in A, b \in B\}$. 
\end{theorem}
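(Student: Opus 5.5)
The statement immediately above is DeVos's extension of the Cauchy--Davenport theorem to arbitrary groups, Theorem \ref{devos-extension}. Here $p(G)$ denotes the smallest order of a nontrivial finite subgroup of $G$, with $p(G)=\infty$ if there is none. The plan is the classical induction via the Dyson $e$-transform, written so that it uses only subgroup orders and no counting in a finite group.

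We induct on $|B|$, and for fixed $|B|$ we take a counterexample $(A,B)$ with $|A|$ minimal and then $|A|+|B|$ handled lexicographically. If $|B|=1$, then $|A+B|=|A|$ and there is nothing to prove.

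Now suppose $|B|\ge 2$. Translating $B$ (which translates $A+B$), we may assume $0\in B$. For $e\in A-B$, the Dyson transform is
\[
A(e)=A\cup(B+e),\qquad B(e)=B\cap(A-e).
\]
It satisfies $A(e)+B(e)\subseteq A+B$ and $|A(e)|+|B(e)|=|A|+|B|$, and $B(e)$ is nonempty because $e\in A-B$.

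If some $e$ gives $|B(e)|<|B|$, the induction hypothesis applies to $(A(e),B(e))$ and yields
\[
|A+B|\ge |A(e)+B(e)|\ge \min\bigl(p(G),\,|A|+|B|-1\bigr),
\]
and we are done.

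Otherwise every $e\in A-B$ satisfies $B\subseteq A-e$, that is, $e+B\subseteq A$. Choosing $e\in A$ (allowed since $0\in B$), this says $A+B\subseteq A$ for the translates arising, and in fact $A+b\subseteq A$ for every $b\in B$. Since $A$ is finite, $A+b=A$. So $A$ is a union of cosets of the subgroup $K$ generated by $B$, and $K$ is nontrivial because $|B|\ge 2$ and $0\in B$. Finiteness of $A$ forces $K$ to be finite, so $p(G)\le|K|\le|A|\le|A+B|$ and the bound holds.

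The main point to check is this last step. The hypothesis for the contradiction is only that $B(e)=B$ for all $e\in A-B$, and we must confirm that this gives $A+b\subseteq A$ for every $b\in B$. Take $a\in A$ and $b\in B$, and set $e=a-b'$ with $b'$ chosen in $B$. Then $e+B\subseteq A$ for all choices of $b'$. Taking $b'=0$, so $e=a$, gives $a+B\subseteq A$ for every $a\in A$, which is exactly $A+B\subseteq A$. Hence $A+B=A$, which is stable under each $b\in B$ and therefore under the group generated by $B$.

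The only subtlety is that in infinite $G$ the generated subgroup could be infinite; this is excluded because it stabilizes the nonempty finite set $A$. With that, the induction closes and the theorem follows.
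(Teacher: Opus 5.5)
Your argument is correct. The paper gives no proof of this statement; it quotes it from DeVos, whose theorem holds for arbitrary, possibly nonabelian, groups (with $A+B$ read as the product set $AB$). You instead run the classical Dyson $e$-transform induction on $|B|$, arranged so that the invariant $p(G)$ enters only in the terminal case. If no $e\in A-B$ makes $B(e)=B\cap(A-e)$ a proper subset of $B$, then, with $0\in B$, you get $A+B=A$. Hence $A+b=A$ for every $b\in B$, so $A$ is a union of cosets of the nontrivial subgroup $\langle B\rangle$. That subgroup must be finite with $|\langle B\rangle|\le |A|=|A+B|$, which gives $|A+B|\ge p(G)$.

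What your route buys is a self-contained, elementary proof, plus a little structural information in the non-reducible branch. What it costs is generality. The inclusion $A(e)+B(e)\subseteq A+B$ uses commutativity: you need to rewrite $(b+e)+y$, where $y+e\in A$, as the element $(y+e)+b$ of $A+B$. So your proof covers only abelian $G$, whereas DeVos's covers the nonabelian case too. Since the paper assumes $G$ abelian throughout and applies the theorem only there (e.g.\ in Lemma~\ref{group-h-fold-lem:1}), your version suffices for every use the paper makes of it.

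A minor point: the ``minimal counterexample, lexicographic'' setup is superfluous. You only ever invoke the hypothesis for pairs with strictly smaller $|B|$, so ordinary strong induction on $|B|$ is exactly what your argument uses.
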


For a group $G$, we define $p(G)$ to denote the order of the smallest nontrivial subgroup of $G$, or $\infty$ if no such subgroup exists. For a positive integers $m$ and $h$, we define
\[\rho (G, m, h) = \min \{|hA| : A \subseteq G, |A| = m\}.\] 

Plagne proved the following result.

\begin{theorem}[\cite{plagne2006}]\label{plagne-group-h-fold-thm:1}
	Let $G$ be any finite abelian group of order $n \geq m$, and let $p(G)$ be the smallest prime divisor of $n$. Then 
	\[\rho (G, m, h) \geq \min (p(G),hm - h + 1),\] 
	with equality if, and only if $m \leq p(G)$.	
\end{theorem}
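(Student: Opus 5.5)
The plan is to get the lower bound by induction on $h$ from Theorem \ref{devos-extension}, to get equality when $m \leq p(G)$ from an arithmetic progression inside a subgroup of order $p(G)$, and to show strict inequality when $m > p(G)$ by a cardinality comparison. Throughout, $p = p(G)$ is the smallest prime divisor of $n = |G|$. Then $G$ has an element $g$ of order $p$ by Cauchy's theorem, so $p$ is also the order of the smallest nontrivial subgroup, and the quantity $p(G)$ in Theorem \ref{devos-extension} agrees with this $p$.

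\textbf{Lower bound.} Fix $A \subseteq G$ with $|A| = m$ and induct on $h$. For $h = 1$ we have $|A| = m \geq \min(p, m)$. For $h \geq 2$, write $hA = (h-1)A + A$ and apply Theorem \ref{devos-extension}:
\[
|hA| \geq \min\bigl(p, |(h-1)A| + m - 1\bigr).
\]
By the induction hypothesis, $|(h-1)A| \geq \min(p, (h-1)m - h + 2)$. If this minimum is $p$, then $|(h-1)A| + m - 1 \geq p$, so $|hA| \geq p$. Otherwise $|(h-1)A| + m - 1 \geq hm - h + 1$. In both cases $|hA| \geq \min(p, hm-h+1)$. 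Since $A$ was arbitrary, $\rho(G,m,h) \geq \min(p, hm-h+1)$.

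\textbf{Equality when $m \leq p$.} Take $A = \{0, g, 2g, \ldots, (m-1)g\} \subseteq \langle g\rangle$; these $m$ elements are distinct because $m \leq p = \operatorname{ord}(g)$. Then
\[
hA = \{0, 1, \ldots, h(m-1)\} \ast g .
\]
Its size is $h(m-1)+1$ if $h(m-1) + 1 \leq p$, and $p$ otherwise, because a run of at least $p$ consecutive multiples of $g$ covers all of $\langle g\rangle$. Hence $|hA| = \min(p, hm-h+1)$, and equality holds.

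\textbf{Strict inequality when $m > p$.} For $h \geq 1$, any $a \in A$ satisfies $(h-1)a + A \subseteq hA$, so $|hA| \geq |A| = m$. Also $hm - h + 1 \geq m$, so $\min(p, hm-h+1) = p < m \leq |hA|$ for every $A$ with $|A| = m$. Thus $\rho(G,m,h) > \min(p, hm-h+1)$. The hypothesis $n \geq m$ only guarantees that such sets $A$ exist, so $\rho$ is well defined.

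The main work is in the lower bound, and it is handled entirely by Theorem \ref{devos-extension}. The only point needing care is the saturation case of the induction, where $|(h-1)A|$ has already reached $p$. The characterization of equality then follows directly from the construction and the estimate $|hA| \geq |A|$.
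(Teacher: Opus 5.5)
Your proof is correct and follows essentially the same route the paper takes for its generalization, Lemma \ref{group-h-fold-lem:1}: the lower bound comes from iterating Theorem \ref{devos-extension}, equality for $m \leq p(G)$ from an $m$-term progression in a subgroup of order $p(G)$, and strictness for $m > p(G)$ from $|hA| \geq |A| = m > p(G)$. Your formula $hA = [0, h(m-1)] \ast g$ is the right one; the paper writes $[0, hm-h+1]$, which is off by one.
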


The authors proved the following result.
\begin{theorem}[\cite{mn-tf2026}, Lemma $5.5$ and Lemma $5.6$]\label{kemeperman-inv-abelian-ext-thm}
	Let $h \geq 2$ be an integer, and let $A_1, \ldots, A_h$ be nonempty subsets of $G$ with $|A_i| \geq 2$ for each $i \in [1, h]$. Let
	\[A_1 + \cdots + A_h = \{a_1 + \cdots + a_h : a_i \in A_i ~\text{for each}~ i \in [1, h]\},\]
	and let
	\begin{equation*}
		|A_1 + \cdots + A_h| <
		\begin{cases}
			p(G) - 1, & ~\text{if}~ h = 2;\\
			p(G), & ~\text{otherwise.}~
		\end{cases}
	\end{equation*}
	Then
	\[|A_1 + \cdots + A_h| = |A_1| + \cdots + |A_h| - h + 1,\]
	if and only if $A_1, \ldots, A_h$ are arithmetic progressions with the same common difference.
\end{theorem}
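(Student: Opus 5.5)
The plan is to prove the ``if'' direction directly, and to prove the ``only if'' direction by induction on $h$. The base case $h = 2$ is the Vosper--Kemperman inverse theorem in arbitrary abelian groups. The inductive step runs the induction hypothesis on two different $(h-1)$-element subfamilies of $A_1, \ldots, A_h$ and glues the resulting common differences together through a shared set.

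\emph{The ``if'' direction.} Suppose $A_i = \{a_i + jd : j \in [0, k_i - 1]\}$ for each $i$. Then
\[
A_1 + \cdots + A_h = \Bigl\{\textstyle\sum_i a_i + jd : j \in [0, \sum_i k_i - h]\Bigr\}.
\]
This is a progression of $\sum_i k_i - h + 1$ terms. Its terms are pairwise distinct provided $\sum_i k_i - h + 1 \le \operatorname{ord}(d)$. Since $\operatorname{ord}(d) \ge p(G)$, it suffices to have $\sum_i k_i - h + 1 \le p(G)$. This follows from the hypothesis: if it failed, the sumset would be the whole of $\langle d\rangle$ translated, of size $\operatorname{ord}(d) \ge p(G)$, contradicting $|A_1 + \cdots + A_h| < p(G)$.

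\emph{Base case $h = 2$ of the ``only if'' direction.} Here $|A_1 + A_2| = |A_1| + |A_2| - 1 \le p(G) - 2$ with $|A_1|, |A_2| \ge 2$. I would invoke Kemperman's structure theorem, or a Vosper-type theorem for arbitrary abelian groups via DeVos-style arguments. The quasi-periodic alternatives in that theorem involve a nontrivial subgroup, which has order at least $p(G)$. They are therefore excluded because the sumset is too small. The remaining possibilities are $\min(|A_1|, |A_2|) = 1$, which is excluded by hypothesis, or the Vosper case of arithmetic progressions with a common difference. I expect this step to be the main obstacle: one has to check carefully that the threshold $p(G) - 1$ rules out the degenerate ``complement-type'' critical pairs, such as $|A_1 + A_2| = p(G) - 1$ inside a subgroup of order $p(G)$.

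\emph{Inductive step, $h \ge 3$.} Let $S = A_1 + \cdots + A_{h-1}$. By Theorem \ref{devos-extension} applied repeatedly,
\[
|S| \ge \min\Bigl(p(G), \sum_{i < h} |A_i| - h + 2\Bigr) \quad\text{and}\quad |S + A_h| \ge \min\bigl(p(G), |S| + |A_h| - 1\bigr).
\]
Since $|S + A_h| < p(G)$ and equality holds overall, both inequalities must be equalities without hitting $p(G)$. Hence $|S| = \sum_{i<h} |A_i| - (h - 1) + 1$.

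We also need $S$ to satisfy the size hypothesis at level $h - 1$. Since $|A_h| \ge 2$, we get $|S| = |S + A_h| - |A_h| + 1 \le |S + A_h| - 1 < p(G) - 1$. This meets the hypothesis even when $h - 1 = 2$.

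By induction, $A_1, \ldots, A_{h-1}$ are arithmetic progressions with a common difference $d$. Symmetrically, applying the same reasoning to the subfamily $A_2, \ldots, A_h$ shows that $A_2, \ldots, A_h$ are arithmetic progressions with a common difference $d'$.

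\emph{Gluing the two differences.} The set $A_2$ is an arithmetic progression with both differences $d$ and $d'$. Because every other $A_i$ has size at least $2$, we get $|A_2| \le |A_1 + \cdots + A_h| - (h - 1) \le p(G) - 3$. For a progression $P$ with $2 \le |P| \le \operatorname{ord}(d) - 2$, the difference is determined up to sign. To see this, note that $|P + \{0, d'\}| = |P| + 1$ forces $P + \{0, d'\}$ to be a progression with difference $d$. A standard counting of the ``gaps'' of $P$ in the cycle $\langle d\rangle$ then gives $d' = \pm d$. Replacing $d'$ by $-d'$ where needed, which only reverses the indexing of a progression, all of $A_1, \ldots, A_h$ are arithmetic progressions with the same common difference $d$. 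This completes the induction.
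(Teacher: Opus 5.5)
The paper does not prove this statement. It is imported from the authors' companion work (Lemmas 5.5 and 5.6 there) and used only as a black box, e.g.\ in Theorem~\ref{ss-nasym-group-thm-inv:6}, so there is no in-paper argument to compare with. Judged on its own, your proof is correct.

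\begin{itemize}
\item The ``if'' direction is right.
\item The reduction in the inductive step is right. Applying DeVos' bound twice forces $|A_1+\cdots+A_{h-1}| = \sum_{i<h}|A_i|-h+2$ and $|A_1+\cdots+A_{h-1}|\le |A_1+\cdots+A_h|-1<p(G)-1$.
\item Running the induction on the two overlapping subfamilies $A_1,\dots,A_{h-1}$ and $A_2,\dots,A_h$ is a good idea. It never applies the $h=2$ inverse theorem to the pair $(A_1+\cdots+A_{h-1},A_h)$, whose sumset may have size exactly $p(G)-1$, where the complement-type pairs live. This is exactly why the threshold can relax from $p(G)-1$ to $p(G)$ once $h\ge 3$.
\end{itemize}

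Two steps should be written out rather than gestured at.

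\emph{Base case.} The check you call the main obstacle is immediate.
\begin{itemize}
\item $A_1+A_2$ is aperiodic, because a nontrivial period $H$ would give $|A_1+A_2|\ge|H|\ge p(G)$. So Kemperman's theorem applies.
\item A quasi-periodic decomposition with nonempty periodic part puts a full $H$-coset inside $A_1+A_2$, so it is excluded for the same reason.
\item The elementary pairs of types III and IV have $|A_1+A_2|=|H|$ and $|H|-1$ respectively. Both are at least $p(G)-1$, so they are excluded by the strict threshold.
\end{itemize}

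\emph{Gluing step.} The sentence ``$|P+\{0,d'\}|=|P|+1$ forces $P+\{0,d'\}$ to be a progression with difference $d$'' is not a valid inference as written, since it already presupposes $d'=\pm d$. Argue instead as follows, with $k=|P|$.
\begin{itemize}
\item We have $d'\in P-P\subseteq\langle d\rangle$ and $d\in P-P\subseteq\langle d'\rangle$. So $\langle d\rangle=\langle d'\rangle$, and its order $n$ (possibly infinite) satisfies $n\ge k+3$, using your bound $k\le p(G)-3$.
\item Identify the coset containing $P$ with $\mathbb{Z}_n$ (or $\mathbb{Z}$), so that $P=[0,k-1]$ and $d'$ corresponds to some $u$.
\item Since $P$ is a progression with difference $u$ and $k+1\le n$, the union $P\cup(P+u)$ is a $(k+1)$-term progression. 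Hence $|P\cap(P+u)|=k-1$.
\item Compute this intersection directly. It has size $k-|j|$ if $u$ has exactly one representative $j\in[-(k-1),k-1]$. It has size $2k-n\le k-3$ if $u$ has two such representatives, and size $0$ if it has none.
\item Therefore $u=\pm1$, that is, $d'=\pm d$, and the induction closes as you describe.
\end{itemize}
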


Bhanja and Pandey proved the following results for the signed sumset in the additive group of integers $\mathbb{Z}$.

\begin{theorem}[{\cite[Theorem $2.1$ and Theorem $2.2$]{bhanja-pandey2019}}] \label{bhanja-pandey-pos-inv1}
	Let $A$ be a finite set of $k \ge 3$ positive integers. Then
	\begin{equation}\label{bhanja-pandey-pos-inv1-eq1}
		|2_{\pm} A| \geq 4k - 2.
	\end{equation}
	Moreover, if equality holds in \eqref{bhanja-pandey-pos-inv1-eq1}, then $A = d \ast \{1, 3, \ldots, \ldots, 2k - 1\}$ for some positive integer $d$.
\end{theorem}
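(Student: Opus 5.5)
The plan is to split $2_{\pm}A$ according to sign patterns and compare it with the ordinary sumset $2A$, using Theorem \ref{hfold-direct-thm} for both the bound and the equality case. Write $A=\{a_1<\cdots<a_k\}$ with $a_1>0$. A coefficient vector with $\sum|\lambda_i|=2$ either has a single nonzero entry $\lambda_i=\pm 2$, or two nonzero entries $\pm1$ at distinct indices. Hence
\[
2_{\pm}A = 2A \cup (-2A) \cup \{a_i-a_j : i\neq j\}.
\]
Note that $0\notin 2_{\pm}A$, because a difference only arises from two distinct indices.

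\textbf{The bound.} Since all elements of $A$ are positive, $2A$ consists of positive integers and $-2A$ of negative integers, so these two sets are disjoint. By Theorem \ref{hfold-direct-thm}, $|2A|=|-2A|\geq 2k-1$. Therefore
\[
|2_{\pm}A|\geq |2A|+|-2A|\geq 4k-2,
\]
which is \eqref{bhanja-pandey-pos-inv1-eq1}.

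\textbf{The equality case.} Suppose $|2_{\pm}A|=4k-2$. Then both inequalities above are equalities, which gives two facts.
\begin{enumerate}
\item $|2A|=2k-1$, so by the inverse part of Theorem \ref{hfold-direct-thm} (here $k\geq 2$) the set $A$ is an arithmetic progression $\{a+id : i\in[0,k-1]\}$ with $a>0$ and $d>0$.
\item $2_{\pm}A=2A\cup(-2A)$, so every nonzero difference $a_i-a_j$ already lies in $2A\cup(-2A)$.
\end{enumerate}
Apply the second fact to the positive difference $d=a_2-a_1$: it must belong to $2A=\{2a+id : i\in[0,2k-2]\}$. So $d=2a+id$ for some $i\geq 0$. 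If $i\geq1$, then $d\geq 2a+d>d$, which is impossible. Hence $i=0$ and $d=2a$. This gives
\[
A=\{a,3a,\ldots,(2k-1)a\}=a\ast\{1,3,\ldots,2k-1\},
\]
which is the claimed structure with $a$ in the role of the common factor.

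\textbf{Main obstacle.} The only delicate point is noticing that $0$ is excluded from $2_{\pm}A$. If $0$ were included, the disjoint union $2A\cup(-2A)\cup\{0\}$ would give the larger bound $4k-1$, contradicting the stated bound. Once this is observed, the argument is a direct application of Theorem \ref{hfold-direct-thm}.
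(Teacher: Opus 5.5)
Your proof is correct and complete. The paper gives no proof of this statement to compare against: it imports the result from Bhanja--Pandey \cite{bhanja-pandey2019} and uses it as a black box, for instance in Corollary~\ref{ss-sumset-int-app:3}.

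Your argument derives the result directly from Theorem~\ref{hfold-direct-thm}. For the bound, you split $2_{\pm}A = 2A \cup (-2A) \cup \{a_i - a_j : i \neq j\}$ and note that $2A$ and $-2A$ are disjoint by sign, each of size at least $2k-1$. In the equality case, $|2A| = 2k-1$, so $A$ is an arithmetic progression with first term $a$ and common difference $d$. Moreover $2_{\pm}A = 2A \cup (-2A)$, so the positive difference $d$ lies in $2A$, whose least element is $2a$. This forces $d = 2a$. The approach matches how the paper itself reduces signed sumsets to ordinary sumsets; compare the proof of Corollary~\ref{ss-sumset-int-app:2}, where the arithmetic-progression structure of $A_{abs} \cup (-A_{abs})$ likewise forces $d = 2a$.

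Two minor remarks.
\begin{enumerate}
\item Your argument never uses $k \geq 3$. The case $k = 2$ is covered by the inverse part of Theorem~\ref{hfold-direct-thm}, and $k = 1$ is trivial.
\item In your closing remark, a bound of $4k - 1$ would not contradict the inequality \eqref{bhanja-pandey-pos-inv1-eq1} itself. It would only contradict its sharpness, which is witnessed by $\{1, 3, \ldots, 2k-1\}$.
\end{enumerate}
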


\begin{theorem}[{\cite[Theorem $2.3$]{bhanja-pandey2019}}]\label{bhanja-pandey-pos-dir}
	Let $h \ge 3$ be a positive integer, and let $A$ be a finite set of $k \geq 3$ positive integers. Then
	\[|h_{\pm} A| \ge 2hk - h + 1.\]
	This lower bound is best possible.
\end{theorem}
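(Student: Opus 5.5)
The statement to be proved is Theorem \ref{bhanja-pandey-pos-dir}: for a finite set $A$ of $k \geq 3$ positive integers and $h \geq 3$, we have $|h_{\pm}A| \geq 2hk - h + 1$, and this bound is best possible. The plan is to reduce to ordinary $h$-fold sums of a symmetric-type set and to exhibit many distinct elements explicitly by an ordering argument, as in the proof of Theorem \ref{hfold-direct-thm}.

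Write $A = \{a_1 < a_2 < \cdots < a_k\}$ with $a_1 > 0$, and put $B = A \cup (-A)$. Because $A$ consists of positive integers, $B$ has $2k$ distinct elements,
\[
-a_k < \cdots < -a_1 < a_1 < \cdots < a_k .
\]
Every element of $h_{\pm}A$ has the form $\sum \epsilon_i b_i$, where all summands attached to the same $a_j$ carry the same sign. We therefore cannot simply invoke Theorem \ref{hfold-direct-thm} for $hB$, since $hB$ also allows $a_j + (-a_j)$ and is generally larger. Instead we build a strictly increasing chain of elements of $h_{\pm}A$.

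The chain starts at $-ha_k$. For each $j$ from $k$ down to $2$, we replace one copy of $-a_j$ by $-a_{j-1}$, one at a time. This moves from $-ha_j$ to $-ha_{j-1}$ through $h$ strictly increasing steps, and every intermediate sum uses only the negative signs of $a_j$ and $a_{j-1}$. Across $j$ this gives $h(k-1)+1$ elements ending at $-ha_1$.

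The chain then has to pass from $-ha_1$ to $ha_1$. Writing $x a_1$ for a coefficient $x$ on $a_1$, the multiples $-ha_1, -(h-2)a_1, \ldots, ha_1$ all lie in $h_{\pm}A$, but they only give steps of $2a_1$. To fill the gaps we use $a_2$. Sums of the form $-(h-1)a_1 + a_2$, or $-(h-t)a_1 + t a_2$, are admissible provided the signs are consistent, i.e. $a_1$ is taken only negatively and $a_2$ only positively. Comparing these with the multiples of $a_1$, we aim to produce $h$ new elements interleaved between $-ha_1$ and $ha_1$, beyond the $h+1$ multiples. The natural interleaving is $-(h-1)a_1 + a_2$, $-(h-2)a_1 + a_2$, and so on, but these need not lie strictly between consecutive multiples. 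So we should argue instead with a second chain: go from $-ha_1$ up to $ha_2$ via $-(h-t)a_1 + t a_2$, which is strictly increasing in $t$, and compare it with the chain through $ha_1$. The count in the middle has to contribute $2h$ new elements up to $ha_2$.

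Finally we ascend symmetrically from $ha_2$ to $ha_k$, contributing $h(k-2)$ further elements. The total is
\[
h(k-1) + 1 + 2h + h(k-2) = 2hk - h + 1 .
\]

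The main obstacle is the middle segment, namely finding $2h$ distinct elements strictly between $-ha_1$ and $ha_2$ that are valid signed sums. We would use the two increasing chains
\[
-(h-t)a_1 + t a_2 \quad \text{and} \quad -(h-t)a_1 + t a_1 \ \text{followed by } (h-t)a_1 + t a_2 ,
\]
and argue by comparing each element of one chain with an interlacing element of the other, splitting into cases according to whether $a_2 < 2a_1$. This is where $h \geq 3$ and $k \geq 3$ should be needed, for instance using $a_3$ when the two chains collide.

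For sharpness we take $A = \{1, 3, \ldots, 2k-1\}$. Every element of $h_{\pm}A$ is then an integer of the same parity as $h$ in $[-h(2k-1), h(2k-1)]$. There are exactly $h(2k-1)+1 = 2hk - h + 1$ such integers, so the bound is attained.
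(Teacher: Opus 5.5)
There is a genuine gap at the step you yourself flag as the main obstacle. The middle step is not only unproved; as you set it up, it is false. The paper gives no proof of this statement (it is quoted from Bhanja--Pandey), so the question is only what a correct argument needs. Your outer chains $-(h-t)a_j-ta_{j-1}$ and $(h-t)a_j+ta_{j+1}$ are valid signed sums and strictly monotone. The sharpness example $\{1,3,\ldots,2k-1\}$ is also correct, given the lower bound.

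The first problem is your claim that $-(h-2)a_1,\ldots,(h-2)a_1$ lie in $h_{\pm}A$, together with your chain $-(h-t)a_1+ta_1$. These use $a_1$ with both signs. In $h_{\pm}A$ each $a_i$ gets a single coefficient $\lambda_i$, so a pure multiple $\lambda_1a_1$ requires $|\lambda_1|=h$. This is exactly the objection you raised against $hB$.

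The more serious problem is your bookkeeping $h(k-1)+1+2h+h(k-2)$. It forces $2h$ elements of $h_{\pm}A$ in $(-ha_1,ha_2]$, and that is false. Take $h=3$ and $A=\{1,2,a_3\}$ with $a_3\ge 11$. Any signed sum with $\lambda_3\neq0$ has absolute value at least $a_3-4>6$. Hence $3_{\pm}A\cap(-3,6]=3_{\pm}\{1,2\}\cap(-3,6]=\{0,3,4,5,6\}$, which has only $5<2h$ elements. In particular $-1=-(h-2)a_1\notin 3_{\pm}A$. No case analysis ``using $a_3$ when the chains collide'' can repair this. Your three pieces give $7+5+3=15$ here, below the required $16$.

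The missing idea is that your outer counts $h(k-1)+1$ and $h(k-2)$ are not sharp, and a correct proof must be able to collect extra elements there. In the example, $a_3-2a_2=a_3-4$ and the sums $a_3\pm a_1\pm a_2$ lie in $(3a_2,3a_3]$ but are not on your chain.

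One workable organization is induction on $k$. Since $h_{\pm}(A\setminus\{a_k\})\subseteq[-ha_{k-1},ha_{k-1}]$, and the $2h$ sums $\pm\bigl((h-t)a_{k-1}+ta_k\bigr)$ for $1\le t\le h$ lie outside this interval, you get $|h_{\pm}A|\ge|h_{\pm}(A\setminus\{a_k\})|+2h$. This reduces everything to proving $|h_{\pm}\{a_1,a_2,a_3\}|\ge 5h+1$. That base case genuinely needs the third element and a case analysis not tied to a fixed window. The two-element analogue fails: $3_{\pm}\{1,2\}=\{0,\pm3,\pm4,\pm5,\pm6\}$ has $9<3h+1$ elements, which is precisely why $k\ge3$ is assumed.
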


\begin{theorem}[\cite{bhanja-pandey2019}, Theorem $2.4$]\label{bhanja-pandey-pos-inv}
	Let $h \ge 3$ be a positive integer, and let $A$ be a finite set of $k \ge 3$ positive integers. If
	\[|h_{\pm} A| = 2hk - h + 1,\]
	then $A = d \ast \{1, 3, \ldots, \ldots, 2k - 1\}$ for some positive integer $d$.
\end{theorem}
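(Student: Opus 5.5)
The plan is to prove Theorem \ref{bhanja-pandey-pos-inv} by induction on $k$. The base case $k = 3$ is handled directly, and the inductive step compares $h_{\pm}A$ with $h_{\pm}A'$, where $A' = A \setminus \{a_k\}$. Write $A = \{a_1 < a_2 < \cdots < a_k\}$ with all $a_i > 0$. The set $h_{\pm}A$ is symmetric about $0$, and every element $x$ of it satisfies $x \equiv \sum \lambda_i a_i \equiv \sum |\lambda_i| a_i \pmod 2$ up to a change of signs. The useful description is $h_{\pm}A = \bigcup_{j=0}^{h} \bigl( jA - (h-j)A \bigr)$.

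First I would prove a lower bound with an explicit chain of elements, following the argument behind Theorem \ref{bhanja-pandey-pos-dir}. The extreme elements are $\pm h a_k$. Passing from $A'$ to $A$, the new elements $h a_k > (h-1)a_k + a_{k-1} > \cdots$ and $(h-1)a_k - a_1$, together with their negatives, lie outside $[-h a_{k-1}, h a_{k-1}]$. Counting them carefully gives $|h_{\pm}A| \geq |h_{\pm}A'| + 2h$. Combined with a base count this yields the bound $2hk - h + 1$. Equality in the theorem then forces equality at every stage. That is, $h_{\pm}A'$ must itself be extremal, so by induction $A' = d \ast \{1,3,\ldots,2k-3\}$. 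It also forces the $2h$ new elements to be the only elements of $h_{\pm}A$ outside $h_{\pm}A'$.

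The key step is to pin down $a_k$ given $A' = d\ast\{1,\ldots,2k-3\}$. I would look at the top of the set. The elements $h a_k$, $(h-1)a_k + a_{k-1}$, $(h-1)a_k - a_{k-1}$, $(h-2)a_k + 2a_{k-1}$, and so on, are all new elements exceeding $h a_{k-1}$ once $a_k$ is large. If $a_k > a_{k-1} + 2d$, more than $2h$ new elements appear: in addition to the chain, an extra element such as $(h-1)a_k + a_{k-2}$ or $(h-1)a_k - a_1$ sits strictly between two chain elements. This contradicts equality. So $a_k \le a_{k-1} + 2d = (2k-1)d$. I would also check that $a_k \equiv d \pmod{2d}$ is forced. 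If $a_k$ is not an odd multiple of $d$, then elements like $(h-1)a_k + a_1$ and $(h-1)a_k - a_1$ fall in new residue positions. They then cannot coincide with elements of $h_{\pm}A'$, all of which are multiples of $d$ with parity $\equiv hd \pmod{2d}$, so the count exceeds $2hk - h + 1$. Hence $a_k = (2k-1)d$.

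The base case $k = 3$ needs a direct analysis. Write $A = \{a < b < c\}$ and first use the case $h = 2$ from Theorem \ref{bhanja-pandey-pos-inv1} as a guide. For general $h \ge 3$, I would list the $2hk - h + 1 = 5h + 1$ forced extremal elements in increasing order. Equality of consecutive-gap relations then forces $b - a = c - b = 2a$, i.e. $A = a \ast \{1,3,5\}$.

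The main obstacle is the exact counting of new elements in the inductive step. One must show that every configuration other than $a_k = a_{k-1} + 2d$ produces at least one element outside $h_{\pm}A'$ beyond the $2h$ chain elements. This requires a case split on whether $a_k - a_{k-1}$ is less than, equal to, or greater than $2d$, and on whether it is a multiple of $d$ at all. In the case $a_k - a_{k-1} < 2d$, I expect to use that $(h-1)a_k + a_{k-1}$ and $(h-1)a_k + a_{k-2}$ both exceed $h a_{k-1}$ and differ by an amount not realizable within the chain.
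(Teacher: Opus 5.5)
The paper gives no proof of this statement; it quotes it from Bhanja--Pandey. So I can only assess your argument on its own terms, and it has a genuine gap: the case $k=3$ is never proved.

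\textbf{What works.} Your reduction is sound for $k\ge 4$. The elements $ja_k+(h-j)a_{k-1}$, $1\le j\le h$, form a chain $C$ lying above $\max h_{\pm}A'=ha_{k-1}$, so $|h_{\pm}A|\ge |h_{\pm}A'|+2h$. Since $k-1\ge 3$, the direct bound (Theorem~\ref{bhanja-pandey-pos-dir}) applies to $A'$, and equality then forces $|h_{\pm}A'|=2h(k-1)-h+1$ and $h_{\pm}A=h_{\pm}A'\sqcup C\sqcup(-C)$.

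\textbf{The gap.} This only reduces the theorem to $k=3$, and it cannot be pushed lower: for $k=2$ the bound itself fails, e.g.\ $|3_{\pm}\{1,2\}|=9<10$. The inverse statement for three-element sets is not argued. One must show that $|h_{\pm}\{a<b<c\}|=5h+1$ forces $b=3a$, $c=5a$ for every $h\ge 3$, and this is where the real content of the theorem lies. Your plan to ``list the $5h+1$ forced elements in increasing order and compare gaps'' presupposes one list valid for all $a<b<c$, and there is none. The obvious $4h+2$ elements are $\pm(jc+(h-j)b)$ for $0\le j\le h$ and $\pm(jb+(h-j)a)$ for $0\le j\le h-1$. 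The remaining $h-1$ elements lie in $(-ha,ha)$ for $\{1,3,5\}$. For $A=\{1,100,1000\}$ and $h=3$, however, no element of $3_{\pm}A$ lies in $(-3,3)$, and the extra elements come from elsewhere (e.g.\ $2b-a$). So you need a case analysis on the relative sizes of $a$, $b-a$, $c-b$, with an equality analysis in each regime. Using the $h=2$ result ``as a guide'' does not supply this.

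\textbf{A false identity.} Your ``useful description'' $h_{\pm}A=\bigcup_{j}(jA-(h-j)A)$ is wrong. The right side is $h(A\cup(-A))$, which allows $a_i$ and $-a_i$ to cancel. For $A=\{1,10,100\}$ we have $1=1+10-10\in 2A-A$, but $1\notin 3_{\pm}A$. Only $\subseteq$ holds when $A\cap(-A)=\varnothing$, which is exactly why Lemma~\ref{ss-nasym-group-lem:1} assumes $A\cap(-A)\neq\varnothing$.

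\textbf{Faulty witnesses in the inductive step.} First, $(h-1)a_k-a_1$ need not exceed $ha_{k-1}$: for $A=\{1,3,5,7\}$, $h=3$, it equals $13\in 3_{\pm}\{1,3,5\}$. Second, an element outside $h_{\pm}A'$ may still lie in the chain. Take $A'=\{1,3,5\}$, $a_4=6$, $h=8$. The chain is $\{41,\dots,48\}$, and it contains $(h-1)a_4\pm a_1=41,43$ and $(h-1)a_4+a_2=45$, even though $a_4$ is an even multiple of $d$. So your proposed case split does not work as stated.

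\textbf{A clean fix for the step.} Use $y=a_k+(h-2)a_{k-1}+a_{k-2}\in h_{\pm}A$. It is positive and smaller than the least chain element $a_k+(h-1)a_{k-1}$. Equality therefore forces $y\in h_{\pm}A'\subseteq\{jd: j\equiv h \pmod 2,\ |j|\le h(2k-3)\}$. This gives at once $a_k\le 2a_{k-1}-a_{k-2}=(2k-1)d$ and $a_k/d$ odd, hence $a_k=(2k-1)d$, with no case split. After this repair your argument proves ``the case $k=3$ implies the theorem,'' but the case $k=3$ itself remains unproved.
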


\begin{lemma}\label{ss-nasym-group-lem:1}
	Let $h$ be a nonnegative integer. Let $A$ be a subset of $G$ such that $A \cap (- A) \neq \varnothing$. Then 
	\[h_{\pm}A = h(A \cup (-A)).\] 
\end{lemma}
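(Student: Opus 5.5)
The plan is to prove the two inclusions $h_{\pm}A \subseteq h(A \cup (-A))$ and $h(A \cup (-A)) \subseteq h_{\pm}A$ separately, working directly from the definitions. The case $h = 0$ is trivial, since both sides equal $\{0\}$ by convention. So assume $h \geq 1$, write $A = \{a_1, \ldots, a_k\}$, and fix $b \in A \cap (-A)$; then $-b \in A$ as well. The hypothesis $A \cap (-A) \neq \varnothing$ will be needed only in the second inclusion, to handle cancellation.

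\textbf{First inclusion.} Take $x = \sum_i \lambda_i a_i$ with $\sum_i |\lambda_i| = h$. Rewrite $\lambda_i a_i$ as a sum of $|\lambda_i|$ copies of either $a_i$ or $-a_i$, according to the sign of $\lambda_i$. This expresses $x$ as a sum of exactly $h$ elements of $A \cup (-A)$, so $x \in h(A \cup (-A))$.

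\textbf{Second inclusion.} Take $x = c_1 + \cdots + c_h$ with each $c_j \in A \cup (-A)$, and write $c_j = \varepsilon_j a_{i_j}$ with $\varepsilon_j \in \{1, -1\}$. For each $i$, let $p_i$ be the number of $j$ with $a_{i_j} = a_i$ and $\varepsilon_j = 1$, and let $n_i$ be the number with $\varepsilon_j = -1$. When an element equals both $a_i$ and $-a_{i'}$, we fix one such representation in advance. Then $x = \sum_i (p_i - n_i) a_i$ and $\sum_i (p_i + n_i) = h$.

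The difficulty is that the natural coefficients $\mu_i = p_i - n_i$ satisfy $\sum_i |\mu_i| = h - 2t$, where $t = \sum_i \min(p_i, n_i) \geq 0$ counts the cancellations. Without the hypothesis, $x$ would therefore lie only in $(h - 2t)_{\pm}A$. This is where the hypothesis is used.

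\textbf{Compensating for cancellation.} Choose the representation $x = c_1 + \cdots + c_h$ so that no index appears with both signs, i.e.\ $t = 0$. This is possible because any pair $a_i + (-a_i)$ can be replaced by $b + (-b) = 0$. Here $b$ and $-b$ are both in $A$ (if $b = -b$, then $2b = 0$ and we use $b + b$). More cleanly, each pair $a_i + (-a_i)$ is replaced by $b + b'$, where $b'$ is $-b$ viewed as an element of $A$ with positive coefficient.

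After these replacements, the coefficient of each element of $A$ has a single sign. The only exception is the element $b$ itself, should $-b$ also be counted negatively. This is avoided by always writing $-b$ as $+(-b)$, using $-b \in A$, and $b$ as $+b$.

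\textbf{The main obstacle.} Careful bookkeeping is needed when $b = -b$, and when $b$ or $-b$ already appears in the original representation with a negative sign. In each such occurrence, I would convert $-b$ to $+(-b)$ and $-(-b)$ to $+b$. Every element of $\{b, -b\}$ then appears only with positive sign, while every other index keeps a single sign.

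The resulting integer coefficients $\lambda_i$ then satisfy $\sum_i |\lambda_i| = h$ exactly, since no cancellation remains. This gives $x \in h_{\pm}A$ and completes the second inclusion.
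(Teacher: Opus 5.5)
Your proof is correct and uses essentially the same idea as the paper: the length lost to cancelling pairs $a_i + (-a_i)$ is made up by terms $b$ and $-b$ with $b \in A \cap (-A)$, both taken with positive coefficient, and any existing negative occurrence of $b$ or $-b$ is flipped to a positive occurrence of the other. The paper instead first collapses $x$ to a reduced signed combination and then pads it, with zeros if $0 \in A$ and otherwise with equal numbers of $a$ and $-a$; your pairwise replacement does the same bookkeeping locally, so it avoids that case split and the parity check.
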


\begin{proof}
	If $h = 0$ or $h = 1$, then the proof of the lemma is trivial. Now we assume that $h \geq 2$. It is easy to verify that $h_{\pm}A \subseteq h(A \cup (-A))$. Hence it is enough to show that $h(A \cup (-A)) \subseteq h_{\pm}A$. Let $x = a_1 + a_2 + \cdots + a_h \in h(A \cup (-A))$, where $a_i \in A \cup (-A)$ for $i = 1, \ldots, h$. Then $x = s_1a_{i_1} + s_2a_{i_2} + \cdots + s_ja_{i_j}$ for some $a_{i_1}, a_{i_2}, \ldots, a_{i_j} \in A \setminus \{0\}$ and $\sum_{i = 1}^j |s_i| \leq h$ with $a_{i_r}^2 \neq a_{i_s}^2$ for $r \neq  s$. If $\sum_{i = 1}^j |s_i| = h$, then $x \in h_{\pm}A$. If $\sum_{i = 1}^j |s_i| < h$, then we consider the following cases:
	
	\noindent {\textbf{Case 1}}($0 \in A$). In this case, we can write
	\[x = (h - \sum_{i = 1}^j |s_i|).0 + s_1a_{i_1} + s_2a_{i_2} + \cdots + s_ja_{i_j},\]
	and so $x \in h_{\pm}A$. 
	
	\noindent {\textbf{Case 2}}($0 \not\in A$).
	In this case, we can write  
	\[x = \frac{(h - \sum_{i = 1}^j |s_i|)}{2}a + \frac{(h - \sum_{i = 1}^j |s_i|)}{2} (- a) + s_1a_{i_1} + s_2a_{i_2} + \cdots + s_ja_{i_j},\] 
	where $a$ is an element of $A\cap (- A)$, and $(h - \sum_{i = 1}^j |s_i|)$ is an even number. It is easy to see that if $a, - a \not\in \{a_{i_1}, \ldots, a_{i_j}\}$, then $x \in h_{\pm}A$. Now assume that
	\[a_{i_m} = a ~\text{or}~ a_{i_n} = - a\]
	for some $m, n \in [1, j]$. Without loss of generality, we assume that $a = a_{i_m}$. We observe the following:
	\begin{enumerate}
		\item If $s_m > 0$, then
		\[x = \Biggl(\frac{(h - \sum_{i = 1}^j |s_i|)}{2} + s_m \Biggl) a + \frac{(h - \sum_{i = 1}^j |s_i|)}{2} (- a) + \sum_{\substack{r = 1 \\ r \neq m}}^j s_1a_{i_r} \in h_{\pm}A.\]
		\item If $s_m < 0$, then
		\[x = \frac{(h - \sum_{i = 1}^j |s_i|)}{2} a + \Biggl(\frac{(h - \sum_{i = 1}^j |s_i|)}{2} + (- s_m) \Biggl) (- a) + \sum_{\substack{r = 1 \\ r \neq m}}^j s_1a_{i_r} \in h_{\pm}A.\]
	\end{enumerate}
	Thus in each case, $x \in h_{\pm}A$. Since $x$ is an arbitrary element of $h(A \cup (- A))$, it follows that $h(A \cup (- A)) \subseteq h_{\pm}A$. This completes the proof.
\end{proof}

The following corollary of above theorem generalizes Theorem $2.6$ in \cite{bhanja-pandey2019} for arbitrary finite subsets of integers.

\begin{corollary}\label{ss-sumset-int-app:1}
	Let $h \geq 3$ and $m \geq 3$ be integers. Let $A$ be a subset of $\mathbb{Z}$ with $m$ elements. Then 
	\begin{equation}\label{ss-sumset-int-app:1-eq:1}
		|h_{\pm}A| \geq 2hm - h|A \cap (- A)| - h + 1.
	\end{equation}
	The lower bound in \eqref{ss-sumset-int-app:1-eq:1} is best possible.
\end{corollary}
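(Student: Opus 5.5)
The plan is to split according to whether $s = |A \cap (-A)|$ is zero or positive, reducing the first case to Theorem \ref{bhanja-pandey-pos-dir} and the second to Lemma \ref{ss-nasym-group-lem:1} combined with Theorem \ref{hfold-direct-thm}. Extremal examples then show the bound is best possible.

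\emph{Case $s = 0$.} Here $0 \notin A$, since otherwise $0 \in A \cap (-A)$. The key observation is that $h_{\pm}A$ is unchanged if an element $a$ is replaced by $-a$, because the coefficient $\lambda_i$ ranges over a symmetric interval. So $h_{\pm}A = h_{\pm}A'$ with $A' = \{|a| : a \in A\}$. Since $A \cap (-A) = \varnothing$, the numbers $|a|$ are pairwise distinct. Hence $A'$ is a set of $m \geq 3$ positive integers, and Theorem \ref{bhanja-pandey-pos-dir} gives
\[
|h_{\pm}A| \geq 2hm - h + 1,
\]
which is the claimed bound when $s = 0$.

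\emph{Case $s \geq 1$.} By Lemma \ref{ss-nasym-group-lem:1}, $h_{\pm}A = hB$ with $B = A \cup (-A)$. We have $|B| = |A| + |-A| - |A \cap (-A)| = 2m - s$. Theorem \ref{hfold-direct-thm}, or the trivial bound when $|B| = 1$, then gives
\[
|h_{\pm}A| \geq h(2m - s) - h + 1 = 2hm - hs - h + 1.
\]

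\emph{Sharpness.} For $s = 0$, take $A = \{1, 3, \ldots, 2m - 1\}$; Theorem \ref{bhanja-pandey-pos-dir} asserts that this bound is attained. For $1 \leq s \leq m$, it suffices to choose $A$ so that $B$ is an arithmetic progression of length $2m - s$, since then $|hB| = h(2m - s) - h + 1$.
\begin{itemize}
\item If $s$ is odd, take
\[
A = \left[-\tfrac{s-1}{2}, \tfrac{s-1}{2}\right] \cup \left[\tfrac{s+1}{2}, \tfrac{2m-s-1}{2}\right].
\]
This set has $s + (m - s) = m$ elements, and $A \cap (-A)$ is exactly the central block of size $s$. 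Moreover $B = \left[-\tfrac{2m-s-1}{2}, \tfrac{2m-s-1}{2}\right]$ is an arithmetic progression.
\item If $s$ is even, take
\[
A = \{\pm 1, \pm 3, \ldots, \pm (s-1)\} \cup \{s+1, s+3, \ldots, 2m - s - 1\}.
\]
This set has $m$ elements and $|A \cap (-A)| = s$. Moreover $B$ is the set of odd integers in $[-(2m-s-1), 2m-s-1]$, an arithmetic progression of $2m - s$ terms with common difference $2$.
\end{itemize}

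The only step that needs care is the reduction $A \mapsto A'$ in the case $s = 0$: one must check that $h_{\pm}$ is invariant under sign changes of individual elements and that distinctness is preserved. The rest is a direct application of the earlier results.
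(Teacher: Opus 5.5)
Your proof is correct and follows the paper's argument: the case $A\cap(-A)=\varnothing$ goes through $h_{\pm}A=h_{\pm}A_{abs}$ and Theorem \ref{bhanja-pandey-pos-dir}, and the case $A\cap(-A)\neq\varnothing$ goes through Lemma \ref{ss-nasym-group-lem:1} and Theorem \ref{hfold-direct-thm}. The paper's proof of this corollary does not spell out sharpness, and your extremal sets are exactly the ones it constructs in the proof of Theorem \ref{ss-nasym-group-thm:5}; for $s=0$ you can justify attainment directly instead of citing Theorem \ref{bhanja-pandey-pos-dir}, by noting that $h_{\pm}A\subseteq h(A\cup(-A))$ and that $A\cup(-A)$ is then an arithmetic progression of $2m$ odd integers.
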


\begin{proof}
	If $A \cap (- A) = \varnothing$, then the corollary follows from the fact that $h_{\pm}A = h_{\pm}A_{abs}$ and Theorem \ref{bhanja-pandey-pos-dir}. If $A \cap (- A) \neq \varnothing$, then it follows from Lemma \ref{ss-nasym-group-lem:1} that
	\[h_{\pm}A = h(A \cup (- A)),\]
	and so it follows from Theorem \ref{hfold-direct-thm} that
	\[|h_{\pm}A| = |h(A \cup (- A))| \geq h |A \cup (- A)| - h + 1 = 2hm - h|A \cap (- A)| - h + 1.\]   	  
\end{proof}

\begin{corollary}\label{ss-sumset-int-app:2}
	Let $h \geq 3$ and $m \geq 3$ be integers. Let $A$ be a subset of $\mathbb{Z}$ with $m$ elements. If 
	\begin{equation}\label{ss-sumset-int-app:2-eq1}
		|h_{\pm}A| = 2hm - h|A \cap (- A)| - h + 1,
	\end{equation}
	then following holds:
	\begin{enumerate}
		\item $|A \cap (- A)|$ is an even, then $A_{abs} = d \ast \{1, 3, \ldots, \ldots, 2m - 1\}$ for some positive integer $d$.
		\item $|A \cap (- A)|$ is an odd, then $A_{abs} = d \ast [0, m - 1]$ for some positive integer $d$.
	\end{enumerate}
\end{corollary}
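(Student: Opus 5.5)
The plan is to split according to whether $A \cap (-A)$ is empty, and in each case reduce the equality \eqref{ss-sumset-int-app:2-eq1} to a known inverse theorem, exactly as Corollary \ref{ss-sumset-int-app:1} was obtained from the corresponding direct theorems. Throughout, write $s = |A \cap (-A)|$ and $B = A \cup (-A)$, so that $B = -B$ and $|B| = 2m - s$. Note that $A_{abs}$ is the set of absolute values of elements of $A$, which coincides with the set of absolute values of elements of $B$.

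\textbf{Case $s = 0$.} Here $0 \notin A$, since $0 \in A$ would give $0 \in A \cap (-A)$. So $A_{abs}$ is a set of $m \ge 3$ positive integers, and $h_{\pm}A = h_{\pm}A_{abs}$. The hypothesis then reads $|h_{\pm}A_{abs}| = 2hm - h + 1$ with $h \ge 3$. Theorem \ref{bhanja-pandey-pos-inv} gives $A_{abs} = d \ast \{1, 3, \ldots, 2m-1\}$, which is assertion (1) with $s = 0$.

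\textbf{Case $s \ge 1$.} By Lemma \ref{ss-nasym-group-lem:1}, $h_{\pm}A = hB$. The hypothesis becomes
\[|hB| = 2hm - hs - h + 1 = h|B| - h + 1.\]
Since $|B| \ge m \ge 3$, the inverse part of Theorem \ref{hfold-direct-thm} shows that $B$ is an arithmetic progression, say with common difference $e > 0$. A finite arithmetic progression $B$ with $B = -B$ must satisfy $\min B = -\max B$, so $B$ is symmetric about $0$. We now use the parity of $|B|$, which is the parity of $s$.

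If $s$ is odd, then $|B|$ is odd. The middle term of $B$ is $0$, so $B = e \ast [-n, n]$ with $2n + 1 = |B|$. Taking absolute values gives $A_{abs} = e \ast [0, n]$, an initial segment $d \ast [0, |A_{abs}| - 1]$ with $d = e$, which is assertion (2).

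If $s$ is even (and $s \ge 2$), then $|B| = 2n$ is even and $0 \notin B$. The two middle terms are $\pm e/2$, so $B = \tfrac{e}{2} \ast \{\pm 1, \pm 3, \ldots, \pm(2n-1)\}$. Here $e/2$ is a positive integer: $e/2 \in B \subseteq \mathbb{Z}$, or equivalently $B$ contains $\pm e/2$. Hence $A_{abs} = d \ast \{1, 3, \ldots, 2n-1\}$ with $d = e/2$, which is assertion (1).

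The main point requiring care is the bookkeeping of sizes. The set of absolute values has $n$ or $n+1$ elements, where $n$ is determined by $|B| = 2m - s$. So the progressions obtained are $d \ast \{1, 3, \ldots, 2|A_{abs}| - 1\}$ and $d \ast [0, |A_{abs}| - 1]$. I will therefore state the conclusion in terms of $|A_{abs}|$ and check that it agrees with the indexing used in the statement; this requires $|A_{abs}| = m$, i.e. $A_{abs}$ counted as in the definition used in \cite{bhanja-pandey2019}, and I will state this identification explicitly. The other point to verify is that $B$ has at least two elements, so that the inverse part of Theorem \ref{hfold-direct-thm} applies; this holds since $|B| \ge m \ge 3$. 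The rest is the routine observation that a symmetric arithmetic progression in $\mathbb{Z}$ is centered at $0$.
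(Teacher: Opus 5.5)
Your argument is the paper's argument. For $s=0$ you use $h_{\pm}A=h_{\pm}A_{abs}$ and Theorem~\ref{bhanja-pandey-pos-inv}. For $s\ge 1$ you use Lemma~\ref{ss-nasym-group-lem:1} and the inverse part of Theorem~\ref{hfold-direct-thm} to make $A\cup(-A)$ an arithmetic progression, then read off $A_{abs}$ from its symmetry about $0$. Your observation that the middle terms are $\pm e/2$ is the paper's step $d=a-(-a)=2a$.

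The one thing to change is your last paragraph. The size mismatch you noticed is a genuine error in the statement, and no choice of definition of $A_{abs}$ reconciles it.

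\begin{itemize}
\item With $A_{abs}=\{|a|:a\in A\}$ one has $|A_{abs}|=m-\lfloor s/2\rfloor$, so for $s\ge 2$ the printed conclusions are false. Reading $A_{abs}$ as a multiset does not help either.
\item Take $A=\{-1,1,3\}$. Then $|h_{\pm}A|=3h+1=2hm-hs-h+1$, but $A_{abs}=\{1,3\}$.
\item Take $A=\{-1,0,1\}$. Then $|h_{\pm}A|=2h+1=2hm-hs-h+1$, but $A_{abs}=\{0,1\}$.
\end{itemize}

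The paper's proof makes exactly this slip when it writes $A_{abs}=\{a,a+d,\ldots,a+(m-1)d\}$ and $A_{abs}=d\ast[0,m-1]$.

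What your argument correctly proves is the following:
\begin{itemize}
\item for even $s$, $A_{abs}=d\ast\{1,3,\ldots,2|A_{abs}|-1\}$;
\item for odd $s$, $A_{abs}=d\ast[0,|A_{abs}|-1]$;
\end{itemize}
in both cases with $|A_{abs}|=m-\lfloor s/2\rfloor$. This matches the stated form only when $s\in\{0,1\}$. State the conclusion this way rather than promising an identification $|A_{abs}|=m$ that cannot hold.
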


\begin{proof}
	First, assume that $|A \cap (- A)| = 0$. In this case, $A \cap (- A) = \varnothing$. Since $h_{\pm}A = h_{\pm}A_{abs}$, it follows \eqref{ss-sumset-int-app:2-eq1} that
	\[|h_{\pm}A_{abs}| = 2hm - h|A \cap (- A)| - h + 1.\] 
	Hence it follows from Theorem \ref{bhanja-pandey-pos-inv} that $A_{abs} = d \ast \{1, 3, \ldots, \ldots, 2m - 1\}$ for some positive integer $d$. Now we assume that $|A \cap (- A)| \geq 1$. In this case, $A \cap (- A) \neq \varnothing$. Since $A \cap (- A) \neq \varnothing$, it follows that $0 \in A$ if and only if $|A \cap (- A)|$ is odd. It follows from \eqref{ss-sumset-int-app:2-eq1} and Lemma \ref{ss-nasym-group-lem:1} that
	\[|h_{\pm}A| = |h (A \cup (- A))| = 2hm - h|A \cap (- A)| - h + 1,\]
	and so
	\[|h (A \cup (- A))| = h |(A \cup (- A)| - h + 1.\]
	Hence it follows from Theorem \ref{hfold-direct-thm} that $A \cup (- A)$ is an arithmetic progression. If $0 \in A$, then  since $A \cup (- A)$ is an arithmetic progression, it follows that $A_{abs} = d \ast [0, m - 1]$ for some positive integer $d$. If  $0 \not\in A$, then $|A \cap (- A)|$ is an even positive integer. It is easy to see that
	\[A \cup (- A) = A_{abs} \cup (- A_{abs}).\]
	Since $A \cup (- A)$ is an arithmetic progression with common difference $d$ (say), it follows that $A_{abs}$ is also an arithmetic progression with common difference $d$. Let 
	\begin{equation}\label{ss-sumset-int-app:2-eq2}
		A_{abs} = \{a, a + d, \ldots, a + (m - 1) d\}
	\end{equation}
	for some positive integer $a$. Then
	\[A \cup (- A) = A_{abs} \cup (- A_{abs}) = \{(- a - (m - 1) d), \ldots, - a - d, - a, a,  a + d, \ldots, a + (m - 1) d.\]
	Since $A \cup (- A)$ is an arithmetic progression with common difference $d$, it follows that
	\[d = a - (-a) = 2a.\]
	Therefore, it follows from \eqref{ss-sumset-int-app:2-eq2} that
	\[A_{abs} = a \ast \{1, 3, \ldots, (2m - 1)\}.\]
	This completes the proof.
\end{proof}

\begin{corollary}\label{ss-sumset-int-app:3}
	Let $A$ be a subset of $\mathbb{Z}$ with $m \geq 3$ elements. If 
	\begin{equation} \label{ss-sumset-int-app:3-eq1}
		|2_{\pm} A| \geq
		\begin{cases}
			4m - 2, & ~\text{if}~ A \cap (- A) = \varnothing;\\
			4m - 2|A \cap (- A)|- 1, & ~\text{if}~ A \cap (- A) \neq \varnothing;
		\end{cases}
	\end{equation}
	Furthermore, the following conclusions hold:
	\begin{enumerate}
		\item If $|A \cap (- A)|$ is even and if the equality holds in \eqref{ss-sumset-int-app:3-eq1}, then $A_{abs} = d \ast \{1, 3, \ldots, \ldots, 2m - 1\}$ for some positive integer $d$.
		\item If $|A \cap (- A)|$ is odd and if the equality holds in \eqref{ss-sumset-int-app:3-eq1}, then $A_{abs} = d \ast [0, m - 1]$ for some positive integer $d$.
	\end{enumerate}
\end{corollary}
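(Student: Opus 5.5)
The plan is to split according to whether $A \cap (-A)$ is empty, as in the proofs of Corollaries \ref{ss-sumset-int-app:1} and \ref{ss-sumset-int-app:2}, but with $h = 2$ and the case $h=2$ of the earlier results.

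\textbf{Case $A \cap (-A) = \varnothing$.} Here $0 \notin A$, and we replace each element by its absolute value. This gives $A_{abs}$, a set of $m$ positive integers. Since $\lambda a = (-\lambda)(-a)$, changing the sign of an element of $A$ does not change $2_{\pm}A$, so $2_{\pm}A = 2_{\pm}A_{abs}$. Also $|A_{abs}| = m$, because $A$ contains no pair $\{a,-a\}$. Theorem \ref{bhanja-pandey-pos-inv1} (with $k = m \geq 3$) then gives $|2_{\pm}A| \geq 4m-2$. In the equality case it gives $A_{abs} = d \ast \{1,3,\ldots,2m-1\}$. Since $|A \cap (-A)| = 0$ is even, this is exactly conclusion (1).

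\textbf{Case $A \cap (-A) \neq \varnothing$.} Lemma \ref{ss-nasym-group-lem:1} gives $2_{\pm}A = 2(A \cup (-A))$. Counting elements,
\[
|A \cup (-A)| = 2m - |A \cap (-A)|.
\]
Theorem \ref{hfold-direct-thm} with $h = 2$ then yields
\[
|2_{\pm}A| \geq 2(2m - |A\cap(-A)|) - 1 = 4m - 2|A \cap (-A)| - 1.
\]
When equality holds, the same theorem says $A \cup (-A)$ is an arithmetic progression with some common difference $d > 0$; note $|A \cup (-A)| \geq m \geq 3 \geq 2$, so the theorem applies. The set $A \cup (-A)$ is symmetric, and the elements of $A \cap (-A)$ other than $0$ come in pairs $\pm a$. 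Hence $0 \in A$ if and only if $|A \cap (-A)|$ is odd.

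\textbf{Odd subcase ($0 \in A$).} A symmetric arithmetic progression containing $0$ has the form $\{-jd,\ldots,jd\}$. The absolute values of its elements are $0, d, \ldots, jd$. These are exactly the elements of $A_{abs}$, and $|A_{abs}| = m$, so $A_{abs} = d \ast [0,m-1]$.

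\textbf{Even subcase ($0 \notin A$).} We have $A \cup (-A) = A_{abs} \cup (-A_{abs})$, and $A_{abs}$ is the positive half of this progression. So $A_{abs} = \{a, a+d, \ldots, a+(m-1)d\}$. The gap between $-a$ and $a$ must equal $d$, which forces $d = 2a$. Therefore $A_{abs} = a \ast \{1,3,\ldots,2m-1\}$.

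The only step needing care is this equality analysis: we must check that the multiset of absolute values of $A$ really equals the positive half (plus possibly $0$) of the progression $A \cup (-A)$, and that its size is $m$. The point is that $A$ and $A_{abs}$ have the same absolute values and $|A_{abs}| = m$. This is routine; everything else is a direct citation of the earlier results.
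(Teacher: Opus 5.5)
Your route is the paper's route. You handle the asymmetric case via $2_{\pm}A=2_{\pm}A_{abs}$ and Theorem \ref{bhanja-pandey-pos-inv1}, and the case $A\cap(-A)\neq\varnothing$ via Lemma \ref{ss-nasym-group-lem:1} and Theorem \ref{hfold-direct-thm}, exactly as in the proof of Corollary \ref{ss-sumset-int-app:2} to which the paper defers. The inequality, the deduction that $A\cup(-A)$ is an arithmetic progression, and the parity remark are all correct.

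The gap is the step you call routine, namely $|A_{abs}|=m$. It is false as soon as $s=|A\cap(-A)|\geq 2$. Every pair $\{a,-a\}\subseteq A$ with $a\neq 0$ contributes only one element $|a|$ to $A_{abs}=\{|a|:a\in A\}$, so $|A_{abs}|=m-\lfloor s/2\rfloor$. You yourself observe that such pairs exist, but then still count $m$ absolute values. Combined with $|A\cup(-A)|=2m-s$, your argument actually proves
\[
A_{abs}=d\ast\bigl[0,\,m-\tfrac{s+1}{2}\bigr]\quad\text{if $s$ is odd},\qquad A_{abs}=a\ast\{1,3,\ldots,2m-s-1\}\quad\text{if $s\geq 2$ is even}.
\]
This coincides with the stated conclusions only when $s\in\{0,1\}$.

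The gap cannot be closed, because the conclusions as stated are false. Two examples:

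\textbf{$A=\{-1,0,1\}$ ($m=3$, $s=3$).} Here $2_{\pm}A=[-2,2]$, so $|2_{\pm}A|=5=4m-2s-1$. But $A_{abs}=\{0,1\}$ has only two elements and is not of the form $d\ast[0,2]$.

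\textbf{$A=\{-1,1,3\}$ ($m=3$, $s=2$).} Here $2_{\pm}A=2\ast[-3,3]$, so $|2_{\pm}A|=7=4m-2s-1$. But $A_{abs}=\{1,3\}$ is not of the form $d\ast\{1,3,5\}$.

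The paper's proof has the identical flaw. The argument of Corollary \ref{ss-sumset-int-app:2} that it invokes writes $A_{abs}=d\ast[0,m-1]$ and $A_{abs}=\{a,a+d,\ldots,a+(m-1)d\}$ without accounting for the collapsed pairs. A correct version of the statement must do one of two things. Either restrict conclusion (1) to $A\cap(-A)=\varnothing$ and conclusion (2) to $|A\cap(-A)|=1$. Or replace $m$ by $|A_{abs}|=m-\lfloor s/2\rfloor$ in the conclusions, which gives exactly the display above.
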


\begin{proof}
	First assume that $|A \cap (- A)| = 0$. In this case, since $2_{\pm}A = 2_{\pm}A_{abs}$, the result follows from Theorem \ref{bhanja-pandey-pos-inv1}. Now assume that $|A \cap (- A)| \geq 1$. In this case, the proof is similar to that of the Corollary \ref{ss-sumset-int-app:2}.
\end{proof}

The following lemma follows immediately from Lemma \ref{ss-nasym-group-lem:1}.
\begin{lemma}\label{ss-nasym-group-lem:1x}
	Let $H$ be a nonempty finite set of nonnegative integers, and let $G$ be an additive abelian group. Let $A$ be a subset of $G$ such that $A \cap (- A) \neq \varnothing$. Then 
	\[H_{\pm}A = H(A \cup (-A)).\] 
\end{lemma}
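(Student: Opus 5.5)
The statement follows from Lemma \ref{ss-nasym-group-lem:1} applied to each $h \in H$, followed by taking the union over $H$.

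By Definition \ref{subsec-survey-signed-sumsets-def1}, we have
\[H_{\pm}A = \bigcup_{h \in H} h_{\pm}A \quad \text{and} \quad H(A \cup (-A)) = \bigcup_{h \in H} h(A \cup (-A)).\]
So it suffices to show termwise that $h_{\pm}A = h(A \cup (-A))$ for every $h \in H$.

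The hypothesis $A \cap (-A) \neq \varnothing$ does not depend on $h$. Hence for each nonnegative integer $h \in H$, Lemma \ref{ss-nasym-group-lem:1} gives exactly $h_{\pm}A = h(A \cup (-A))$. The case $h = 0$ is included in that lemma, and is also immediate from the convention $0_{\pm}A = 0(A \cup (-A)) = \{0\}$.

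Taking the union over the finite set $H$ of these equalities yields $H_{\pm}A = H(A\cup(-A))$.

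There is no real obstacle. The only point requiring care is the convention for $h = 0$, since $0 \in H$ is allowed; the definitions treat that case uniformly, so it causes no difficulty.
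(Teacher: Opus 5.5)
Your proposal is correct and follows the paper's own argument: the paper states that this lemma follows immediately from Lemma~\ref{ss-nasym-group-lem:1}, which amounts to applying that lemma to each $h \in H$ (with $h = 0$ covered by the convention $0_{\pm}A = 0(A \cup (-A)) = \{0\}$) and taking the union over $H$. Your handling of the $h = 0$ case is exactly the only point that needs a remark.
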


The following lemma expresses the signed sumset $[0, h]_{\pm}A$ as a $h$-fold signed sumset.
\begin{lemma}[{\cite[Proposition $3.3$]{bajnok2018}}]\label{ss-nasym-group-lem:1a}
	Let $h$ be positive integer. Let $A$ be nonempty subset of $G$ with $|A| \geq 2$. Then
	\[[0, h]_{\pm}A = h_{\pm} (A \cup \{0\}).\]
\end{lemma}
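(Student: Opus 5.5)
Both sides of the identity $[0,h]_{\pm}A = h_{\pm}(A\cup\{0\})$ are finite sets, so I would prove it by double inclusion, working directly with coefficient vectors. If $0\in A$ the claim essentially reduces to padding with zeros, so I assume $0\notin A$. Write $A=\{a_1,\ldots,a_k\}$ and $A\cup\{0\}=\{a_0=0,a_1,\ldots,a_k\}$.

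For $\supseteq$, take $x=\lambda_0\cdot 0+\sum_{i=1}^k\lambda_i a_i$ with $\sum_{i=0}^k|\lambda_i|=h$. Then $x=\sum_{i=1}^k\lambda_ia_i$ with $\sum_{i=1}^k|\lambda_i|=h-|\lambda_0|\in[0,h]$. Hence $x\in (h-|\lambda_0|)_{\pm}A\subseteq[0,h]_{\pm}A$; the case $h-|\lambda_0|=0$ gives $x=0\in 0_{\pm}A$ by convention.

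For $\subseteq$, let $x\in j_{\pm}A$ with $0\le j\le h$, say $x=\sum_{i=1}^k\lambda_ia_i$ with $\sum|\lambda_i|=j$. Setting $\lambda_0=h-j$ gives $x=\lambda_0\cdot 0+\sum_{i\ge1}\lambda_ia_i$, a representation in $h_{\pm}(A\cup\{0\})$ with total absolute weight $h$. The case $j=0$ (so $x=0$) is covered by $\lambda_0=h$.

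The only delicate point is bookkeeping: the definition of $h_{\pm}$ requires the listed elements to be distinct. When $0\in A$ the set $A\cup\{0\}=A$ and the element $0$ already appears among the $a_i$. In that case, for $\subseteq$ I would add $h-j$ to the existing coefficient of $0$ (keeping its sign, or choosing $+$ if it is zero), which raises the total weight to exactly $h$. For $\supseteq$, $h_{\pm}A\subseteq[0,h]_{\pm}A$ trivially. The hypothesis $|A|\ge2$ is not actually needed in this argument; it is presumably there only to match the source. I expect no real obstacle beyond this care about the $0\in A$ case and the convention $0_{\pm}A=\{0\}$.
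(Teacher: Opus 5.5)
Your double-inclusion argument is correct. In both directions the only move is shifting weight $h-j$ onto or off the coefficient of $0$, and you handle the cases $0\in A$ and $0\notin A$ and the convention $0_{\pm}A=\{0\}$ properly. The bound $|\lambda_i|\le j$ needed for membership in $j_{\pm}A$ is automatic from $\sum_{i\ge1}|\lambda_i|=j$.

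The paper gives no proof of its own and simply cites Bajnok's Proposition 3.3, so there is no different route to compare against. You are also right that $|A|\ge 2$ is unnecessary: for $A=\{a\}$ both sides equal $\{\lambda a:\lambda\in[-h,h]\}$.
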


The following lemma follows from \cite[Proposition $3.3$ and Proposition $3.4$]{bajnok2018}. But here we give a proof based on Lemma \ref{ss-nasym-group-lem:1a} and Lemma \ref{ss-nasym-group-lem:1}.
\begin{lemma}\label{ss-nasym-group-ss-nasym-group-lem:3}
	Let $h$ be positive integer. Let $A$ be nonempty subset of $G$ with $|A| \geq 2$. Then
	\[[0, h]_{\pm}A = h(A \cup (- A) \cup \{0\}).\]
\end{lemma}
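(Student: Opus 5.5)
The plan is to chain the two preceding lemmas. Set $B = A \cup \{0\}$.

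First, Lemma \ref{ss-nasym-group-lem:1a} converts the union of signed sumsets into a single one:
\[[0,h]_{\pm}A = h_{\pm}B.\]
This lemma only needs $|A| \geq 2$, which is assumed here.

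Next, since $0 \in B$ and $0 = -0$, we have $0 \in B \cap (-B)$, so $B \cap (-B) \neq \varnothing$. Lemma \ref{ss-nasym-group-lem:1} then applies to $B$ and gives
\[h_{\pm}B = h(B \cup (-B)).\]

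Finally, $B \cup (-B) = A \cup (-A) \cup \{0\}$, because $-\{0\} = \{0\}$. Combining the three displays yields
\[[0,h]_{\pm}A = h(A \cup (-A) \cup \{0\}),\]
which is the claim.

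The only step needing care is confirming that the hypotheses of both lemmas are met: $h$ positive and $|A| \geq 2$ for Lemma \ref{ss-nasym-group-lem:1a}, and a nonempty intersection $B \cap (-B)$ for Lemma \ref{ss-nasym-group-lem:1}. The second is the point of adjoining $0$, since it guarantees the intersection is nonempty regardless of whether $A$ itself meets $-A$.

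If one preferred not to rely on the cited Lemma \ref{ss-nasym-group-lem:1a}, the inclusion $h(A\cup(-A)\cup\{0\}) \subseteq [0,h]_{\pm}A$ can be checked directly. Write an $h$-fold sum with $j$ zero summands; collecting like terms gives an element of $h'_{\pm}A$ for some $h' \leq h - j$, with the drop below $h - j$ coming from cancellations $a + (-a)$. Such an element lies in $[0,h]_{\pm}A$. The reverse inclusion follows by padding a signed sum with zeros up to $h$ terms.

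I expect no real obstacle here: the argument is a direct substitution.
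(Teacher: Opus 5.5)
Your proposal is correct and matches the paper's proof exactly: first Lemma \ref{ss-nasym-group-lem:1a} gives $[0,h]_{\pm}A = h_{\pm}(A\cup\{0\})$, and then Lemma \ref{ss-nasym-group-lem:1} applies because $0 \in B\cap(-B)$. You also spell out two points the paper leaves implicit, namely that adjoining $0$ guarantees the nonempty intersection and that $B\cup(-B) = A\cup(-A)\cup\{0\}$, and your optional direct argument for both inclusions is also sound.
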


\begin{proof}
	It follows from Lemma \ref{ss-nasym-group-lem:1a} that 
	\[[0, h]_{\pm}A = h_{\pm} (A \cup \{0\}),\]
	and so it follows from Lemma \ref{ss-nasym-group-lem:1} that
	\[[0, h]_{\pm}A = h_{\pm} (A \cup \{0\}) = h(A \cup (- A) \cup \{0\}).\]
\end{proof}

The proof of the following lemma is contained in the proof of Theorem $3$ in \cite{bajnok-matzke2015}. Here, we give an alternate proof of this lemma.

\begin{lemma}\label{ss-nasym-group-lem:3}
	Let $A$ be nonempty finite subset of $G$ such that $|A| \geq 2$ and $A \cap (- A) \neq \varnothing$. If $\mathrm{sdeg}(A) \leq |A| - 2$, then there exits a set $B \subseteq G$ satisfying the following properties:
	\begin{enumerate}
		\item $|B| = |A|$,
		\item $\mathrm{sdeg}(B) \in \{|A| - 1, |A|\}$,
		\item $h_{\pm}B \subseteq h_{\pm}A$.
	\end{enumerate}	
\end{lemma}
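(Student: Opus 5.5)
Throughout, I read $\mathrm{sdeg}(A)$ as the symmetry degree $|A \cap (-A)|$. This is consistent with $\mathrm{Sym}(G,m)$ having degree $m$ and $\mathrm{Nsym}(G,m)$ having degree $m-1$.

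The plan is to build $B$ by "symmetrizing" the non-symmetric part of $A$ while staying inside $A \cup (-A)$, and then to compare signed sumsets via Lemma \ref{ss-nasym-group-lem:1}.

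\textbf{Decomposition of $A$.} Write $m = |A|$, $S = A \cap (-A)$ and $T = A \setminus S$, and put $s = |S| = \mathrm{sdeg}(A)$. Then $S = -S$ and $S \neq \varnothing$. The set $T$ satisfies $T \cap (-T) = \varnothing$, since an element $t \in T$ with $-t \in T \subseteq A$ would lie in $S$. Moreover $(-T) \cap S = \varnothing$: if $-t \in S$, then $t \in -S = S$, a contradiction. By hypothesis $|T| = m - s \geq 2$.

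\textbf{Construction of $B$.} Let $r = \lfloor (m-s)/2 \rfloor \geq 1$ and choose $T_1 \subseteq T$ with $|T_1| = r$. Define
\[B = S \cup T_1 \cup (-T_1) \cup E,\]
where $E = \varnothing$ if $m-s$ is even, and $E = \{t_0\}$ for some $t_0 \in T \setminus T_1$ if $m-s$ is odd.

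\textbf{Checking the three properties.}

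Property (1). The sets $S$, $T_1$, $-T_1$ and $E$ are pairwise disjoint, by the observations above ($T \cap (-T) = \varnothing$, $T \cap S = \varnothing$, $(-T) \cap S = \varnothing$). Hence $|B| = s + 2r + |E| = m$.

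Property (2). The symmetric part of $B$ is $B \cap (-B) = S \cup T_1 \cup (-T_1)$. The element $t_0$ is excluded because $-t_0 \notin S \cup T \cup (-T_1)$. Therefore
\[\mathrm{sdeg}(B) = s + 2r \in \{m-1, m\}.\]

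Property (3). Since $S \neq \varnothing$, both $A \cap (-A) \neq \varnothing$ and $B \cap (-B) \neq \varnothing$. Lemma \ref{ss-nasym-group-lem:1} then gives
\[h_{\pm}B = h(B \cup (-B)) \quad\text{and}\quad h_{\pm}A = h(A \cup (-A)).\]
By construction, $B \cup (-B) = S \cup T_1 \cup (-T_1) \cup E \cup (-E) \subseteq A \cup (-A)$. Monotonicity of $h$-fold sumsets then yields $h_{\pm}B \subseteq h_{\pm}A$.

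The only delicate point is the disjointness bookkeeping that gives $|B| = m$ and the exact value of $\mathrm{sdeg}(B)$. The substantive step is handled entirely by Lemma \ref{ss-nasym-group-lem:1}, so no genuine obstacle is expected.
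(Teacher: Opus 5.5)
Your proof is correct and is essentially the paper's argument: the paper performs the swaps $A \mapsto (A \cup \{-b\}) \setminus \{a\}$ (with $-a,-b \notin A$) one at a time, each raising $\mathrm{sdeg}$ by $2$, and your set $B = S \cup T_1 \cup (-T_1) \cup E$ is exactly what that iteration produces when done all at once. In both proofs the inclusion $h_{\pm}B \subseteq h_{\pm}A$ comes from Lemma \ref{ss-nasym-group-lem:1} together with $B \cup (-B) \subseteq A \cup (-A)$; your disjointness bookkeeping supplies checks that the paper leaves implicit.
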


\begin{proof}
	Since $A \cap (- A) \neq \varnothing$, it follows from Lemma \ref{ss-nasym-group-lem:1} that 
	\[h_{\pm}A = h(A \cup (- A)).\]
	Since $\mathrm{sdeg}(A) \leq |A| - 2$, there exit elements $a$, $b$ in $A$ such that $- a, - b \not\in A$. Let 
	\[A_1 = (A \cup \{- b\}) \setminus \{a\}.\]
	Then
	\[|A| = |A_1|,\]
	\[A_1 \cup (- A_1) \subseteq A \cup (- A),\]
	and
	\[\mathrm{sdeg}(A_1) = \mathrm{sdeg}(A) + 2.\]
	If $\mathrm{sdeg}(A_1) \in \{|A| - 1, |A|\}$, then we are done. Otherwise, we continue this process to get a sequence $A = A_0, A_1, \ldots, A_t, \ldots$ of subsets of $G$ with following properties:
	\begin{enumerate}
		\item $|A_i| = |A_0|$ for $i \geq 0$,
		\item $\mathrm{sdeg}(A_i) = \mathrm{sdeg}(A_{i - 1}) + 2$ for $i \geq 1$,
		\item $h_{\pm} A_{i} \subseteq h_{\pm} A_{i - 1}$ for each $i \geq 1$.
	\end{enumerate}
	Since $|A_0| \geq \mathrm{sdeg}(A_i) > \mathrm{sdeg}(A_{i - 1})$ and $|A_0|$ is finite, it follows that there exists a positive integer $t$ such that
	\[\mathrm{sdeg}(A_t) \in \{|A| - 1, |A|\}.\]
	This completes the proof.
\end{proof}

Using Theorem \ref{devos-extension} and the fact that  $|hA| \geq |A|$, we can prove the following generalization of Theorem \ref{plagne-group-h-fold-thm:1} in an additive group (not necessarily finite). 

\begin{lemma}\label{group-h-fold-lem:1}
	Let $G$ be an additive group. Let $m$ and $h$ be a positive integers. Then 
	\begin{equation}\label{group-h-fold-lem:1-eq1}
		\rho (G, m, h) \geq \min (p(G),hm - h + 1),
	\end{equation}
	with equality if and only if $m \leq p(G)$.	
\end{lemma}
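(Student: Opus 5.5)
The lower bound comes from iterating Theorem \ref{devos-extension} along the chain $A, 2A, \ldots, hA$. The equality condition is handled in two directions. If $m \le p(G)$, an arithmetic progression inside a cyclic subgroup gives the matching upper bound. If $m > p(G)$, the trivial bound $|hA| \geq |A| = m$ already exceeds the claimed minimum.

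\textbf{Lower bound.} Let $A \subseteq G$ with $|A| = m$. I prove by induction on $j \in [1,h]$ that
\[|jA| \geq \min(p(G), jm - j + 1).\]
The case $j = 1$ is trivial. For the inductive step, write $(j+1)A = jA + A$ and apply Theorem \ref{devos-extension}:
\[|(j+1)A| \geq \min(p(G), |jA| + m - 1).\]
If $|jA| \geq p(G)$, the bound is at least $\min(p(G), p(G) + m - 1) = p(G)$. Otherwise $|jA| \geq jm - j + 1$ and we obtain $\min(p(G), (j+1)m - (j+1) + 1)$. In both cases the claim holds for $j+1$. Taking $j = h$ and minimizing over $A$ gives \eqref{group-h-fold-lem:1-eq1}. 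If $p(G) = \infty$, the same argument works with the convention $\min(\infty, x) = x$.

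\textbf{Equality when $m \le p(G)$.} If $p(G) < \infty$, pick a subgroup of order $p(G)$. Since it has no nontrivial proper subgroup, it is cyclic of prime order, say $\langle g \rangle$ with $|\langle g\rangle| = p = p(G)$. If $p(G) = \infty$, then $G$ is torsion-free (and nontrivial, since it must contain $m$ elements), so pick any $g$ of infinite order. Set $A = [0, m-1] \ast g$. Since $m \le p(G)$, these elements are distinct, so $|A| = m$. Then
\[hA = [0, h(m-1)] \ast g,\]
which has exactly $\min(p(G), hm - h + 1)$ elements, because the multiples $0, g, \ldots, h(m-1)g$ are distinct up to $p$ terms and then exhaust $\langle g \rangle$. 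This matches the lower bound, so equality holds.

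\textbf{Failure of equality when $m > p(G)$.} Here $p(G)$ is finite. For any $a \in A$, the translate $a + (h-1)\cdot$ shift shows $|hA| \geq |A|$; explicitly, $(h-1)a + A \subseteq hA$. Hence
\[\rho(G,m,h) \geq m > p(G) \geq \min(p(G), hm - h + 1),\]
so equality fails. The only real subtlety in the whole argument is this converse direction together with the choice of a cyclic subgroup of order exactly $p(G)$ in the equality construction. Both are handled above: the first by the trivial bound $|hA| \geq |A|$, the second by observing that a minimal nontrivial subgroup is cyclic of prime order.
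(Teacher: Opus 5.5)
Your proposal is correct and follows essentially the same route as the paper. The lower bound comes from iterating DeVos's theorem, the ``only if'' direction from the trivial bound $|hA| \geq |A|$, and the ``if'' direction from the progression $[0, m-1] \ast g$ with $g$ of order $p(G)$. Your version is slightly more careful than the paper's: you treat $p(G) = \infty$ explicitly, and you give the correct range $hA = [0, h(m-1)] \ast g$, where the paper writes $[0, hm - h + 1]$.
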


\begin{proof}
	Let $A$ be a subset of $G$ with $|A| = m$. Then it follows from Theorem \ref{devos-extension} using induction that
	\[|hA| \geq \min (p(G), hm - h +1),\]
	and so
	\[\rho (G, m, h) \geq \min (p(G),hm - h + 1).\]
	This proves \eqref{group-h-fold-lem:1-eq1}. Now we prove second part. First assume that $\rho (G, m, h) = \min (p(G), hm - h + 1)$. It is easy to see that if $m \geq p(G)$, them $\min (p(G), hm - h + 1) = p(G)$. Since $|hA| \geq |A|$ for a nonempty finite set $A \subseteq G$ and $\rho (G, m, h) = \min (p(G), hm - h + 1)$, it follows that $m \leq p(G)$. 
	
	Now assume that $m \leq p(G)$.  Let $g$ be a nonzero element of $G$ such that the order of $g$ is $p(G)$. Let $H = \langle g \rangle$. Let
	\[A = [0, m - 1] \ast g.\]
	Then it is easy to see that
	\[hA = \{rg : r \in [0, hm - h + 1]\}.\]
	Hence
	\[|hA| = \min (p(G), hm - h + 1).\]
	It follows from \eqref{group-h-fold-lem:1-eq1} that
	\[\rho (G, m, h) = \min (p(G),hm - h + 1).\]
	This completes the proof.
\end{proof}

\section{Signed sumsets in groups and fields}\label{sec-signed-sum-group} 
In this section, we prove various results for the functions $\rho_{\pm}^{(s)} (G, m, h)$ and $\rho_{\pm}^{(s)} (G, m, [0, h])$ in an arbitrary abelian group $G$. First we prove some auxiliary results required for the proof of main results. 

\subsection{Signed sumset $H_{\pm} A$ in groups}\label{sec-ss-h-group-thm}

The following theorem generalizes Theorem \ref{bajnok-matzke-ss-min-group-thm} for $\rho_{\pm}(G, m, H)$.


\begin{proof}[Proof of Theorem \ref{ss-group-ext-bajnok-thm:1}]
	Let $A_0 \subseteq G$ such that $|A_0| = m$ and
	\[\rho_{\pm}(G, m, H) = |H_{\pm}A_0|.\]
	We show that there exist a set $B \in \mathcal{A}(G, m)$ such that 
	\[|H_{\pm}A_0| = |H_{\pm}B|.\]
	If $\mathrm{sdeg}(A_0) \in \{0, |A_0| - 1, |A_0|\}$, then we are done. Now assume that
	\[1 \leq \mathrm{sdeg}(A_0) \leq |A_0| - 2.\]
	Since $\mathrm{sdeg}(A_0) \leq |A_0| - 2$, it follows from Lemma \ref{ss-nasym-group-lem:3} that there exits $A_1\subseteq G$ such that
	\begin{enumerate}
		\item $|A_0| = |A_1|$.
		\item $\mathrm{sdeg}(A_1) \in \{|A_0| - 1, |A_0|\}$, and so $A_1 \in \text{Sym}(G, m) \cup \text{Nsym}(G, m)$.
		\item $h_{\pm}A_1 \subseteq h_{\pm}A_0$ for each $h \in H$. 
	\end{enumerate}
	Hence
	\[\bigcup_{h \in H}h_{\pm}A_1 \subseteq \bigcup_{h \in H}h_{\pm}A_0,\]
	and so
	\[H_{\pm}A_1 \subseteq H_{\pm}A_0.\]
	Therefore,
	\[\rho_{\pm}(G, m, H) = |H_{\pm}A_1|,\]
	and
	\[A_1 \in \text{Sym}(G, m) \cup \text{Nsym}(G, m).\]
	This completes the proof.
\end{proof}

Lemma \ref{ss-nasym-group-lem:1x} express $H$-fold signed sumset $H_{\pm}A$ as $H$-fold sumset if $A \cap (-A) \neq \varnothing$. The following theorem deals with the general case whether $A \cap (-A)$ is nonempty or not.
\begin{theorem}\label{ss-asym-group-thm:13}
	Let $H$ be a nonempty finite set of nonnegative integres, and let $A$ be a nonempty subset of an additive abelian group $G$. Then the following conclusions hold:
	\begin{enumerate}
		\item If there exists $g \in A$ such that $< g > \cap < A - g > = \{0\}$, then  
		\[|H_{\pm}A| \geq |H_{\pm}(A - g)|.\]
		\item If there exists $g \in A$ such that $< g > \cap < A \setminus \{g\}> = \{0\}$, then 
		\[|H_{\pm}A| \geq |H_{\pm}(A \cup \{0\} \setminus \{g\})|.\]
	\end{enumerate}
\end{theorem}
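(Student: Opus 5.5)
The plan is to build a surjection from $H_{\pm}A$ onto the target set. For this we use that the condition $\langle g\rangle \cap \langle B\rangle = \{0\}$ makes the sum $\langle g\rangle + \langle B\rangle$ direct. A direct sum has a well-defined projection onto the $\langle B\rangle$-coordinate, and we will show that this projection carries $H_{\pm}A$ onto the target set.

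\textbf{Part (1).} Write $A = \{g = a_1, a_2, \ldots, a_k\}$ and put $b_i = a_i - g$, so $A - g = \{0, b_2, \ldots, b_k\}$. Set $B = \langle A - g\rangle$. Since $\langle g\rangle \cap B = \{0\}$, the subgroup $\langle A \rangle \subseteq \langle g\rangle \oplus B$ has a projection homomorphism $\pi : \langle g\rangle \oplus B \to B$ with $\pi(g) = 0$ and $\pi(b) = b$ for all $b\in B$.

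Take $x = \sum_i \lambda_i a_i \in h_{\pm}A$ with $\sum_i|\lambda_i| = h$. Then
\[
\pi(x) = \sum_{i} \lambda_i \pi(g + b_i) = \sum_{i\ge 2} \lambda_i b_i + \lambda_1\cdot 0 .
\]
This is an element of $h_{\pm}(A-g)$: the element $a_1$ maps to $0 \in A - g$, and each $a_i$ maps to $b_i$, with the same coefficients $\lambda_i$, so $\sum_i|\lambda_i| = h$ is preserved. Hence $\pi(h_{\pm}A) \subseteq h_{\pm}(A-g)$.

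Conversely, every element $\sum_i \lambda_i b_i$ of $h_{\pm}(A-g)$ (with $b_1 = 0$) is the image under $\pi$ of $\sum_i \lambda_i a_i \in h_{\pm}A$. So $\pi$ maps $h_{\pm}A$ onto $h_{\pm}(A-g)$ for every $h \in H$. Taking unions, $\pi(H_{\pm}A) = H_{\pm}(A-g)$, which gives $|H_{\pm}A| \geq |H_{\pm}(A-g)|$.

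\textbf{Part (2).} This is identical with $B = \langle A\setminus\{g\}\rangle$ and $\pi$ the projection onto $B$ killing $g$. The map $a \mapsto \pi(a)$ sends $g \mapsto 0$ and fixes the other elements of $A$. It therefore gives a coefficient-preserving correspondence between representations over $A$ and representations over $A' = (A\setminus\{g\})\cup\{0\}$.

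\textbf{Main obstacle.} The one point needing care is the case where $0 \in A\setminus\{g\}$ already, or where $0 \in A - g$ arises from some $a_i = g$ only. Then $A'$ (or $A-g$) has one fewer element than $A$, and two indices map to $0$. This does not break the argument: a representation over $A$ with coefficients $\lambda_g$ and $\lambda_0$ becomes a representation over the smaller set with coefficient $\lambda_g + \lambda_0$ on $0$. Since $|\lambda_g+\lambda_0| \le |\lambda_g|+|\lambda_0|$, the total $\sum|\lambda|$ may drop below $h$. We will handle this by noting that $h_{\pm}$ of a set containing $0$ allows padding with copies of $0$: any term $\mu\cdot 0$ with $|\mu|$ chosen to restore $\sum|\lambda| = h$ contributes nothing to the sum. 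Thus $\pi(h_{\pm}A) \subseteq h_{\pm}A'$ still holds, and surjectivity holds as before by choosing $\lambda_g = 0$.

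We also need well-definedness of $\pi$ on all of $\langle A\rangle$ and not just on $\langle g\rangle + B$. This is immediate since $\langle A\rangle = \langle g\rangle + \langle A-g\rangle$ in case (1), and $\langle A\rangle = \langle g\rangle + \langle A\setminus\{g\}\rangle$ in case (2).
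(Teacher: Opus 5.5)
Your proof is correct. It runs in the opposite direction from the paper's.

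The paper builds an injection $H_{\pm}(A-g)\to H_{\pm}A$. It fixes a representation $\sum_i s_i b_i$ (with $b_i\in A-g$) of each element of $H_{\pm}(A-g)$ and lifts it to $\sum_i s_i(b_i+g)\in H_{\pm}A$. In part (2) it analogously moves the coefficient on $0$ onto $g$. The hypothesis is then used to show distinct elements have distinct lifts, since their difference would be $\bigl(\sum_i (u_i-s_i)\bigr)g\in\langle g\rangle\cap\langle A-g\rangle=\{0\}$.

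You use the same hypothesis differently: you form the projection $\pi$ of $\langle g\rangle\oplus\langle A-g\rangle$ onto its second summand and show $\pi(H_{\pm}A)=H_{\pm}(A-g)$. The paper's lift is exactly a section of your $\pi$, so the two arguments are dual. Your version has two advantages:
\begin{itemize}
\item it needs no choice of representatives, which the paper phrases awkwardly as ``unique representation'';
\item it gives the slightly more structural fact that $H_{\pm}(A-g)$ is a homomorphic image of $H_{\pm}A$.
\end{itemize}
The paper's version instead exhibits an explicit copy of $H_{\pm}(A-g)$ inside $H_{\pm}A$.

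One small correction to your ``main obstacle'' paragraph. In part (1) no collapse can occur: translation is injective, so $|A-g|=|A|$, and only $g$ itself maps to $0$. The collision issue arises only in part (2) when $0\in A\setminus\{g\}$. There your padding step handles it, taking a coefficient $\mu_0$ on $0$ with $|\mu_0|=|\lambda_g|+|\lambda_0|$.
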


\begin{proof}
	First, assume that $< g > \cap < A - g > = \{0\}$ for some $g \in A$. Let 
	\[A - g = \{a_1, a_2, \ldots, a_k\}.\] 
	Then 
	\[ H_{\pm}(A - g) = \biggl \{\sum_{i = 1}^k s_ia_i : s_i \in \mathbb{Z} ~\text{for each}~ i \in [1, k], \sum_{i = 1}^k |s_i| \in H \biggl\}.\] 
	Suppose we take the set $H_{\pm}(A - g)$, so that the representation of each element has unique. In other words, if $\sum_{i = 1}^k s_ia_i$ and $\sum_{i = 1}^k u_ia_i$ are elements of $H_{\pm}(A - g)$ such that
	\[\sum_{i = 1}^k s_ia_i = \sum_{i = 1}^k u_ia_i,\]
	then
	\[s_i = u_i\] 
	for $i = 1, 2, \ldots, k$. Now we consider the set 
	\[X = \bigg \{\sum_{i = 1}^k s_i(a_i + g) : \sum_{i = 1}^k s_ia_i \in H_{\pm}(A - g) \bigg\}.\] 
	It is easy to verify that
	\[X \subseteq H_{\pm}A.\] 
	Now we want to show that $|X| = |h_{\pm}(A - g)|$. Suppose that  $\sum_{j = 1}^k s_ia_i$ and $\sum_{i = 1}^k u_ia_i$ are two distinct elements of $H_{\pm}(A - g)$ such that 
	\[\sum_{i = 1}^k s_ia_i \neq \sum_{i = 1}^k u_ia_i\]
	and 
	\[\sum_{i = 1}^k s_i(a_i + g) = \sum_{i = 1}^k u_i(a_i + g).\] 
	In this case, we get
	\[\sum_{i = 1}^k s_ia_i - \sum_{i = 1}^k u_ia_i = \bigg (\sum_{i = 1}^k (u_i - s_i) \bigg)g.\]
	Hence
	\[<g> \cap <A - g> \neq \{0\}\]
	which is a contradiction since $< g > \cap <A - g > = \{0\}$. Therefore,
	\[|H_{\pm}A| \geq |X| = |H_{\pm}(A - g)|.\] 
	
	Now, suppose that $< g > \cap < A \setminus \{g\}> = \{0\}$ for some $g \in A$.	Let 
	\[B = A \cup \{0\} \setminus \{g\} = \{a_1, a_2, \ldots, a_k\},\]
	where $a_1 = 0$.Then 
	\[H_{\pm}B = \biggl \{\sum_{i = 1}^k s_ia_i : s_i \in \mathbb{Z} ~\text{for each}~ i \in [1, k], \sum_{i = 1}^k |s_i| \in H \biggl\}.\] 
	Suppose we take the set $h_{\pm} B$, such that  each element has a unique representation. That is, if $\sum_{i = 1}^k s_ia_i$ and $\sum_{i = 1}^k u_ia_i$ are two elements of $h_{\pm}B$such that
	\[\sum_{i = 1}^k s_ia_i = \sum_{i = 1}^k u_ia_i,\]
	then 
	\[s_i = u_i\] 
	for $i = 1,2, \ldots, k$. Now we consider the set 
	\[X = \bigg \{s_1 g + \sum_{i = 2}^k s_i a_i : \sum_{i = 1}^k s_ia_i \in H_{\pm}B \bigg\}.\] 
	It is easy to verify that 
	\[X \subseteq H_{\pm}A.\] 
	Now we want to show that $|X| = |H_{\pm}B|$. Suppose that  $\sum_{j = 1}^k s_ia_i$ and $\sum_{i = 1}^k u_ia_i$ are in $h_{\pm}(A - g)$ such that $\sum_{i = 1}^k s_ia_i \neq \sum_{i = 1}^k u_ia_i$ but 
	\[s_1 g+ \sum_{i = 2}^k s_i a_i = u_1 g +\sum_{i = 2}^k u_i a_i.\] 
	If $s_1 = u_1$, then
	\[\sum_{i = 2}^k s_i a_i = \sum_{i = 2}^k u_i a_i,\]
	and so
	\[\sum_{i = 1}^k s_ia_i \neq \sum_{i = 1}^k u_ia_i\]
	which is a contradiction. Therefore,
	\[s_1 \neq u_1,\]
	and so
	\[0 \neq \sum_{i = 2}^k (s_i - u_1) a_i = (u_1 - s_1)g.\]
	Hence
	\[<g> \cap <A \setminus \{g\}> \neq \{0\}\]
	which is again a contradiction that  $< g > \cap <A - g > = \{0\}$. Therefore,
	\[|H_{\pm}A| \geq |X| = |H_{\pm}B|.\] 
	This completes the proof.
\end{proof}

Next we prove some results for the functions $\rho_{\pm}^{(s)} (G, m, h)$ and $\rho_{\pm}^{(s)} (G, m, [0, h])$. The following theorem establishes the lower bound for $\rho_{\pm}^{(s)} (G, m, h)$.


\begin{proof}[Proof of Theorem \ref{ss-nasym-group-thm:5}]
	Since $s$ is a positive integer, it follows that $A\cap (- A)\neq \varnothing$. Hence it follows from Lemma \ref{ss-nasym-group-lem:1} that 
	\[h_{\pm}A = h(A \cup (- A)).\]
	Now an application of Lemma \ref{group-h-fold-lem:1} yields
	\begin{align*}
		|h_{\pm}A| = |h(A \cup (- A))| \geq \min (p(G), 2hm - h|A \cap(- A)| - h + 1).
	\end{align*} 
	This implies \eqref{ss-nasym-group-thm:5-eq:1}. 
	
	Now we prove the second part of the theorem. First assume that
	\begin{equation}\label{ss-nasym-group-thm:5-eq:6}
		\rho_{\pm}^{(s)} (G, m, h) = \min (p(G), 2hm - hs - h + 1).
	\end{equation}
	Then
	\[\rho_{\pm}^{(s)} (G, m, h) = |h_{\pm} A_0|\]
	for some $A_0 \subseteq G$, where $|A_0| = m$ and $|A_0 \cap (- A_0)| = s$. Since $s \geq 1$, it follows from Lemma \ref{ss-nasym-group-lem:3} that
	\[\rho_{\pm}^{(s)} (G, m, h) = |h (A_0 \cup (- A_0))|.\]
	Since $|A_0 \cup (-A_0)| = 2m -s$, it follows from  \eqref{ss-nasym-group-thm:5-eq:6} and Lemma \ref{group-h-fold-lem:1} that
	\begin{multline*}
		\min (p(G), 2hm - hs - h + 1) =  \rho_{\pm}^{(s)} (G, m, h) = |h (A_0 \cup (- A))|\\
		\geq \rho (G, 2m - s, h) \geq \min (p(G), h(2m - s) - h + 1)\\ = \min (p(G), 2hm - hs - h + 1).
	\end{multline*}
	Hence 
	\[\rho (G, 2m - s, h) = \min (p(G), 2hm - hs - h + 1),\]
	and so, it follows from Lemma \ref{group-h-fold-lem:1} that $2m - s \leq p(G)$, and so $m \leq \dfrac{1}{2}(s + p(G))$.
	
	Conversely, if $m \leq \dfrac{s + p(G)}{2}$, then $2m - s \leq p(G)$. Let $g \in G$ be a nonzero element of order $p(G)$. Let $H = \langle g \rangle$, the subgroup generated by $g$. 
	Let $\phi : \mathbb{Z} \rightarrow H$ be the map defined by
	\[\phi(x) = xg.\]
	For a nonempty finite set $A \subseteq \mathbb{Z}$, let $\phi(A) = \{\phi(a) : a \in A\}$. Then $|\phi(A)| \leq |A|$ and
	\[h_{\pm}(\phi(A)) \subseteq \phi(h_{\pm}A).\]
	Consider the following cases:
	
	\noindent {\textbf{Case 1}} ($s = 2t + 1$, where $t \geq 0$). Let $A = [- t, m - t - 1] \subseteq \mathbb{Z}$, and let $B = \phi(A) \subseteq H \subseteq G$. Since $2m -s \leq p(G)$, it follows that $2(m - t - 1) < p(G)$, and so
	\[|B| = |A| = m,\]
	\[|B \cap (- B)| = |A \cap (-A)| = 2t + 1 = s,\]
	and
	\[h_{\pm}A \subseteq [- h (m - t - 1), h (m - t - 1)].\]
	Hence
	\[|h_{\pm}A| \leq 2hm - hs - h + 1.\]
	But it follows from Theorem \ref{ss-nasym-group-thm:5-eq:1} that
	\[|h_{\pm}A| \geq 2hm - hs - h + 1.\]
	Therefore,
	\[|h_{\pm}A| = 2hm - hs - h + 1.\]
	Hence
	\[~\text{either}~ |\phi(h_{\pm}A)| = 2hm - hs - h + 1 ~\text{or}~ |\phi(h_{\pm}A)| = p(G),\]
	and so
	\[|\phi(h_{\pm}A)| = \min (p(G), 2hm - hs - h + 1).\]
	Since $h_{\pm}(\phi(A)) \subseteq \phi(h_{\pm}A)$, it follows that
	\begin{equation}\label{ss-nasym-group-thm:5-eq:2}
		|h_{\pm}B| = |h_{\pm}(\phi(A))| \leq |\phi(h_{\pm}A)| = \min (p(G), 2hm - hs - h + 1).
	\end{equation}
	Since $|B| = m$, it follows from \eqref{ss-nasym-group-thm:5-eq:1} and \eqref{ss-nasym-group-thm:5-eq:2} that 
	\[|h_{\pm}B| = |\phi(h_{\pm}A)| = \min (p(G), 2hm - hs - h + 1),\]
	and so
	\begin{equation}\label{ss-nasym-group-thm:5-eq:4}
		\rho_{\pm}^{(s)} (G, m, h) \leq \min (p(G), 2hm - hs - h + 1).
	\end{equation}
	It follows from \eqref{ss-nasym-group-thm:5-eq:1} and \eqref{ss-nasym-group-thm:5-eq:4} that
	\[\rho_{\pm}^{(s)} (G, m, h) = \min (p(G), 2hm - hs - h + 1).\]
	
	\noindent {\textbf{Case 2}} ($s = 2t$, where $t \geq 1$). Let $A' = \{-(2t - 1), \ldots, - 1, 1, 3, \ldots, 2(m - t) - 1\} \subseteq \mathbb{Z}$, and let $B' = \phi (A') \subseteq H \subseteq G$. Since $2m - s \leq p(G)$, it follows that $2(m - t) - 1 < p(G)$. Now it is easy to verify that
	\[|B'| = |A'| = m,\]
	\[|B' \cap (- B')| = |A' \cap (- A')| = 2t = s,\]
	and
	\[h_{\pm}A' \subseteq [- 2h(m - t) + h, 2h(m - t) - h],\]
	Since $x \equiv y \pmod{2}$ for all $x, y \in h_{\pm} A'$, it follows that 
	\[|h_{\pm}A'| \leq 2h(m - t) - h + 1.\]
	But it follows from \eqref{ss-nasym-group-thm:5-eq:1} that
	\[|h_{\pm}A'| \geq 2h(m - t) - h + 1,\]
	and so
	\[|h_{\pm}A'| = 2h(m - t) - h + 1  = 2hm - hs - h + 1.\]
	Therefore,
	\begin{equation*}
		h_{\pm} A' =
		\begin{cases}
			2 \ast [- h(m - t) + h/2, h(m - t) - h/2], & ~\text{if}~ h \equiv 0 \pmod{2};\\
			2 \ast [- h(m - t) + \frac{h - 1}{2}, h(m - t) - \frac{h + 1}{2}] + 1, & ~\text{if}~ h \equiv 1 \pmod{2}.  
		\end{cases}
	\end{equation*}
	Hence
	\[~\text{either}~ |\phi(h_{\pm}A')| = 2hm - hs - h + 1 ~\text{or}~ |\phi(h_{\pm}A')| = p(G),\]
	and so
	\begin{equation}\label{ss-nasym-group-thm:5-eq:3}
		|h_{\pm}(\phi(A'))| \leq |\phi(h_{\pm}A')| = \min (p(G), 2hm - hs - h + 1).
	\end{equation}
	Since $|B'| = m$, it follows from \eqref{ss-nasym-group-thm:5-eq:1} and \eqref{ss-nasym-group-thm:5-eq:3} that 
	\[|h_{\pm}B'| = |h_{\pm}(\phi(A'))| = |\phi(h_{\pm}A')| = \min (p(G), 2hm - hs - h + 1),\]
	and so
	\begin{equation}\label{ss-nasym-group-thm:5-eq:5}
		\rho_{\pm}^{(s)} (G, m, h) \leq \min (p(G), 2hm - hs - h + 1).
	\end{equation}
	Therefore, it follows from \eqref{ss-nasym-group-thm:5-eq:1} and \eqref{ss-nasym-group-thm:5-eq:5} that
	\[\rho_{\pm}^{(s)} (G, m, h) = \min (p(G), 2hm - hs - h + 1).\]
	This completes the proof.
\end{proof}

The next theorem characterizes the sets $A$ for which the equality holds in \eqref{ss-nasym-group-thm:5-eq:1}. 
\begin{theorem}\label{ss-nasym-group-thm-inv:6}
	Let $h$ and $k$ be positive integers with $h \geq 2$. Let $A$ be subset of an additive abelian group $G$ with $m$ elements, and let $A \cap (- A) \neq \varnothing$. Then the following conclusions hold:
	\begin{enumerate}
			\item If $|2_{\pm}A| = 4m - 2|A \cap (- A)| - 1 < p(G) - 1$, then $A \cup (- A)$ is an arithmetic progression.
			\item If $h \geq 3$ and $|h_{\pm}A| = 2hm - h|A \cap (- A)| - h + 1 < p(G)$, then $A \cup (- A)$ is an arithmetic progression.
		\end{enumerate} 
\end{theorem}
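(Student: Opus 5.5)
The plan is to reduce to an ordinary $h$-fold sumset and then use the inverse theorem for sums of arithmetic progressions. Put $B = A \cup (-A)$. Since $A \cap (-A) \neq \varnothing$, Lemma \ref{ss-nasym-group-lem:1} gives $h_{\pm}A = hB$. Counting elements, $|B| = 2m - |A\cap(-A)|$. So in both parts the hypothesis reads
\[|hB| = h|B| - h + 1,\]
with $|hB| < p(G)-1$ when $h=2$ and $|hB| < p(G)$ when $h \geq 3$. The task is then to show that $B$ is an arithmetic progression.

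We apply Theorem \ref{kemeperman-inv-abelian-ext-thm} with $A_1 = \cdots = A_h = B$. Then $A_1 + \cdots + A_h = hB$, and the size hypothesis is exactly the one that theorem requires. It also needs $|B| \geq 2$. If $|B| = 1$, then $B = \{a\}$ with $a = -a$. A one-element set is a degenerate progression, so we either treat this case directly or note that it is excluded by any nontrivial reading of the statement. We also remark that $|B|\ge 2$ holds automatically whenever $m\ge 2$.

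Since $|hB| = h|B| - h + 1 = |A_1| + \cdots + |A_h| - h + 1$, the theorem says the $A_i$ are arithmetic progressions with a common difference. As all $A_i$ equal $B$, this means $B = A \cup (-A)$ is an arithmetic progression. This proves part (2); part (1) is the case $h = 2$ with the stronger hypothesis $< p(G)-1$.

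The main obstacle is only bookkeeping. We must check that the equality hypothesis, stated in terms of $m$ and $|A\cap(-A)|$, translates exactly into $|hB| = h|B|-h+1$. We must also confirm that Theorem \ref{kemeperman-inv-abelian-ext-thm} applies to repeated summands and that its strict inequality thresholds match the ones in the statement. Both checks are immediate from the identity $|B| = 2m - |A \cap(-A)|$.
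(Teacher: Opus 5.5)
Your proposal is correct and is essentially the paper's own proof, which consists of the same two steps: Lemma~\ref{ss-nasym-group-lem:1} to rewrite $h_{\pm}A = h(A \cup (-A))$, then Theorem~\ref{kemeperman-inv-abelian-ext-thm} applied to $h$ copies of $A \cup (-A)$. Your identity $|A \cup (-A)| = 2m - |A \cap (-A)|$ and your check of the hypothesis $|A \cup (-A)| \geq 2$ (which holds exactly when $m \geq 2$, while $m = 1$ gives a trivial one-term progression) make explicit details the paper leaves implicit.
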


\begin{proof}
	The proof follows from Lemma \ref{ss-nasym-group-lem:1} and Theorem \ref{kemeperman-inv-abelian-ext-thm}.  
\end{proof}

The following theorem enables us to express the function $\rho_{\pm}^{(s)} (G, m, [0, h])$ as a function $\rho_{\pm}^{(s)} (G, n, h)$ for some appropriate positive integer $n$.

\begin{theorem}\label{ss-group-thm:20}
	Let $h$, $m$ and $s$ be positive integers with $s \leq m$. Let $G$ be an additive abelian group with $p(G) \geq 3$. Then
	\begin{equation*}
		\rho_{\pm}^{(s)} (G, m, [0, h]) =
		\begin{cases}
			\rho_{\pm}^{(s)} (G, m, h), & ~\text{if}~ s \equiv 1 \pmod{2};\\
			\rho_{\pm}^{(s + 1)} (G, m + 1, h), & ~\text{if}~ s \equiv 0 \pmod{2}.  
		\end{cases}
	\end{equation*} 
\end{theorem}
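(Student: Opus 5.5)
The plan is to reduce both cases to Lemma \ref{ss-nasym-group-ss-nasym-group-lem:3}, which gives $[0,h]_{\pm}A = h(A \cup (-A) \cup \{0\})$, and to Lemma \ref{ss-nasym-group-lem:1}, which gives $h_{\pm}A = h(A\cup(-A))$ whenever $A \cap (-A) \neq \varnothing$. We then compare the two minimisation problems set by set.

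\textbf{Parity fact.} The hypothesis $p(G)\ge 3$ enters only here. It says $G$ has no subgroup of order $2$, so $G$ has no element of order $2$. Hence $x=-x$ holds only for $x=0$. The nonzero elements of $A\cap(-A)$ therefore split into disjoint pairs $\{x,-x\}$. It follows that $|A\cap(-A)|$ is odd if and only if $0\in A$.

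\textbf{Odd case.} Let $s$ be odd and let $A$ be any set with $|A|=m$ and $|A\cap(-A)|=s$. By the parity fact, $0\in A$, so $A\cup(-A)\cup\{0\}=A\cup(-A)$. Since $s\ge 1$, the two lemmas give
\[[0,h]_{\pm}A = h(A\cup(-A)) = h_{\pm}A.\]
If $m\ge 2$, Lemma \ref{ss-nasym-group-ss-nasym-group-lem:3} applies. If $m=1$, then $s=1$ forces $A=\{0\}$, and both sides equal $\{0\}$. The two functions minimise the same quantity over the same family of sets, so $\rho_{\pm}^{(s)}(G,m,[0,h])=\rho_{\pm}^{(s)}(G,m,h)$.

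\textbf{Even case.} Let $s\ge 2$ be even, so $m\ge 2$ and $0\notin A$ by the parity fact.

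\begin{itemize}
\item \emph{Forward map.} Send $A$ to $A'=A\cup\{0\}$. Then $|A'|=m+1$, and $A'\cap(-A')=(A\cap(-A))\cup\{0\}$ has size $s+1$.
\item \emph{Identity of sumsets.} Since $A'\cup(-A')=A\cup(-A)\cup\{0\}$, the two lemmas give
\[[0,h]_{\pm}A = h\bigl(A\cup(-A)\cup\{0\}\bigr)=h(A'\cup(-A'))=h_{\pm}A'.\]
\item \emph{Inverse map.} Take any $B$ with $|B|=m+1$ and $|B\cap(-B)|=s+1$. Since $s+1$ is odd, the parity fact gives $0\in B$. Then $A=B\setminus\{0\}$ has $|A|=m$ and $|A\cap(-A)|=s$, and $B=A\cup\{0\}$.
\end{itemize}

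So $A\mapsto A\cup\{0\}$ is a bijection between the two admissible families, and it preserves the relevant sumset. Taking minima gives $\rho_{\pm}^{(s)}(G,m,[0,h])=\rho_{\pm}^{(s+1)}(G,m+1,h)$.

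The main point needing care is the parity fact, since it is the only place where $p(G)\ge 3$ is used. Without it, elements of order $2$ would break the equivalence between odd parity of $|A\cap(-A)|$ and $0\in A$. The small case $m=1$ also has to be checked directly, because Lemma \ref{ss-nasym-group-ss-nasym-group-lem:3} assumes $|A|\ge 2$.
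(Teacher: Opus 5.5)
Your proof is correct and takes essentially the paper's route. Both arguments combine the parity fact ($0\in A$ iff $|A\cap(-A)|$ is odd when $p(G)\ge 3$) with the identification $[0,h]_{\pm}A = h_{\pm}(A\cup\{0\})$. You reach that identification through $h(A\cup(-A)\cup\{0\})$ using Lemma~\ref{ss-nasym-group-ss-nasym-group-lem:3} and Lemma~\ref{ss-nasym-group-lem:1}, whereas the paper applies Lemma~\ref{ss-nasym-group-lem:1a} directly. Your explicit bijection of admissible families replaces the paper's two-sided minimizer-and-contradiction argument, and you also supply details the paper omits: a proof of the parity fact and the $m=1$ edge case.
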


\begin{proof}
	First assume that $s \equiv 1 \pmod{2}$. Let
	\[\rho_{\pm}^{(s)} (G, m, [0, h]) = |[0, h]_{\pm} A_0|\]
	for some $A_0 \subseteq G$, where $|A_0| = m$ and $|A_0 \cap (-A_0)| = s$. Then it follows from Lemma \ref{ss-nasym-group-lem:1a} that
	\[\rho_{\pm}^{(s)} (G, m, [0, h]) = |[0, h]_{\pm} A_0| = |h_{\pm} (A_0 \cup \{0\})|.\]
	Since $p(G) \geq 3$ and $s$ is odd, it follows that $0 \in A_0$, and so
	\[\rho_{\pm}^{(s)} (G, m, [0, h]) = |h_{\pm} (A_0 \cup \{0\})| = |h_{\pm}A_0| \geq \rho_{\pm}^{(s)} (G, m, h).\]
	Now we show that $\rho_{\pm}^{(s)} (G, m, [0, h]) = \rho_{\pm}^{(s)} (G, m, h)$. Suppose that $\rho_{\pm}^{(s)} (G, m, [0, h]) > \rho_{\pm}^{(s)} (G, m, h)$. Let
	\[\rho_{\pm}^{(s)} (G, m, h) = |h_{\pm} A_1|\]
	for some $A_1 \subseteq G$ with $|A_1| = m$ and $|A_1 \cap (-A_1)| = s$. Since $p(G) \geq 3$ and $s$ is odd, it follows that $0 \in A_1$. Hence $h_{\pm} A_1 = [0, h]_{\pm} A_1$, and so
	\[\rho_{\pm}^{(s)} (G, m, h) = |h_{\pm} A_1| = |[0, h]_{\pm} A_1| \geq \rho_{\pm}^{(s)} (G, m, [0, h]),\]
	which is a contradiction. Therefore, 
	\[\rho_{\pm}^{(s)} (G, m, [0, h]) = \rho_{\pm}^{(s)} (G, m, h).\]
	
	Now assume that  $s \equiv 0 \pmod{2}$. Let
	\[\rho_{\pm}^{(s)} (G, m, [0, h]) = |[0, h]_{\pm} A_2|,\]
	for some $A_2 \subseteq G$,	where $|A_2| = m$ and $|A_2 \cap (-A_2)| = s$. It follows from Lemma \ref{ss-nasym-group-lem:1a} that
	\[\rho_{\pm}^{(s)} (G, m, [0, h]) = |[0, h]_{\pm} A_2| = |h_{\pm} (A_2 \cup \{0\})|.\]
	Since $p(G) \geq 3$, it follows that $0 \not\in A_2$, and so
	\[\rho_{\pm}^{(s)} (G, m, [0, h]) = |h_{\pm} (A_2 \cup \{0\})|  \geq \rho_{\pm}^{(s + 1)} (G, m + 1, h).\]
	Suppose that $\rho_{\pm}^{(s)} (G, m, [0, h]) > \rho_{\pm}^{(s + 1)} (G, m + 1, h)$. Let 
	\[\rho_{\pm}^{(s + 1)} (G, m + 1, h) = |h_{\pm} A_3|\]
	for some $A_3 \subseteq G$ with $|A_3| = m + 1$ and $|A_3 \cap (-A_3)| = s + 1$. Since $p(G) \geq 3$ and $s$ is an even, it follows that $0 \in A_3$. Hence
	\[\rho_{\pm}^{(s + 1)} (G, m + 1, h) = |h_{\pm} A_3| = |[0, h]_{\pm} (A_3 \setminus \{0\})| \geq \rho_{\pm}^{(s)} (G, m, [0, h]),\]
	which is a contradiction. Therefore, 
	\[\rho_{\pm}^{(s)} (G, m, [0, h]) = \rho_{\pm}^{(s + 1)} (G, m + 1, h).\]
	This completes the proof.
\end{proof}

\begin{theorem}\label{ss-group-thm:2}
	Let $G$ be an additive abelian group. Let $h$ and $m \geq 2$ be positive integers. Then
	\begin{enumerate}
		\item $\rho_{\pm}(G, m, [0, h]) = \min \{|[0, h]_{\pm}A| : A \in \text{Sym}(G, m)\}$.
		\item $\rho_{\pm}(G, m, [0, h]) = \min \{|h(A \cup \{0\})| : A \in \text{Sym}(G, m)\}$.    
		\item If $m$ is an odd integer, then 
		\[\rho_{\pm}(G, m, [0, h]) = \min \{|hA|: A \in \text{Sym}(G, m), 0 \in A\}.\]
		\item If $n$ is an odd integer, then
		\[\rho_{\pm}(G, m, [0, h]) = \min \{|hA|: A \in \text{Sym}(G, 2 \lfloor m / 2 \rfloor + 1), 0 \in A\}.\]
	\end{enumerate}
\end{theorem}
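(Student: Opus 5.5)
The whole argument rests on Lemma \ref{ss-nasym-group-ss-nasym-group-lem:3}: for $|A| \geq 2$ we have $[0,h]_{\pm}A = h(T_A)$, where $T_A = A \cup (-A) \cup \{0\}$. The set $T_A$ is symmetric, contains $0$, and has $|T_A| \geq m$. It splits into orbits of $x \mapsto -x$. These orbits are pairs $\{x,-x\}$ with $2x \neq 0$, or singletons $\{x\}$ with $2x = 0$; the singleton $\{0\}$ always occurs. The sum of $h$ elements is monotone under inclusion, so $B \cup \{0\} \subseteq T_A$ implies $h(B \cup \{0\}) \subseteq h(T_A)$. Hence it suffices to produce suitable symmetric $B$ inside $T_A$ that are unions of orbits.

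For (1) and (2), I fix $A$ with $|A| = m$ attaining $\rho_{\pm}(G,m,[0,h])$. I need a union $B$ of orbits of $T_A$ with $|B| = m$. This is a subset-sum problem with parts of size $1$ and $2$, total at least $m$, and at least one part of size $1$ (namely $\{0\}$). Such a problem can hit every value up to the total, so $B$ exists. Then $B \in \mathrm{Sym}(G,m)$ and $B \cup (-B) \cup \{0\} = B \cup \{0\} \subseteq T_A$. By Lemma \ref{ss-nasym-group-ss-nasym-group-lem:3} this gives $[0,h]_{\pm}B = h(B \cup \{0\}) \subseteq [0,h]_{\pm}A$. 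Minimality then forces equality of sizes, which proves (1). Part (2) is (1) rewritten through the same lemma, since $[0,h]_{\pm}B = h(B \cup \{0\})$ for symmetric $B$.

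For (3), with $m$ odd, I refine the choice by always putting $\{0\}$ into $B$ and filling the remaining $m-1$ from the orbits of $T_A \setminus \{0\}$. Their total size is at least $m-1$, and $m-1$ is even. Parts of size $1$ and $2$ can realize every even value up to their total, so $B$ exists. Then $0 \in B$ and $h(B \cup \{0\}) = hB$. For the reverse inequality, take any symmetric $C$ of size $m$ with $0 \in C$. Then $[0,h]_{\pm}C = h(C \cup \{0\}) = hC$, so $\rho_{\pm}(G,m,[0,h]) \leq |hC|$.

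For (4) (where ``$n$ odd'' should read as general $m$, and it reduces to (3) when $m$ is odd), take $m$ even. The upper bound is easy. Given $C \in \mathrm{Sym}(G,m+1)$ with $0 \in C$, the set $C \setminus \{0\}$ is symmetric of size $m$, and $[0,h]_{\pm}(C \setminus \{0\}) = hC$. For the lower bound, by (2) I take a symmetric $B$ of size $m$ minimizing $|h(B \cup \{0\})|$. If $0 \notin B$, then $B \cup \{0\}$ is a symmetric set of size $m+1$ containing $0$, and we are done. The main obstacle is the case $0 \in B$. Then $B$ has even size and contains $0$, so it contains a nonzero element $e$ with $2e = 0$, and $T_B = B$ has only $m$ elements. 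In this case I would first try to enlarge $B$ by one more orbit without enlarging $hB$, or to swap $e$ for a pair, using Lemma \ref{group-h-fold-lem:1}/Theorem \ref{devos-extension} to compare sizes. I expect this step to need extra care, and possibly an additional hypothesis; it is the first place I would check for a counterexample.
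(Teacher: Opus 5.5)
Your proofs of (1)--(3) are correct, and they are tidier than the paper's. The paper iterates the swap of Lemma~\ref{ss-nasym-group-lem:3} and then fixes the near-symmetric case by exchanging $0$ for $-a$, or $a$ for $0$; for (3) it replaces an order-$2$ element of an optimal symmetric set by $0$. You instead split $T_A=A\cup(-A)\cup\{0\}$ into $\pm$-orbits and extract a union of orbits of size $m$ in one step, containing $\{0\}$ when $m$ is odd.

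\textbf{The gap is in (4).} The hypothesis ``$n$ is odd'' is not a misprint for ``general $m$''. Here $n$ stands for $|G|$, as in Theorem~\ref{plagne-group-h-fold-thm:1}. The paper's proof uses exactly the fact that a group of odd order has no element $e\neq 0$ with $2e=0$, so a symmetric set of even size cannot contain $0$. By dropping this hypothesis you got stuck in precisely the case it is there to exclude. As you observe, $0\in B$ with $|B|=m$ even forces such an $e$ into $B$. When $|G|$ is odd this case cannot occur. Then $0\notin B$, so $B\cup\{0\}\in\mathrm{Sym}(G,m+1)$ contains $0$, and your own upper and lower bounds prove (4) immediately. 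For odd $m$, (4) is just (3).

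Moreover, the repair you contemplate cannot work, because the version you tried to prove is false. You suggest enlarging $B$ by an orbit, or swapping $e$ for a pair, using Theorem~\ref{devos-extension} to compare sizes. But take $G=\mathbb{Z}_2\times\mathbb{Z}_2$ and $m=2$:
\begin{itemize}
\item For $A=\{0,e\}$ one has $[0,h]_{\pm}A=\{0,e\}$. Every $2$-set satisfies $|[0,h]_{\pm}A|\geq|A\cup\{0\}|\geq 2$, so $\rho_{\pm}(G,2,[0,h])=2$.
\item Every $C\in\mathrm{Sym}(G,3)$ with $0\in C$ satisfies $|hC|\geq|C|=3$.
\end{itemize}
The same failure occurs in $\mathbb{Z}\times\mathbb{Z}_2$ with $A=\{(0,0),(0,1)\}$. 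So for even $m$, the odd-order hypothesis (more generally, the absence of nonzero elements of order $2$) is indispensable. It is exactly the ingredient your argument for (4) is missing.
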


\begin{proof}
	Let $A_0 \subseteq G$ such that $|A_0| = m$ and
	\[\rho_{\pm}(G, m, [0, h]) = |[0, h]_{\pm}A_0|.\]
	Then it follows from Lemma \ref{ss-nasym-group-ss-nasym-group-lem:3} that
	\[\rho_{\pm}(G, m, [0, h]) = |[0, h]_{\pm}A_0|= |h(A_0 \cup (- A_0) \cup \{0\})|.\]
	If $A_0 \cap (- A_0) = \varnothing$, then for every $ a \in A_0$ such that $- a \not\in A_0$. Let 
	\[A_1 = (A_0 \cup \{- a\}) \setminus\{b\},\]
	where $a, b \in A_0$ with $a \neq b$. It is easy to verify that
	\[|A_1| = |A_0|,\]
	\[A_1 \cap (- A_1) \neq \varnothing,\]
	\[\mathrm{sdeg}(A_1) = \mathrm{sdeg}(A_0) + 2,\]
	and
	\[A_1 \cup (- A_1) \cup \{0\} \subseteq A_0 \cup (- A_0) \cup \{0\}.\]
	Since $\rho_{\pm}(G, m, [0, h]) = |[0, h]_{\pm}A_0|$, $|A_1| = m$, and $A_1 \cup (- A_1) \cup \{0\} \subseteq A_0 \cup (- A_0) \cup \{0\}$, it follows from Lemma \ref{ss-nasym-group-ss-nasym-group-lem:3} that
	\[\rho_{\pm}(G, m, [0, h]) = |h(A_1 \cup (- A_1) \cup \{0\})| = |[0, h]_{\pm}A_1|.\]
	If $|A_1| = 2$, then $A_1$ is a symmetric set, so we are done. Assume that $|A_1| \geq 3$. It follows from Lemma \ref{ss-nasym-group-lem:3} that there exits a positive $t$ such that
	\[|A_t| = |A_0|,\]
	\[\mathrm{sdeg}(A_t) \in \{|A_0| - 1, |A_0|\},\]
	and
	\[\rho_{\pm}(G, m, [0, h]) = |h(A_t \cup (- A_t) \cup \{0\})| = |[0, h]_{\pm}A_t|.\]
	Since $\mathrm{sdeg}(A_t) \in \{|A_0| - 1, |A_0|\}$, it follows that
	\[A_t \in \text{Sym}(G, m) \cup \text{Nsym}(G, m).\]
	If $A_t \in \text{Sym}(G, m)$, then we are done. Now assume that $A_t \in \text{Nsym}(G, m)$, then there $a \in A_t$ such that $A \setminus \{a\}$ is a symmetric set. Note that $a \neq 0$.
	
	First, we assume that $0 \in A_t$. Let
	\[B = (A_t \cup \{- a\}) \setminus \{0\}.\]
	Then
	\[B \cup (- B) \cup \{0\} \subseteq A_t \cup (- A_t) \cup \{0\},\]
	and
	\[B \in \text{Sym}(G, m).\]
	Hence
	\[h(B \cup (- B) \cup \{0\}) \subseteq h(A_t \cup (- A_t) \cup \{0\}).\]
	By applying Lemma \ref{ss-nasym-group-ss-nasym-group-lem:3}, we get
	\[\rho_{\pm}(G, m, [0, h]) = |h(B \cup (- B) \cup \{0\})| = |[0, h]_{\pm}B|,\]
	where $B \in \text{Sym}(G, m)$.
	
	Now assume that $0 \not\in A_t$. Let
	\[C = (A_t \cup \{0\}) \setminus \{a\}.\]
	Then
	\[C \cup (- C) \cup \{0\} \subseteq A_t \cup (- A_t) \cup \{0\},\]
	and
	\[C \in \text{Sym}(G, m).\]
	Hence
	\[h(C \cup (- C) \cup \{0\}) \subseteq h(A_t \cup (- A_t) \cup \{0\}).\]
	By applying Lemma \ref{ss-nasym-group-ss-nasym-group-lem:3}, we get
	\[\rho_{\pm}(G, m, [0, h]) = |h(C \cup (- C) \cup \{0\})| = |[0, h]_{\pm}C|,\]
	where $C \in \text{Sym}(G, m)$. Therefore,
	\[\rho_{\pm}(G, m, [0, h]) = \min \{|[0, h]_{\pm}A| : A \in \text{Sym}(G, m)\}.\]
	This proves the first part of the theorem.
	
	By applying Lemma \ref{ss-nasym-group-ss-nasym-group-lem:3}, we get
	\[\rho_{\pm}(G, m, [0, h]) = \min \{|h(A \cup \{0\})| : A \in \text{Sym}(G, m)\}.\]
	This concludes the proof of the second part.
	
	Now we assume that $m$ is an odd integer. Let $A_0 \in \text{Sym}(G, m)$ such that
	\[\rho_{\pm}(G, m, [0, h]) = |h(A_0 \cup \{0\})|.\]
	If $0 \in A_0$, then we are done. Now, we assume that $0 \not\in A_0$. Since $A_0 \in \text{Sym}(G, m)$ and $m$ is odd, it follows that there exists $a_0 \in A$ such that
	\[A_0 = B_0 \cup \{a_0\},\]
	and
	\[B_0 = - B_0,\]
	where $a_0 = - a_0$ with $a_0 \not\in B_0$. Let $A_1 = B_0 \cup \{0\}$. Then
	\[|A_1| = m,\]
	and
	\[h A_1 = h(B_0 \cup \{0\}) \subseteq h(A_0 \cup \{0\}).\]
	Hence
	\begin{equation}\label{eq:7}
		|h A_1| = |h(A_1 \cup \{0\})| \leq |h(A_0 \cup \{0\})|.
	\end{equation}
	Since $\rho_{\pm}(G, m, [0, h]) = |h(A_0 \cup \{0\})|$ and $|A_0| = |A_1| = m$, it follows from \eqref{eq:7} that 
	\[\rho_{\pm}(G, m, [0, h]) = |h A_1|,\]
	and
	\[0 \in A_1.\]
	Therefore,
	\[\rho_{\pm}(G, m, [0, h]) = \min \{|hA| : A \in \text{Sym}(G, m), 0 \in A\}.\]
	This concludes the proof of the second part.
	
	Now, we assume that $n$ is an odd integer. Let $A_0 \in \text{Sym}(G, m)$ such that
	\[\rho_{\pm}(G, m, [0, h]) = |h(A_0 \cup \{0\})|.\]
	If $m$ is even, then since $n$ is an odd integer and $A_0 \in \text{Sym}(G, m)$, it follows that
	\[0 \notin A_0,\]
	and so
	\[|A_0 \cup \{0\}| = m + 1.\]
	Hence
	\[\rho_{\pm}(G, m, [0, h]) = \min \{|hA|: A \in \text{Sym}(G, m + 1), 0 \in A\}.\]
	If $m$ is odd, then since $n$ is odd, and $A_0 \in \text{Sym}(G, m)$, it follows that
	\[0 \in A_0,\]
	and so
	\[|A_0 \cup \{0\}| = m.\]
	Hence
	\[\rho_{\pm}(G, m, [0, h]) = \min \{|hA|: A \in \text{Sym}(G, m), 0 \in A\}.\]
	Therefore,
	\[\rho_{\pm}(G, m, [0, h]) = \min \{|hA|: A \in \text{Sym}(G, 2 \lfloor m / 2 \rfloor + 1), 0 \in A\}.\]
	This concludes the proof of the third part.
\end{proof}

\subsection{Size of the signed sumset $h_{\pm}A$ in a field} \label{sec-hfold-signed-sum-field}
Obtaining the optimal lower bound on the size of the signed sumset $h_{\pm}A$ in a field seems very difficult. In this subsction, we use the polynomial method to obtain few more results on the size of $h$-fold signed sumsets $h_{\pm}A$ for $h \in \{2, 3, 4\}$. The following theorem due to Alon, Nathanson and Ruzsa \cite{alon2} will be useful for the proofs.

\begin{theorem}\label{thm:A_2}
	Let $h$ be a positive integer, and let $p(x_1, \ldots, x_h) \in \mathbb{F}[x_1, \ldots, x_h]$ be a polynomial defined over a field $\mathbb{F}$. Let $K$ be a nonnegative integer, and let $A_1, \ldots, A_h$ be nonempty finite subsets of $\mathbb{F}$ such that $\sum_{i = 1}^h |A_i| =  K + h + ~\mathrm{deg} (p)$, where $\mathrm{deg}(p)$ denote the degree of the polynomial $p(x_1, \ldots, x_h)$. Let
	\[S = \{a_1 + \cdots + a_h : a_i \in A_i ~\text{for each}~ i \in [1, h],\ p(a_1, \ldots, a_h)\neq 0\}.\]
	If the coefficient of $x_1^{|A_1| - 1} \ldots x_h^{|A_h| - 1}$ in the polynomial $(x_1 + \cdots + x_h)^K p(x_1, \ldots, x_h)$ is non-zero, then 
	\[|S|\geq K + 1.\]
\end{theorem}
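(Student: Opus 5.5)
The plan is to derive the statement from Alon's Combinatorial Nullstellensatz by the standard contradiction argument. I will use the Nullstellensatz in this form: if $Q \in \mathbb{F}[x_1, \ldots, x_h]$ has degree $\sum_{i=1}^h t_i$, and the coefficient of $x_1^{t_1}\cdots x_h^{t_h}$ in $Q$ is nonzero, then for any sets $A_i \subseteq \mathbb{F}$ with $|A_i| > t_i$ there is a point $(a_1,\ldots,a_h) \in A_1\times\cdots\times A_h$ with $Q(a_1,\ldots,a_h)\neq 0$. Here I take $t_i = |A_i| - 1$. The hypothesis $\sum_{i=1}^h |A_i| = K + h + \mathrm{deg}(p)$ says exactly that $\sum_{i=1}^h t_i = K + \mathrm{deg}(p)$.

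Suppose, for a contradiction, that $|S| = r \leq K$, and write $S = \{c_1, \ldots, c_r\}$. Let $\sigma = x_1 + \cdots + x_h$ and define
\[
Q(x_1,\ldots,x_h) = p(x_1,\ldots,x_h)\,\sigma^{K-r}\prod_{j=1}^{r}(\sigma - c_j).
\]
I pad with the factor $\sigma^{K-r}$ rather than enlarging $S$ to a set of exactly $K$ field elements. This avoids any need for $|\mathbb{F}| \geq K$, which can fail when $\mathbb{F}$ is finite.

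First I check that $Q$ vanishes on $A_1\times\cdots\times A_h$. Take any point of the grid. If $p(a_1,\ldots,a_h) = 0$, then $Q$ vanishes there. Otherwise $a_1+\cdots+a_h \in S$ by the definition of $S$, so one of the factors $\sigma - c_j$ vanishes there.

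Next I check the degree and the relevant coefficient. Clearly $\mathrm{deg}(Q) \leq K + \mathrm{deg}(p)$. The homogeneous part of $Q$ of degree $K + \mathrm{deg}(p)$ is $\sigma^K p_{\mathrm{top}}$, where $p_{\mathrm{top}}$ is the top-degree homogeneous part of $p$. The monomial $x_1^{|A_1|-1}\cdots x_h^{|A_h|-1}$ has degree exactly $K+\mathrm{deg}(p)$. So its coefficient in $Q$ equals its coefficient in $\sigma^K p_{\mathrm{top}}$, which equals its coefficient in $(x_1+\cdots+x_h)^K p$, since the lower-degree parts of $p$ cannot contribute to a monomial of this degree. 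By hypothesis this coefficient is nonzero. In particular $\mathrm{deg}(Q) = \sum_{i=1}^h (|A_i|-1)$.

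The Nullstellensatz now produces a point of $A_1\times\cdots\times A_h$ at which $Q$ does not vanish. This contradicts the vanishing shown above, so $|S| \geq K+1$. The only delicate step is the coefficient comparison: one must argue that only the top-degree component of $p$ matters and that the constants $c_j$ do not affect the top-degree part. Beyond that the argument is routine.
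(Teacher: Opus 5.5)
Your proof is correct. The paper gives no proof of its own and simply cites Alon--Nathanson--Ruzsa; their argument is exactly yours: multiply $p$ by $\prod_{e\in E}(x_1+\cdots+x_h-e)$ over a multiset $E\supseteq S$ of size $K$, note that this product vanishes on $A_1\times\cdots\times A_h$, and contradict the nonvanishing-coefficient lemma (the Combinatorial Nullstellensatz). Your factor $\sigma^{K-r}$ is just padding $E$ with $K-r$ copies of $0$, and your top-degree comparison is the right justification for the coefficient step.
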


In this section, we proved the following results
\begin{theorem}\label{ss-asym-prime-thm:9} 
	Let $A$ be a nonempty finite subset of  a field $\mathbb{F}$ with $|A| = k$ and $A \cap (- A) = \varnothing$. Then
	\begin{equation*}
		|2_{\pm}A| \geq
		\begin{cases}
			4k - 2, & \mbox{if } p(\mathbb{F}) \geq 4k - 1; \\
			p(\mathbb{F}) - 1, & \mbox{otherwise}.
		\end{cases}
	\end{equation*}
\end{theorem}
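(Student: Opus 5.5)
The plan is to reduce the statement to the Cauchy--Davenport type bound of Theorem~\ref{devos-extension}, applied to the symmetric set $B = A \cup (-A)$. This should avoid the polynomial method entirely. Since $A \cap (-A) = \varnothing$, we have $|B| = 2k$ and $B = -B$. Here $p(\mathbb{F})$ is the order of the smallest nontrivial additive subgroup of $\mathbb{F}$: it equals the characteristic if that is positive, and $\infty$ otherwise.

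The key step is the set identity
\[
2_{\pm}A \supseteq (B + B) \setminus \{0\},
\]
which I will check directly. Take $x, y \in B$ with $x + y \neq 0$, and write $x = \varepsilon a$, $y = \delta b$ with $a, b \in A$ and $\varepsilon, \delta \in \{\pm 1\}$.
\begin{enumerate}
\item If $a \neq b$, then $x + y = \varepsilon a + \delta b$ has coefficient vector with $\sum |\lambda_i| = 2$, so it lies in $2_{\pm}A$.
\item If $a = b$, then $\varepsilon = -\delta$ would force $x + y = 0$, which is excluded. Hence $\varepsilon = \delta$ and $x + y = 2\varepsilon a \in 2_{\pm}A$, taking $\lambda = \pm 2$ on $a$.
\end{enumerate}
(The reverse inclusion also holds, apart from the possibility $0 \in 2_{\pm}A$ in characteristic $2$, but only the displayed inclusion is needed for a lower bound.)

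Next, note that $0 \in B + B$, because for any $a \in A$ both $a$ and $-a$ lie in $B$. Removing $0$ therefore costs exactly one element, so
\[
|2_{\pm}A| \geq |B + B| - 1.
\]
Applying Theorem~\ref{devos-extension} to the additive group of $\mathbb{F}$ gives $|B + B| \geq \min(p(\mathbb{F}), 4k - 1)$.

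It remains to split into the two cases of the statement.
\begin{enumerate}
\item If $p(\mathbb{F}) \geq 4k - 1$, the bound gives $|2_{\pm}A| \geq 4k - 2$.
\item Otherwise it gives $|2_{\pm}A| \geq p(\mathbb{F}) - 1$.
\end{enumerate}

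I do not expect a serious obstacle. The only delicate point is the case analysis behind the inclusion, in particular that the diagonal sums $x + x = 2x$ are genuinely in $2_{\pm}A$ via the coefficient $\pm 2$, and that the only sums lost are those equal to $0$. That exclusion is harmless because it removes a single element.

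If a sharper argument were wanted, one could instead apply Theorem~\ref{thm:A_2} with $A_1 = B$, $A_2 = B \setminus \{b\}$ and $p(x,y) = x + y$. However, the coefficient check there is no easier than the direct route above, so I would use the DeVos reduction.
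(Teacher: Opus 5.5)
Your proof is correct, and it takes a different and more elementary route than the paper. The paper applies the Alon--Nathanson--Ruzsa theorem (Theorem~\ref{thm:A_2}) with $A_1 = A_2 = A \cup (-A)$ and restricting polynomial $x_1 + x_2$. It checks that $\binom{4k-2}{2k-1} \not\equiv 0$ modulo $p(\mathbb{F})$ when $p(\mathbb{F}) \geq 4k-1$. In the remaining case it shrinks to a subset $A' \subseteq A \cup (-A)$ of size $(p(\mathbb{F})+1)/2$, so that the relevant binomial coefficient $\binom{p(\mathbb{F})-1}{(p(\mathbb{F})-3)/2}$ stays nonzero. This step implicitly uses that $p(\mathbb{F})$ is odd, which holds because $A \cap (-A) = \varnothing$ rules out characteristic $2$.

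You instead observe that the condition $x + y \neq 0$ deletes only the single element $0$ from $B + B$. Then Theorem~\ref{devos-extension}, minus one, gives $\min(p(\mathbb{F}), 4k-1) - 1$ at once, with no coefficient computation and no case-dependent trimming. Neither the inclusion $(B+B)\setminus\{0\} \subseteq 2_{\pm}A$ nor DeVos's theorem uses the field structure. So your argument actually proves $|2_{\pm}A| \geq \min(p(G), 4k-1) - 1$ for $A \cap (-A) = \varnothing$ in any abelian group $G$.

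What the paper's approach buys is uniformity with its treatment of $h = 3, 4$. There the restricting polynomial $\prod_{i<j}(x_i + x_j)$ discards all representations with a cancelling pair. For $h = 3$ the elements lost this way can be all of $B$. A ``subtract the lost points'' count then gives only $|3B| - |B| \geq 4k-2$, far below the $6k-5$ the polynomial method yields.

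One small correction to your closing aside: Theorem~\ref{thm:A_2} with $A_1 = B$, $A_2 = B \setminus \{b\}$ gives $K = 4k-4$, hence only $|2_{\pm}A| \geq 4k-3$. That variant is weaker, not sharper. The paper's choice $A_1 = A_2 = B$ is what reaches $4k-2$.
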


\begin{proof}
	First assume that $p(\mathbb{F}) \geq 4k - 1$. Let $A^* = A \cup (- A)$. Then $|A^*| = 2k$. In the Theorem \ref{thm:A_2}, let $h = 2$, $A_1 = A_2 = A^*$ and $p(x_1, x_2) = x_1 + x_2$. Then we get $K= 4k - 3$ and \[2_{\pm}A = \{a_1 + a_2 : a_1, a_2 \in A^* ~\text{with}~ a_1 + a_2\neq 0\}.\]
	Since the coefficient of $x_1^{2k - 1}x_2^{2k - 1}$ in the polynomial $(x_1 + x_2)^{K + 1}$ is $\binom{4k - 2}{2k - 1} = \frac{(4k - 2)!}{((2k - 1)!)^2}$, which is nonzero modulo $p(\mathbb{F})$, it follows from Theorem \ref{thm:A_2} that
	\[|2_{\pm}A| \supseteq |\{a_1 + a_2 : a_1, a_2 \in A^* ~\text{with}~ a_1 + a_2 \neq 0\}| \geq K + 1 = 4k - 2.\]
	
	Now assume that	$p(\mathbb{F}) \leq 4k - 3$. Then $p(\mathbb{F}) = 4k - 3 - 2l$ for some integer $l$. We choose a subset of $A'$ of $A^*$ such that $|A'| = 2k - l - 1$. Let $A_1 = A_2 = A'$. The coefficient of $x_1^{2k - l -2}x_2^{2k - l - 2}$ in the polynomial $(x_1 + x_2)^{K + 1}$ is 
	\[\binom{4k - 2l - 4}{2k - l - 2} = \frac{(4k - 2l - 4)!}{((2k - l - 2)!)^2}\]
	which is nonzero modulo $p(\mathbb{F})$. Therefore, by applying Theorem \ref{thm:A_2} on the sets $A_1 = A_2 = A'$, we get
	\[|2_{\pm}A| \geq |\{a_1 + a_2 : a_1, a_2 \in A' ~\text{with}~ a_1 + a_2 \neq 0\}| \geq K + 1 = 4k - 4 - 2l = p(\mathbb{F}) - 1.\] 
	This completes the proof.
\end{proof}

\begin{theorem}\label{ss-asym-prime-thm:10}
	Let $\mathbb{F}$ be a field such that $p(\mathbb{F}) > 6k - 6$ with $p(\mathbb{F}) \neq 8k - 7$. Let $A$ be a nonempty subset of $\mathbb{F}$ with $k \geq 2$ elements such that $A \cap (- A) = \varnothing$. Then
	\[|3_{\pm}A| \geq 6k - 5.\]
\end{theorem}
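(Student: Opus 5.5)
Since $A \cap (-A) = \varnothing$, the set $A^* = A \cup (-A)$ has exactly $2k$ elements. Every sum $a_1 + a_2 + a_3$ with $a_i \in A^*$ such that no two of the summands are negatives of each other lies in $3_{\pm}A$: such a sum arises from a choice of signed coefficients with total absolute value $3$, and no cancellation forces a smaller total. We therefore apply Theorem \ref{thm:A_2} with $h = 3$ and $A_1 = A_2 = A_3 = A^*$. To exclude the bad triples we take the polynomial
\[p(x_1,x_2,x_3) = (x_1 + x_2)(x_1 + x_3)(x_2 + x_3),\]
which has degree $3$. This gives
\[S = \{a_1 + a_2 + a_3 : a_i \in A^*,\ a_i + a_j \neq 0 \text{ for } i \neq j\} \subseteq 3_{\pm}A.\]
With $|A_i| = 2k$ we would get $K = 6k - 3 - 3 = 6k - 6$, and hence the bound $|S| \geq 6k - 5$, provided the coefficient of $x_1^{2k-1}x_2^{2k-1}x_3^{2k-1}$ in
\[(x_1 + x_2 + x_3)^{6k - 6}(x_1 + x_2)(x_1 + x_3)(x_2 + x_3)\]
is nonzero in $\mathbb{F}$.

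The key step is to compute this coefficient, call it $C_k$, explicitly. I would expand $(x_1+x_2)(x_1+x_3)(x_2+x_3)$ as
\[(x_1+x_2+x_3)(x_1x_2 + x_1x_3 + x_2x_3) - x_1x_2x_3 .\]
This reduces $C_k$ to a combination of multinomial coefficients:
\[C_k = 3\binom{6k-5}{2k-2,\,2k-2,\,2k-1} - \binom{6k-6}{2k-2,\,2k-2,\,2k-2}.\]
Factoring out $\frac{(6k-6)!}{((2k-2)!)^3}$ and simplifying via $(6k-5)/(2k-1)$, the bracket becomes
\[\frac{3(6k-5)}{2k-1} - 1 = \frac{16k-14}{2k-1} = \frac{2(8k-7)}{2k-1}.\]
So, up to sign, $C_k$ equals $\frac{2(8k-7)\,(6k-6)!}{(2k-1)!\,((2k-2)!)^2}$.

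It then remains to check that this is nonzero modulo $p = p(\mathbb{F})$ (trivially so in characteristic $0$). The factorial $(6k-6)!$ is nonzero because $p > 6k - 6$, and the denominator factorials involve only integers below $p$. The factor $8k - 7$ is nonzero modulo $p$: since $p > 6k - 6$ we have $8k - 7 < 2p$ for $k \geq 2$, so the only multiple of $p$ it could equal is $p$ itself, and that is excluded by hypothesis. The factor $2$ is harmless because $p > 6k - 6 \geq 6$ excludes characteristic $2$. This explains the hypotheses exactly.

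The main obstacle is the bookkeeping in this coefficient computation. I would redo it by brute force, summing the $3 + 3 + 1 + 1$ monomials of the product $(x_1+x_2)(x_1+x_3)(x_2+x_3) = \sum_{\mathrm{sym}} x_1^2x_2 + 2x_1x_2x_3$ against the multinomials, to confirm the factor $8k - 7$. If that check succeeds, Theorem \ref{thm:A_2} yields $|3_{\pm}A| \geq |S| \geq 6k - 5$.
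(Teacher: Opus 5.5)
Your proposal is correct and follows the paper's proof. It makes the same application of Theorem \ref{thm:A_2}, with $A_1=A_2=A_3=A\cup(-A)$, $p=(x_1+x_2)(x_1+x_3)(x_2+x_3)$ and $K=6k-6$, and it reaches the same coefficient $\frac{2(8k-7)(6k-6)!}{(2k-1)!\,((2k-2)!)^2}$ (exactly, not merely up to sign). The differences are cosmetic: you evaluate the coefficient via the identity $(x_1+x_2)(x_1+x_3)(x_2+x_3)=(x_1+x_2+x_3)(x_1x_2+x_1x_3+x_2x_3)-x_1x_2x_3$ rather than by expanding into monomials, and you justify the nonvanishing modulo $p(\mathbb{F})$ (using $8k-7<2p(\mathbb{F})$ and $p(\mathbb{F})\ge 7$), which the paper only asserts.
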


\begin{proof}
	Let $A^* = A\cup (- A)$. Then $|A^*| = 2k$. Let $h = 3$, $A_1 = A_2 = A_3 = A^*$ and $p(x_1, x_2, x_3) = (x_1 + x_2)(x_1 + x_3)(x_2 + x_3)$. We apply Theorem \ref{thm:A_2} for $h = 3$. In this case, $K = 6k - 6$ and 
	\[3_{\pm}A \supseteq \{a_1 + a_2 + a_3 : a_1, a_2, a_3 \in A^* ~\text{with}~ p(a_1, a_2, a_3) \neq 0\}.\]
	Now we have 	
	\begin{align*}  
		(x_1 &+ x_2 + x_3)^K p(x_1, x_2, x_3)\\
		&= (x_1 + x_2 + x_3)^{6k - 6}(x_1 + x_2)(x_1 + x_3)(x_2 + x_3)\\
		&= (x_1 + x_2 + x_3)^{6k - 6}(x_1x_3^2 + x_2x_3^2 + x_1x_2^2 + x_2^2x_3 + x_1^2x_2 + x_1^2x_3 + 2x_1x_2x_3),
	\end{align*}
	and so
	\begin{multline*}
		(x_1 + x_2 + x_3)^K p(x_1, x_2, x_3) \\
		=(x_1 + x_2 + x_3)^{6k - 6}(x_1x_3^2 + x_2x_3^2 + x_1x_2^2 + x_2^2x_3 + x_1^2x_2 + x_1^2x_3)\\
		+ 2(x_1 + x_2 + x_3)^{6k - 6}(x_1x_2x_3).
	\end{multline*}
	Hence coefficient of $x_1^{2k - 1}x_2^{2k - 1}x_3^{2k - 1}$ in the polynomial $(x_1 + x_2 + x_3)^K p(x_1, x_2, x_3)$ is
	\[\frac{2(6k - 6)!}{(2k - 2)!^3} + \frac{6(6k - 6)!}{(2k - 1)!(2k - 2)!(2k - 3)!} = \frac{2(8k - 7)(6k - 6)!}{(2k - 1)!(2k - 2)!^2},\] 
	which is nonzero modulo $p(\mathbb{F})$ when $p(\mathbb{F}) > 6k - 6$ and $p(\mathbb{F}) \neq 8k - 7$. Therefore, it follows from Theorem \ref{thm:A_2} that
	\[|3_{\pm}A| \geq K + 1 = 6k - 5.\]
	This completes the proof.
\end{proof}

\begin{theorem}\label{ss-asym-prime-thm:11}
	Let $A$ be a nonempty subset of a field $\mathbb{F}$ with $k \geq 2$ elements, and let $A \cap (- A) = \varnothing$. Suppose that one of the following conditions is satisfied:
	\begin{enumerate}
		\item $32k^2 - 64k + 31 < p(\mathbb{F})$,
		\item $8k - 10 < p(\mathbb{F}) < 32k^2 - 64k + 31$,
		\item $8k - 10 < p(\mathbb{F})$ and $p(\mathbb{F}) \equiv \pm 3 \pmod 8$.
	\end{enumerate}
	Then
	\begin{equation}\label{four-fold-signed-sum-eq1}
		|4_{\pm}A| \geq 8k -9.
	\end{equation} 
	If $l$ is the least positive integer such that $8k - 10 - 4l < p(\mathbb{F}) < 8k -10 - 4(l-1)$ and if $p(\mathbb{F}) \equiv \pm 3 \pmod 8$, then
	\begin{equation}\label{four-fold-signed-sum-eq2}
		|4_{\pm}A| \geq 8k - 9 - 4l.
	\end{equation}
\end{theorem}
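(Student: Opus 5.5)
The plan is to follow the template of Theorems \ref{ss-asym-prime-thm:9} and \ref{ss-asym-prime-thm:10}: apply Theorem \ref{thm:A_2} with four variables and a polynomial that forbids cancellations. Put $A^* = A \cup (-A)$. Since $A \cap (-A) = \varnothing$, we have $|A^*| = 2k$ and $0 \notin A^*$. In Theorem \ref{thm:A_2}, take $h = 4$ and $A_1 = A_2 = A_3 = A_4 = A^*$, with
\[p(x_1, x_2, x_3, x_4) = \prod_{1 \le i < j \le 4} (x_i + x_j),\]
which has degree $6$. Then $K = 8k - 4 - 6 = 8k - 10$.

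The first step is to check that $S \subseteq 4_{\pm}A$. Take $a_1 + a_2 + a_3 + a_4$ with every $a_i \in A^*$ and $a_i + a_j \neq 0$ for all $i < j$. Each $a_i$ equals $\pm b$ for some $b \in A$, and no element is paired with its own negative. So, grouping equal terms, the sum has the form $\sum \lambda_b b$ with $\sum |\lambda_b| = 4$. Repeated entries $a_i = a_j$ are harmless: $2a_i \neq 0$ holds because $a_i \neq 0$ and $p(\mathbb{F}) \neq 2$ in every case considered. Hence $|4_{\pm}A| \ge |S| \ge K + 1 = 8k - 9$, provided the coefficient $c_k$ of $(x_1x_2x_3x_4)^{2k-1}$ in $(x_1 + \cdots + x_4)^{8k-10}\, p$ is nonzero in $\mathbb{F}$.

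The second step, and the main obstacle, is computing $c_k$. Expand $p$ into monomials $x^{\beta}$ with $|\beta| = 6$; by symmetry these fall into a few orbit types: $(2,2,1,1)$, $(3,1,1,1)$, $(2,2,2,0)$, $(3,2,1,0)$, $(3,3,0,0)$, and so on. The coefficient of each type can be read off from the product of the six linear factors. Each monomial $x^{\beta}$ contributes the multinomial coefficient $(8k-10)!/\prod_i (2k-1-\beta_i)!$. Collecting terms over the common denominator $(2k-1)!^{a}(2k-2)!^{b}\cdots$, I expect
\[c_k = \frac{C\,(8k-10)!}{\text{(product of factorials below } 2k)}\,(32k^2 - 64k + 31)\]
for a small constant $C$ composed of powers of $2$. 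The factorial part is nonzero once $p(\mathbb{F}) > 8k - 10$. So $c_k \neq 0$ exactly when $p(\mathbb{F})$ does not divide $32k^2 - 64k + 31$. This accounts for conditions (1) and (2): a prime exceeding $32k^2-64k+31$, or lying strictly between $8k-10$ and it, cannot divide this positive integer unless it equals it. For (3), note that $32k^2 - 64k + 31 = 2(4k-4)^2 - 1$. If $p \mid 2(4k-4)^2 - 1$, then $2$ is a quadratic residue modulo $p$, which forces $p \equiv \pm 1 \pmod 8$. Hence $p \equiv \pm 3 \pmod 8$ rules out divisibility. If the explicit quadratic turns out slightly different, the same quadratic-residue argument should still apply to its discriminant.

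For \eqref{four-fold-signed-sum-eq2}, I would argue as in the second half of Theorem \ref{ss-asym-prime-thm:9}. Replace $A^*$ by a subset $A' \subseteq A^*$ with $|A'| = 2k - l$, taking all four sets equal to $A'$. Then $K = 8k - 10 - 4l$, and the minimality of $l$ gives $p(\mathbb{F}) > K$. The same computation with $k$ replaced by $k - l/2$ gives the coefficient; formally $2k$ becomes $2k - l$. The quadratic factor becomes $2(4k - 4 - 2l)^2 - 1$, which is still of the form $2t^2 - 1$, so $p \equiv \pm 3 \pmod 8$ again guarantees nonvanishing. The conclusion is $|4_{\pm}A| \ge |S| \ge 8k - 9 - 4l$. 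The real work is in the bookkeeping of the coefficient expansion, and I would organize it by symmetric orbit types to keep it manageable.
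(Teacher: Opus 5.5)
Your plan is the paper's own argument. It uses the same $A^*$, the same polynomial $\prod_{i<j}(x_i+x_j)$ with $K=8k-10$, the same coefficient, the same quadratic-residue argument, and the same passage to a subset $A'$ of size $2k-l$. Your predicted shape of the coefficient is correct, with $C=8$. The monomials of $\prod_{i<j}(x_i+x_j)$ have coefficient $1$ on type $(3,2,1,0)$ (24 monomials), $2$ on type $(3,1,1,1)$, $2$ on type $(2,2,2,0)$, $4$ on type $(2,2,1,1)$, and $0$ on type $(3,3,0,0)$. With $m=2k-1$, the numerator over the common denominator $m!\,{(m-1)!}^2(m-2)!$ is
$$8(m-1)^2+24m(m-1)+8m(m-2)+24(m-1)(m-2)=8(8m^2-16m+7)=8(32k^2-64k+31).$$
This computation is the whole content of the proof, so it should be carried out, not left as an expectation.

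\textbf{The gap is condition (2).} You claim that a prime strictly between $8k-10$ and $Q=32k^2-64k+31$ cannot divide $Q$. That is false: only primes exceeding $Q$ are excluded.
\begin{itemize}
\item For $k=4$ you get $Q=287=7\cdot 41$ with $22<41<287$. So for $p(\mathbb{F})=41$ the coefficient is $0$ in $\mathbb{F}$, and Theorem \ref{thm:A_2} yields nothing.
\item Similarly, $k=5$ gives $Q=511=7\cdot 73$, and $k=6$ gives $Q=799=17\cdot 47$.
\end{itemize}
These primes are $\equiv \pm 1 \pmod 8$, consistent with your quadratic-residue observation. The paper's Case 2 simply asserts nonvanishing at this point, so it has the same hole.

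What the argument actually proves is $|4_{\pm}A|\ge 8k-9$ whenever $p(\mathbb{F})>8k-10$ and $p(\mathbb{F})$ does not divide $2(4k-4)^2-1$. This covers conditions (1) and (3). Condition (2) has to be replaced by this non-divisibility hypothesis, or handled by a different argument.

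Your treatment of the second bound is fine. The inequality $p(\mathbb{F})>K=8k-10-4l$ comes directly from the defining inequality for $l$, not from its minimality. The quadratic factor becomes $2(4k-2l-4)^2-1$, to which the same residue argument applies.
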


\begin{proof}
	First we prove \eqref{four-fold-signed-sum-eq1}. Let $A^* = A \cup (- A)$. Then $|A^*| = 2k$. We apply Theorem \ref{thm:A_2} for $h = 4$, $A_1 = A_2 = A_3 = A_4 = A^*$ and the polynomial
	\[p(x_1, x_2, x_3, x_4) = \prod\limits_{1 \leq i < j \leq 4}(x_i + x_j).\]
	In this case, $K = 8k - 10$ and 
	\[4_{\pm}A \supseteq \{a_1 + a_2 + a_3 + a_4 : a_1, a_2, a_3 , a_4 \in A^*, p(a_1, a_2, a_3, a_4) \neq 0\}.\]
	Clearly,
	\begin{align*}
		p(x_1, x_2, x_3, x_4) & =\prod\limits_{1 \leq i < j \leq 4}(x_i + x_j)\\
		& = x_2x_3^2x_4^3 + x_1x_3^2x_4^3 + x_2^2x_3x_4^3 + 2x_1x_2x_3x_4^3 + x_1^2x_3x_4^3 + x_1x_2^2x_4^3\\
		&+ x_1^2x_2x_4^3 + x_2x_3^3x_4^2 + x_1x_3^3x_4^2 + 2x_2^2x_3^2x_4^2 + 4x_1x_2x_3^2x_4^2 + 2x_1^2x_3^2x_4^2\\
		&+ x_2^3x_3x_4^2 + 4x_1x_2^2x_3x_4^2 + 4x_1^2x_2x_3x_4^2 + x_1^3x_3x_4^2 + x_1x_2^3x_4^2 + 2x_1^2x_2^2x_4^2\\
		&+ x_1^3x_2x_4^2 + x_2^2x_3^3x_4 + 2x_1x_2x_3^3x_4 + x_1^2x_3^3x_4 + x_2^3x_3^2x_4 + 4x_1x_2^2x_3^2x_4\\
		&+ 4x_1^2x_2x_3^2x_4 + 2x_1^3x_2x_3x_4 + x_1^2x_2^3x_4 + x_1x_2^2x_3^3 + x_1^2x_2x_3^3 + x_1x_2^3x_3^2\\
		&+ 2x_1^2x_2^2x_3^2 + x_1^3x_2x_3^2 + x_1^2x_2^3x_3 + x_1^3x_2^2x_3 + x_1^3x_2^2x_4 + x_1^3x_3^2x_4\\
		&+ 2x_1x_2^3x_3x_4 + 4x_1^2x_2^2x_3x_4.
	\end{align*}
	Thus the polynomial $p(x_1, x_2, x_3, x_4) (x_1 + x_2 + x_3 + x_4)^{8k-10}$ is the sum of several terms, where each term is a product of a monomial and the polynomial $(x_1 + x_2 + x_3 + x_4)^{8k-10}$. Now to compute the coefficient of $x_1^{2k - 1}x_2^{2k - 1}x_3^{2k - 1}x_4^{2k - 1}$ in the polynomial $(x_1 + x_2 + x_3 + x_4)^K p(x_1, x_2, x_3, x_4)$, we compute the coefficient of each such term separately and collect the similar coefficients together. In this way, we get the following coefficient of $x_1^{2k - 1}x_2^{2k - 1}x_3^{2k - 1}x_4^{2k - 1}$ in $p(x_1, x_2, x_3, x_4) (x_1 + x_2 + x_3 + x_4)^{8k-10}$:
	\begin{multline*}
		\frac{8(8k - 10)!}{(2k - 1)!{(2k - 3)!}^3}  + \frac{24(8k - 10)!}{{(2k - 2)!}^2{(2k - 3)!}^2} + \frac{8(8k - 10)!}{{(2k - 2)!}^3(2k - 4)!} \\ 
		+ \frac{24(8k - 10)!}{(2k - 1)!(2k - 2)!(2k - 3)!(2k - 4)!}.
	\end{multline*}
	By simplifying the above expression, we get the required coefficient as the following expression:
	\begin{equation}\label{monomial-coef}
		\frac{8(8k - 10)!(32k^2 - 64k + 31)}{(2k - 1)!{(2k - 2)!}^2(2k - 3)!}.
	\end{equation}
	Note that $8k - 10 \leq 32k^2 - 64k + 31$ for all $k \geq 2$. Now we consider the following cases:
	
	\noindent {\textbf{Case 1}} ($32k^2 - 64k + 31 < p(\mathbb{F})$).
	In this case, the coefficient \eqref{monomial-coef} is nonzero modulo $p(\mathbb{F})$. Therefore, it follows from Theorem \ref{thm:A_2} that
	\[|4_{\pm}A| \geq K + 1 = 8k - 9.\]
	
	\noindent {\textbf{Case 2}} ($8k - 10 < p(\mathbb{F}) < 32k^2 - 64k + 31$). In this case also, the coefficient \eqref{monomial-coef} is nonzero modulo $p(\mathbb{F})$. Therefore, it follows from Theorem \ref{thm:A_2} that
	\[|4_{\pm}A| \geq K + 1 = 8k - 9.\]
	
	\noindent {\textbf{Case 3.}} ($8k - 10 < p(\mathbb{F})$ and $p(\mathbb{F}) \equiv {\pm} 3 \pmod{8}$). In this case, we show that $32k^2 - 64k + 31 \not \equiv 0 \pmod {p(\mathbb{F})}$. Suppose that $32k^2 - 64k + 31 \equiv 0 \pmod {p(\mathbb{F})}$. Since $32k^2 - 64k + 31 = 2(4k - 4)^2 - 1$, it follows that 
	\[2t^2 \equiv 1 \pmod{p(\mathbb{F})},\]
	where $t = (4k - 4)^2$. Then $t \neq 0 \pmod{p(\mathbb{F})}$, and so
	\[2 \equiv t'^2 \pmod{p(\mathbb{F})},\]
	where $tt' \equiv 1 \pmod{p(\mathbb{F})}$. Thus $2$ is a quadratic residue modulo $p(\mathbb{F})$ which implies that
	\[p(\mathbb{F}) \equiv \pm 1 \pmod{8},\]
	which is a contradiction. Thus, we have
	\[32k^2 - 64k + 31 \not \equiv 0 \pmod {p(\mathbb{F})}.\]
	Hence it follows from Theorem \ref{thm:A_2} that
	\[|4_{\pm} A| \geq K + 1 = 8k - 9.\]
	The above three cases implies the inequality \eqref{four-fold-signed-sum-eq1}.
	
	To prove \eqref{four-fold-signed-sum-eq2}, let $A'$ be subsets of $A \cup (-A)$ with $|A'| = 2k - l$. Let $A_1 = A_2 = A_3 = A_4 = A'$. We apply Theorem \ref{thm:A_2} for $h = 4$, $A_1 = A_2 = A_3 = A_4 = A^*$ and the same polynomial
	\[p(x_1, x_2, x_3, x_4) = \prod\limits_{1 \leq i < j \leq 4}(x_i + x_j).\]
	In this case, $K = 8k - 10 - 4l$. By the similar method used to compute the coefficient \eqref{monomial-coef}, we compute the coefficient of $x_1^{2k - 1 -l}x_2^{2k - 1-l}x_3^{2k - 1-l}x_4^{2k - 1-l}$ in the polynomial $(x_1 + x_2 + x_3 + x_4)^K p(x_1, x_2, x_3, x_4)$ which is equal to the following expression:
	\begin{equation}\label{monomial-coef2}
		\frac{8(8k - 10 - 4l)!(32k^2 - 64k - 32kl + 32l + 8l^2 + 31)}{(2k - 1 - l)!{(2k - 2 - l)!}^2(2k - 3 - l)!}.
	\end{equation}
	Note that
	\begin{align*}
		32k^2 - 64k - 32kl + 32l + 8l^2 + 32 = 2(4k - 2l - 4)^2.
	\end{align*}
	Hence an argument similar to that in Case $3$ implies that
	\[32k^2 - 64k - 32kl + 32l + 8l^2 + 31 \not \equiv 0 \pmod 8.\] 
	Hence it follows from Theorem \ref{thm:A_2} that
	\[|4_{\pm}A| \geq K + 1 = 8k - 4l - 9.\]
	This completes the proof.
\end{proof}

\section{Restricted signed sumsets in groups and fields}\label{sec-res-signed-sum-group}
In this section, we prove lower bounds on the size of the restricted $h$-fold signed sumset $h_{\pm}^\wedge A$ in a arbitrary field $\mathbb{F}$, and the restricted $[0, h]$-fold signed sumset $[0, h]_{\pm}^\wedge A$ in additive abelian groups.

\subsection{Restricted $h$-fold signed sumset $h_{\pm}^{\wedge} A$ in fields}
In this subsection, we prove some results for the restricted signed sumsets in a field $\mathbb{F}$. For the proof, we will need the following results. The first theorem is due to Dias da Silva and Hamidoune \cite{dias}. The theorem was later reproved by Alon, Nathanson and Ruzsa using the polynomial method \cite{alon1, alon2}.
\begin{theorem}\label{alon-nath-ruzsa}
	Let $h$ and $m$ be integers such that $2 \leq h \leq m$. Let $A$ be a subset of a field $\mathbb{F}$ with $m$ elements. Then
	\[|h ^\wedge A| \geq \min (p(\mathbb{F}), hm - h^2 + 1).\]	
\end{theorem}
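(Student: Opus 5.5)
The plan is to follow the Alon--Nathanson--Ruzsa polynomial-method proof, using Theorem \ref{thm:A_2} with a Vandermonde-type polynomial. Throughout, write $p = p(\mathbb{F})$. We choose subsets $A_1, \ldots, A_h \subseteq A$ with $|A_i| = b_i$, and take
\[P(x_1, \ldots, x_h) = \prod_{1 \leq i < j \leq h} (x_j - x_i),\]
which has degree $\binom{h}{2}$. Any tuple $(a_1, \ldots, a_h)$ with $a_i \in A_i$ and $P(a_1, \ldots, a_h) \neq 0$ has pairwise distinct entries. Hence the set $S$ of Theorem \ref{thm:A_2} is contained in $h^{\wedge}A$, and it suffices to show $|S| \geq \min(p, hm - h^2 + 1)$.

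\textbf{Generic case, $p > hm - h^2$ (including $p = \infty$).} Take $b_i = m - i + 1$ for $i \in [1, h]$, and let each $A_i$ be any subset of $A$ of that size. Then
\[K = \sum_i b_i - h - \binom{h}{2} = hm - \binom{h}{2} - h - \binom{h}{2} = hm - h^2 .\]
The key computation is the coefficient of $\prod_i x_i^{b_i - 1}$ in $(x_1 + \cdots + x_h)^K P$. Expanding $P$ as the Vandermonde determinant $\det\bigl(x_i^{j-1}\bigr)$ and collecting multinomial coefficients, this coefficient equals
\[\frac{K!}{\prod_i (b_i - 1)!} \prod_{i < j} (b_i - b_j).\]
This is the standard identity from Alon--Nathanson--Ruzsa \cite{alon2}, which I would verify by evaluating the determinant $\det\bigl(1/(b_i - j)!\bigr)$. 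Since $K < p$ and all $0 < b_i - b_j < h \leq K < p$, every factor is nonzero in $\mathbb{F}$. Theorem \ref{thm:A_2} then gives $|h^{\wedge}A| \geq K + 1 = hm - h^2 + 1$.

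\textbf{Case $p \leq hm - h^2$.} Here we shrink the sets so that $K$ becomes exactly $p - 1$. We need distinct integers $m \geq b_1 > b_2 > \cdots > b_h \geq 1$ with
\[\sum_i b_i = p - 1 + h + \binom{h}{2}.\]
Such a choice exists when $p \geq h$: starting from $(m, m - 1, \ldots, m - h + 1)$, whose sum is too large, decrease entries one unit at a time while keeping them strictly decreasing. The sum can go down to $\binom{h+1}{2}$, and the target satisfies $p - 1 + h + \binom{h}{2} \geq \binom{h+1}{2}$ because $p \geq 1$. With this choice $K = p - 1 < p$, and the differences $b_i - b_j$ lie in $(0, p)$ because all $b_i \leq p - 1 + h$ and the spread is controlled. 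So the coefficient is again nonzero, and $|h^{\wedge}A| \geq p$.

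\textbf{Main obstacle.} I expect two delicate points. The first is checking that the $b_i - b_j$ stay nonzero mod $p$ after shrinking; the choice can always keep $b_1 - b_h$ small, namely $b$'s consecutive or nearly so, so that $b_1 - b_h \leq h$. The second is the degenerate regime $p < h$, where no $h$ distinct residues exist and the Vandermonde coefficient necessarily vanishes. For that regime I would argue separately, using the fact that $h^{\wedge}A$ is a translate of $(m - h)^{\wedge}A$ (namely $\sigma(A) - (m - h)^{\wedge}A$, with $\sigma(A) = \sum_{a \in A} a$), or by a direct Cauchy--Davenport-type step in the prime field. I would try to reduce to a smaller $h'$ for which $p \geq h'$ and the first two cases apply.
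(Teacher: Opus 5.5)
The paper gives no proof of this statement; it quotes it from Dias da Silva--Hamidoune and Alon--Nathanson--Ruzsa, and your plan is the Alon--Nathanson--Ruzsa argument. For $p(\mathbb{F})>h$ the plan works, provided you make the nearly consecutive choice explicit: take $\{b_1,\dots,b_h\}=\{c,\dots,c+h\}\setminus\{c+j\}$ with $c\ge 1$ and $0\le j\le h$. This choice realizes every admissible target sum with $b_1\le m$ and $b_1-b_h\le h<p$. The case $m=h$, where $K=0$ and your inequality $h\le K$ fails, is trivial. In $\mathbb{Z}_p$ nothing more is needed, since there $h\le m\le p$.

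The gap is the regime $p(\mathbb{F})\le h<m$. It occurs for an arbitrary field, e.g.\ $A\subseteq\mathbb{F}_{p^2}$ with $|A|>p$, and there the claimed bound is $p$. Your nonvanishing argument needs the $b_i$ to be pairwise incongruent modulo $p$. For $p<h$ this is impossible by pigeonhole. It is also impossible for $p=h$: a complete residue system has sum $\equiv\binom{p}{2}$, while the required sum $p-1+h+\binom{h}{2}$ is $\equiv\binom{p}{2}-1 \pmod p$. So your claim that $b_1-b_h\le h$ suffices whenever $p\ge h$ fails at $p=h$. For $p<h$ your suggestions do not close the case. The identity $h^{\wedge}A=\sigma(A)-(m-h)^{\wedge}A$ only trades $h$ for $m-h$. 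A Cauchy--Davenport step in the prime field is unavailable because $A$ need not lie there.

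The missing observation is that the differences $b_i-b_j$ do not matter. Your belief that the coefficient must vanish for $p<h$ is false: for $p=2$, $h=3$, $(b_1,b_2,b_3)=(4,2,1)$ one gets $K=1$ and coefficient $\frac{1!}{3!\,1!\,0!}\cdot 2\cdot 3\cdot 1=1$. In general, for $b_1>\cdots>b_h\ge 1$, the Frobenius formula identifies $\frac{K!}{\prod_i (b_i-1)!}\prod_{i<j}(b_i-b_j)$ with $f^{\lambda}$. Here $f^{\lambda}$ is the number of standard Young tableaux of shape $\lambda=(b_1-h,\,b_2-h+1,\,\dots,\,b_h-1)\vdash K$, and $f^{\lambda}$ divides $K!$ by the hook length formula. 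Hence the coefficient is nonzero in $\mathbb{F}$ whenever $K<p$, and your construction with $K=p-1$ works for every $p$.

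Alternatively, you can reduce to a smaller $h$. For $p\le h<m$, fix distinct $a_1,\dots,a_{h-p+1}\in A$ and put $A'=A\setminus\{a_1,\dots,a_{h-p+1}\}$, so $|A'|=m-h+p-1$. Then $h^{\wedge}A\supseteq(a_1+\cdots+a_{h-p+1})+(p-1)^{\wedge}A'$. Apply the settled case $h'=p-1<p$, or use $|1^{\wedge}A'|=|A'|\ge 2$ when $p=2$. This gives $|h^{\wedge}A|\ge\min(p,(p-1)(m-h)+1)=p$.
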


The next theorem is due to Liu and Sun \cite{liu2002}.

\begin{theorem}[{\cite[Liu and Sun]{liu2002}}]\label{liu-sun-poly-thm-2002}
	Let $k$, $m$ and $h$ be positive integers such that $K = (k - 1)h - (m + 1) \frac{h(h-1)}{2}$ is nonnegative. Let $p(\mathbb{F}) \geq \max (K, m, h - 1)$. Let $A_1, \ldots, A_h$ be subsets of $\mathbb{F}$ such that $|A_i| \geq k - h + i$ for $i \in [1, h]$. Let $P_1, \ldots, P_h \in \mathbb{F}[t]$ be monic polynomials of degree $m$. Let
	\[C = \{a_1 + \cdots + a_h : a_{i} \in A_i, P_i(a_i) \neq P_j(a_j) ~\text{if}~ i\neq j \}.\]
	Then
	\[|C| \geq K + 1.\] 	
\end{theorem}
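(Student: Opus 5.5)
The plan is to run the Alon--Nathanson--Ruzsa polynomial method, that is, Theorem~\ref{thm:A_2}, with a Vandermonde-type polynomial built from the $P_i$.

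\emph{Reduction and setup.} First shrink each $A_i$ so that $|A_i| = k - h + i$ exactly. Shrinking can only make $C$ smaller, so a lower bound for the smaller sets suffices. Put
\[
q(x_1,\ldots,x_h)=\prod_{1\le i<j\le h}\bigl(P_j(x_j)-P_i(x_i)\bigr).
\]
Then $\deg q = m\binom{h}{2}$, and $q(a_1,\ldots,a_h)\neq 0$ exactly when the $P_i(a_i)$ are pairwise distinct. Hence the set $S$ of Theorem~\ref{thm:A_2} built from $q$ is exactly $C$.

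\emph{Degree count.} We have
\[
\sum_i |A_i| = hk-\binom{h}{2},
\qquad
K+h+\deg q=(k-1)h-(m+1)\binom h2+h+m\binom h2=hk-\binom h2,
\]
so the hypothesis $\sum|A_i|=K+h+\deg q$ holds. It remains to show that the coefficient of $\prod_i x_i^{c_i}$, with $c_i=k-h+i-1$, in $(x_1+\cdots+x_h)^K q$ is nonzero in $\mathbb F$.

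\emph{Reducing the coefficient to a determinant.} Since the target monomial has total degree $K+\deg q$, only the top homogeneous part of $q$ contributes. Because the $P_i$ are monic of degree $m$, that top part is $\prod_{i<j}(x_j^m-x_i^m)$. By the Vandermonde expansion this equals $\sum_{\sigma\in S_h}\mathrm{sgn}(\sigma)\prod_i x_i^{m\sigma(i)}$, where $\sigma$ permutes $\{0,\ldots,h-1\}$. Expanding $(x_1+\cdots+x_h)^K$ by the multinomial theorem, the coefficient becomes
\[
K!\,\det\Bigl[\frac{1}{(c_i-mj)!}\Bigr]_{1\le i\le h,\;0\le j\le h-1},
\]
with the convention $1/n!=0$ for $n<0$. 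Factoring out $\prod_i 1/c_i!$ turns this into $K!\prod_i c_i!^{-1}\det[(c_i)_{mj}]$, where $(c)_{r}=c(c-1)\cdots(c-r+1)$ is the falling factorial.

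\emph{Evaluating the determinant (main obstacle).} The key step is a closed product formula for $\det[(c_i)_{mj}]$ when the $c_i=c_1+(i-1)$ are consecutive integers. My first attempt would be to regard $(c)_{mj}$ as a polynomial in $c$ and perform row operations, i.e.\ take successive finite differences in $i$, so that row $i$ becomes $\Delta^{i-1}(c)_{mj}$ evaluated at $c_1$. Since $\Delta^{r}(c)_{n}=(n)_r\,(c)_{n-r}$, the matrix becomes
\[
\bigl[(mj)_{i-1}\,(c_1)_{mj-i+1}\bigr]_{i,j}.
\]
I then expect to pull out common factors and reduce to a Vandermonde-like determinant in the quantities $mj$. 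This should produce a product of factors of the form $m(j-j')$, factorials of numbers at most $h-1$, and ratios of factorials of numbers at most $K$ or the $c_i$.

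Finally, I would check that every prime factor appearing in the numerator is at most $\max(K,m,h-1)$. Under the hypothesis on $p(\mathbb F)$ (or when $\mathbb F$ has characteristic $0$), the coefficient is then nonzero, and Theorem~\ref{thm:A_2} gives $|C|\ge K+1$. This determinant evaluation is essentially the heart of Liu and Sun's argument, and it is the part I expect to require the most care. If the finite-difference route gets messy, my fallback is to evaluate the determinant via the Lindstr\"om--Gessel--Viennot / Dyson-type constant term identities.
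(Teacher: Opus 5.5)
The paper does not prove this statement (it is quoted from Liu and Sun), so your proposal has to stand on its own. Your setup is correct: the shrinking step, the choice $q=\prod_{i<j}(P_j(x_j)-P_i(x_i))$ with $S=C$, the degree count, the passage to the top form $\prod_{i<j}(x_j^m-x_i^m)$, and the expression $K!\det[1/(c_i-mj)!]$ for the coefficient.

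\textbf{The gap.} The gap is the determinant step you defer, and it is not merely unfinished: under the hypotheses as written, no nonvanishing result is possible.
\begin{enumerate}
\item Since $c_i\le c_h=k-1$, if $k-1<m(h-1)$ then every entry of the column $j=h-1$ is $1/(\text{negative})!=0$, so the coefficient is $0$. The hypothesis $K\ge 0$ only forces $k-1\ge (m+1)(h-1)/2$, which does not exclude this range. The statement itself fails there. Take $h=2$, $m=3$, $k=3$ (so $K=0$), $A_1=\{1,2\}$, $A_2=\{1,2,4\}$ in $\mathbb{F}_7$ (or $\{1,\omega\}$, $\{1,\omega,\omega^2\}$ in $\mathbb{C}$), and $P_1=P_2=t^3$. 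All values $P_i(a_i)$ equal $1$, so $C=\varnothing$.
\item The hypothesis $p(\mathbb{F})\ge\max(K,m,h-1)$ allows $p(\mathbb{F})=K$, where $K!=0$ in $\mathbb{F}$, and the statement fails again. With $h=2$, $m=2$, $k=5$ we get $K=5$, yet in $\mathbb{Z}_5$ one has $|C|\le 5$.
\end{enumerate}
So the theorem needs the additional hypothesis $k>m(h-1)$ and the strict inequality $p(\mathbb{F})>\max(K,m,h-1)$. This is what the argument uses, and presumably what Liu and Sun's original states. Your final prime-factor check only yields nonvanishing with the strict inequality.

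\textbf{Evaluating the determinant.} Under those hypotheses the determinant is easy, but it is better to work from the top point $c_h=k-1$ than from $c_1$. Your forward-difference matrix $[(mj)_{i-1}(c_1)_{mj-i+1}]$ does not split into row and column factors. Instead, put $y_j=k-1-mj\ge 0$ and multiply column $j$ by $y_j!$. Since $c_i-mj=y_j-(h-i)$, the entry becomes the falling factorial $(y_j)_{h-i}$, a monic polynomial of degree $h-i$ in $y_j$. (The convention $1/n!=0$ for $n<0$ matches $(y)_r=0$ for $0\le y<r$.) Row reduction then gives the Vandermonde determinant
\[
\det[y_j^{h-i}]=\pm\prod_{j<j'}(y_j-y_{j'})=\pm m^{\binom{h}{2}}\prod_{j=0}^{h-1}j!,
\]
so the coefficient equals
\[
\pm\frac{K!\,m^{\binom{h}{2}}\prod_{j=0}^{h-1}j!}{\prod_{j=0}^{h-1}(k-1-mj)!}.
\]
This is an integer whose numerator has no prime factor exceeding $\max(K,m,h-1)$. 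So its $p$-adic valuation is $0$ for $p>\max(K,m,h-1)$, it is nonzero in $\mathbb{F}$, and Theorem~\ref{thm:A_2} yields $|C|\ge K+1$.

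In your finite-difference language, take backward differences at $k-1$ instead. Since $\nabla^r(c)_n=(n)_r(c-r)_{n-r}$, the matrix becomes $\mathrm{diag}\bigl((k-1-r)!\bigr)\,[(mj)_r]\,\mathrm{diag}\bigl(1/(k-1-mj)!\bigr)$, which is a Vandermonde matrix in the numbers $mj$.
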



%

\begin{proof}[Proof of Theorem \ref{rss-field-thm:3}]
	If $\min(p(\mathbb{F}), \theta + 1) \leq hk - h^2 + 1$, then since $h^{\wedge}A \subseteq h_{\pm}^{\wedge} A$, the result follows trivially from Theorem \ref{alon-nath-ruzsa}. Now assume that $hk - h^2 + 1 < \min (p(\mathbb{F}), \theta + 1)$. We consider the following two cases:
	
	\noindent {\textbf{Case 1}} ($hk - h^2 + 1  < \theta + 1 \leq p(\mathbb{F})$). Let 
	\[A_i = A \cup (- A)\]
	for each $i \in [1, h]$. Let $P_1(t), \ldots, P_h(t) \subseteq \mathbb{F}[t]$ such that
	\[P_1(t) = \cdots = P_h(t) = t^2.\]
	Then the value of $K$ defined in Theorem \ref{liu-sun-poly-thm-2002} is given by
	\[K= h|A \cup (- A)| - \frac{h(3h - 1)}{2} = \theta.\]
	Let 
	\[S = \{x_1 + \cdots + x_h : x_{i} \in A \cup (- A), P_i(a_i) \neq P_j(a_j) ~\text{if}~ i \neq j \}.\]
	We show that $S \subseteq h_{\pm}^{\wedge} A$. Let $x_1 + \cdots + x_h \in S$. Then
	\[x_i \in A \cup (- A)\]
	for each $i \in [1, h]$. Hence there exist $s_1, \ldots, s_h \in \{- 1, 1\}$ and $a_1, \ldots, a_h \in A$ such that
	\[x_i = s_ia_i\]
	for each $i \in [1, h]$. Since $x_i^2 \neq x_j^2$ for each $i \neq j$, it follows that $a_i^2 \neq a_j^2$ for each $i \neq j$. Hence $a_1, \ldots, a_h$ are distinct element of $A$. Thus
	\[x_1 + \cdots + x_h = s_1a_1 + \cdots + s_ha_h \in h_{\pm}^{\wedge} A.\]
	Therefore,
	\[S \subseteq h_{\pm}^{\wedge} A,\]
	and so it follows from Theorem \ref{liu-sun-poly-thm-2002} that
	\begin{align*}
		|h_{\pm}^{\wedge} A| \geq |S| &\geq K + 1 = \theta + 1.\\	
	\end{align*}

	\noindent {\textbf{Case 2}} ($hk - h^2 + 1  < p(\mathbb{F}) < \theta + 1$). In this case, it is given that $\ell$ is the least positive integer such that 
	\[\theta - \ell h + 1 \leq p(\mathbb{F}) < \theta - (\ell - 1)h + 1.\]	
	Let $r$ be the nonnegative integer such that $p(\mathbb{F}) = \theta - \ell h + r + 1$. Then $0 \leq r < h$. Now if $\theta - \ell h + 1 \leq hk - h^2 + 1 < p(\mathbb{F})$, then it follows from Theorem \ref{alon-nath-ruzsa} that
	\[|h_{\pm}^{\wedge} A| \geq hk - h^2 + 1 = \max(hk - h^2 + 1, \theta - \ell h + 1).\]
	Now assume that $hk - h^2 + 1 < \theta - \ell h + 1 \leq  p(\mathbb{F})$. Note that $|A \cup (-A)| = 2k - |A \cap (-A)|$. Since  $hk - h^2 + 1 < p(\mathbb{F})$ and $p(\mathbb{F}) = \theta - \ell h + r + 1$, it follows that
	\begin{align}\label{rss-field-thm:3-eq1}
		h(|A \cup (-A)| - \ell) = 2hk - h|A \cap (- A)| - \ell h  &= \theta + \frac{h(3h - 1)}{2} -  \ell h \notag \\
		&\geq \frac{h(3h - 1)}{2} + hk - h^2 + 1 \geq h^2 + h.
	\end{align}
	Let 
	\[k' = |A \cup (-A)| - \ell =  2k  - |A \cap (- A)| - \ell.\]
	Then it follows from \eqref{rss-field-thm:3-eq1} that
	\[hk' =  2hk  - h|A \cap (- A)| - \ell h \geq h^2 + h,\]
	and so
	\[k' \geq h + 1.\]
	Let $A_1, \ldots, A_h$  be subsets of $A \cup (- A)$ such that 
	\[|A_i| \geq k' - h + i\]
	for $i \in [1, h]$. Then $A_1, \ldots, A_h$ are nonempty subsets $A \cup (- A)$. Let 
	\[T = \{a_1 + \cdots + a_h : a_{i} \in A_i, i = 1, \ldots, h, a_i^2 \neq a_j^2 ~\text{if}~ i \neq j \}.\]
	Then it is easy to verify that
	\[T \subseteq h_{\pm}^{\wedge} A,\]
	and so
	\[|h_{\pm}^{\wedge} A| \geq |T|.\]
	By applying Theorem \ref{liu-sun-poly-thm-2002}, we get
	\[|h_{\pm}^{\wedge} A| \geq (k' - 1) h - \frac{3h(h - 1)}{2} + 1 = \theta - \ell h + 1 = \max (hk - h^2 + 1, \theta - \ell h + 1).\]
	Thus in both the cases, we have
	\[|h_{\pm}^{\wedge} A| \geq \max (hk - h^2 + 1, \theta - \ell h + 1).\]
	This completes the proof.
\end{proof}	

The following example shows that we can not expect the lower bound $\min (p, 2hk - h^2 + 1)$ on the size of $h_{\pm}^\wedge A$ in $\mathbb{Z}_p$ if $A \cap (-A) = \varnothing$.
\begin{example}
	Let $h = 2, k = 5$, and let $A = \{1, 2, 3, 4, 5\} \subseteq \mathbb{Z}_{17}$. Then a computation shows that
	\[|2_{\pm}^\wedge A| = 16 = 2hk - \frac{h(3h - 1)}{2} + 1 = \min (p, 2hk - \frac{h(3h - 1)}{2} + 1).\]
	Here
	\[|2_{\pm}^\wedge A| = 16 < \min (p, 2hk - h^2 + 1).\]
\end{example}

The next example shows that the lower bound in Theorem \ref{rss-field-thm:3} is sharp even if $A \cap (-A) \neq \varnothing$.
\begin{example}
	Let $h = 3, k = 9$, and let $A = \{0, 1, 3, 5, 7, 9, 11, 13, 15\} \subseteq \mathbb{Z}_{41}$. Then $A \cap (- A)| = \{0\}$, and a computation shows that
	$3_{\pm}^\wedge A = \mathbb{Z}_{41} \setminus \{0\}$,
	and so	
	\[|3_{\pm}^\wedge A| = 40 = 2hk - \frac{h(3h - 1)}{2} - h |A \cap (- A)|+ 1 = \min \biggl(p, 2hk - \frac{h(3h - 1)}{2} + 1\biggr).\]
	In this example also, we have
	\[|3_{\pm}^\wedge A| = 40 < \min (p, 2hk - h^2 + 1).\]
\end{example}

\subsection{Restricted $[0, h]$-fold signed sumset $[0, h]_{\pm}^{\wedge} A$ in groups and fields}
In this subsection, we establish some results for the restricted $[0, h]$-fold signed sumset $[0, h]_{\pm}^{\wedge} A$ in groups and fields which gives the lower bound on the size of this sumset. The proof will require the following two results. 

\begin{theorem}[{\cite[Du and Pan]{du-pan2024}}] \label{du-pan-res}
	Let $A$ be a nonempty subset of a finite additive abelian group $G$ with $|G| > 1$. Let $h$ be a positive integer with $h \leq |A|$. Then
	\[|h^\wedge A| \ge \min (p(G), \, h|A| - h^2 + 1).\]
\end{theorem}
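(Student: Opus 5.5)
The plan is to reduce the statement to the two classical ingredients it generalizes: Theorem \ref{alon-nath-ruzsa} in prime-order groups, and an Erdős--Heilbronn-type bound for restricted sums in arbitrary finite abelian groups (Károlyi's theorem $|A \dotplus B| \geq \min(p(G), |A|+|B|-3)$). The reduction is an induction on $|G|$ through a subgroup chain. Write $k = |A|$.

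\textbf{Preliminary reductions.} The case $h=1$ is trivial. The case $h = k$ is also trivial, since the bound $hk - h^2 + 1 = 1$. Since $h^\wedge A = \sigma(A) - (k-h)^\wedge A$ with $\sigma(A) = \sum_{a \in A} a$, we have $|h^\wedge A| = |(k-h)^\wedge A|$. The quantity $hk - h^2 = h(k-h)$ is symmetric under $h \leftrightarrow k-h$, so we may assume $2 \leq h \leq k/2$. We may also assume $hk - h^2 + 1 > 1$; otherwise there is nothing to prove.

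\textbf{Inductive step via a subgroup of prime index.} If $G \cong \mathbb{Z}_p$, the statement is exactly Theorem \ref{alon-nath-ruzsa}, applied in the field $\mathbb{F}_p$. Otherwise, choose a subgroup $H \leq G$ with $G/H \cong \mathbb{Z}_q$ for a prime $q$, where $q \geq p(G)$ and $p(H) \geq p(G)$. Partition $A$ into its nonempty intersections with cosets of $H$:
\[
A = A_1 \cup \cdots \cup A_t, \qquad A_i \subseteq x_i + H,\quad |A_1| \geq \cdots \geq |A_t|.
\]
\begin{enumerate}
\item If $t = 1$, translate $A$ into $H$ and apply induction. The translate of $h^\wedge A$ is $h^\wedge(A - x_1)$, and $p(H) \geq p(G)$.
\item If $t \geq 2$, write $h = h_1 + \cdots + h_t$ with $0 \leq h_i \leq |A_i|$. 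The sets
\[
h_1^\wedge A_1 + \cdots + h_t^\wedge A_t
\]
lie in $h^\wedge A$ and sit in the coset $\sum_i h_i x_i + H$.
\end{enumerate}
By varying the vector $(h_i)$ along a chain, moving one unit at a time from one block to another, we obtain many distinct cosets of $H$. Their number is controlled by the Cauchy--Davenport theorem in $G/H$, applied to the images of the restricted sums. Within each coset, we apply the induction hypothesis to a single block, bounding $|h_i^\wedge A_i|$ from below, and add the contributions of the cosets. This is the Károlyi-style counting argument.

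\textbf{Main obstacle.} The hard step is the bookkeeping in the last part. We must show that the sum over the cosets reached is at least $\min(p(G), hk - h^2 + 1)$. The danger is that a single block $A_i$ may be small, or that the number of cosets reached saturates at $q$. The saturated case gives at least $q \geq p(G)$ elements, which is fine, but only if every reached coset contains at least one element.

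In the unsaturated case, I would first try the greedy choice used by Károlyi for $h = 2$:
\begin{enumerate}
\item Fill $h$ into the largest blocks first.
\item Lose at most $h$ elements per change of coset, i.e.\ per transfer of one unit from $h_i$ to $h_{i+1}$.
\end{enumerate}
Then compare the resulting telescoping sum with $h(k-h) + 1$, using $hk - h^2 = h(k-h)$ and the assumption $h \leq k/2$.

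If this greedy estimate falls short when several blocks have size smaller than $h$, the fallback is an induction on $h$. Pick $a \in A_t$ and use
\[
h^\wedge A \supseteq \bigl(a + (h-1)^\wedge(A\setminus\{a\})\bigr) \cup h^\wedge(A \setminus \{a\}),
\]
and show that the second set contributes at least $h$ new elements. This gives the increment $h(k-h) - h(k-1-h) = h$ when $k$ increases by one.
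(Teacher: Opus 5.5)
The paper does not prove this theorem. It is quoted from Du and Pan and used only as a black box in Theorem~\ref{rss-field-thm:5}, so there is no in-paper argument to compare with, and your proposal has to stand on its own. Several of your steps are fine: the cases $h\in\{1,k\}$, the complement identity $h^\wedge A=\sigma(A)-(k-h)^\wedge A$, the base case $G\cong\mathbb{Z}_p$ via Theorem~\ref{alon-nath-ruzsa}, and the case $t=1$.

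\textbf{The gap is the case $t\ge 2$.} That case is the whole theorem, and you only outline it before calling it the main obstacle. The ingredients you name are also not sufficient, even in the simplest instance.

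\emph{Within-coset bounds.} Take $h=2$, $k=4$, $p(G)\ge 5$, and let $A$ meet two cosets of $H$ in two elements each. The reachable cosets are $2x_1+H$, $x_1+x_2+H$ and $2x_2+H$. Bounding each by the induction hypothesis for a single block gives $1+2+1=4<5=hk-h^2+1$. For mixed vectors $(h_i)$ you would have to bound $|h_1^\wedge A_1+\cdots+h_t^\wedge A_t|$ by combining Theorem~\ref{devos-extension} in $H$ with the inductive bounds on each $|h_i^\wedge A_i|$.

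\emph{Counting cosets.} The number of reachable cosets is not controlled by Cauchy--Davenport in $G/H$. Each $h_i^\wedge A_i$ maps to the single point $h_i\bar x_i$ of $\mathbb{Z}_q$. The reachable cosets are therefore the restricted $h$-fold sums of the multiset in which $\bar x_i$ occurs $|A_i|$ times. A chain of unit transfers can return to a coset it already visited: transfers $1\to 2$, $2\to 3$, $3\to 1$ bring you back to the start. A chain that only transfers from block $i$ to block $j$ reaches at most $\min(h,|A_i|,|A_j|)+1$ cosets.

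\emph{What is missing.} You need an inequality that trades the number of cosets reached against the within-coset sizes and always yields $\min(p(G),h(k-h)+1)$, including when many blocks have fewer than $h$ elements. That inequality is the actual content of the theorem, not bookkeeping.

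\textbf{The fallback does not close the gap.} You propose to show that, for $a\in A_t$, the sums containing $a$ add at least $h$ new elements to $h^\wedge(A\setminus\{a\})$. This is false for an arbitrary such $a$. Take $G=\mathbb{Z}_p^2$ with $p\ge 11$, $H=\{0\}\times\mathbb{Z}_p$, and $A=\{0,1,2,3,4\}\times\{0\}$. Every block is a singleton, so $a=(1,0)$ is a legitimate choice of element in $A_t$. Yet $|2^\wedge A|=7$ while $|2^\wedge(A\setminus\{a\})|=6$, an increment of $1<h$.

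What the induction really needs is $|h^\wedge A|\ge h(k-1-h)+1+h$. Establishing that, or choosing an element that guarantees it, is exactly the Erd\H{o}s--Heilbronn-type difficulty. You give no mechanism for it in a general finite abelian group. As it stands, the proposal is a strategy rather than a proof: the decisive estimate for $t\ge 2$ is absent.
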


The following lemma will be useful for the proofs.

\begin{lemma}\label{rss-field-lem:1}
	Let $h$ be a positive integers. Let $A$ be a nonempty finite subset of an additive abelian group $G$ with $h \leq |A|$. Then
	\begin{equation}\label{rss-field-lem:1-eq1}
		[0, h]_{\pm}^\wedge A \supseteq [0, h]^\wedge (A \cup (- A) \cup \{0\}).
	\end{equation}	
	In particular, if $A \cap (- A) = \varnothing$, then
	\begin{equation}\label{rss-field-lem:1-eq2}
		[0, h]_{\pm}^\wedge A = [0, h]^\wedge (A \cup (- A) \cup \{0\}).
	\end{equation}
\end{lemma}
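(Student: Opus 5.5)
The plan is to unwind the definitions on both sides and match representations element by element. Write $B = A \cup (-A) \cup \{0\}$. An element of $[0, h]^\wedge B$ is a sum $b_1 + \cdots + b_j$ with $0 \le j \le h$ and $b_1, \ldots, b_j$ distinct elements of $B$. An element of $[0, h]_{\pm}^\wedge A$ is a sum $\sum \lambda_i a_i$ with $\lambda_i \in \{-1, 0, 1\}$ and $\sum |\lambda_i| \le h$. The inclusion \eqref{rss-field-lem:1-eq1} then amounts to rewriting any sum of distinct elements of $B$, with at most $h$ terms, as a signed sum of distinct elements of $A$ using at most $h$ nonzero coefficients.

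For the inclusion, start from $x = b_1 + \cdots + b_j \in [0,h]^\wedge B$.
\begin{enumerate}
\item Discard any summand equal to $0$; this lowers $j$ and does not change $x$.
\item Each remaining $b_r$ lies in $A \cup (-A)$, so write $b_r = s_r a_r$ with $s_r \in \{\pm 1\}$ and $a_r \in A$, choosing $s_r = 1$ whenever $b_r \in A$.
\item Two indices may share the same $a$ only if $\{b_r, b_t\} = \{a, -a\}$. Such a pair sums to $0$, so remove it; the number of terms only drops.
\item After these removals, $x = \sum \lambda_i a_i$ with the $a_i$ distinct, $\lambda_i \in \{-1,0,1\}$ and $\sum |\lambda_i| \le j \le h$. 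Hence $x \in (\sum|\lambda_i|)_{\pm}^\wedge A \subseteq [0,h]_{\pm}^\wedge A$.
\end{enumerate}

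There is one subtlety: $a_r$ could equal $-a_r$, i.e.\ an element of order $2$, or $0 \in A$. In that case $a$ and $-a$ coincide as elements of $B$, so they cannot both appear as distinct summands, and the cancellation in step 3 is never needed for them. No separate case is required.

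For the equality \eqref{rss-field-lem:1-eq2} under $A \cap (-A) = \varnothing$, we need the reverse inclusion. Take $x = \sum_{i \in I} \lambda_i a_i$ with $\lambda_i = \pm 1$ for $i \in I$, where $|I| \le h$ and the $a_i$ are distinct. Each $\lambda_i a_i$ lies in $A \cup (-A)$, so the task is to check that these summands are distinct. Suppose $\lambda_i a_i = \lambda_t a_t$ with $i \ne t$. The case $\lambda_i = \lambda_t$ gives $a_i = a_t$, which is excluded. Otherwise $a_i = -a_t \in A \cap (-A)$, contradicting the hypothesis. So $x$ is a sum of $|I| \le h$ distinct elements of $B$, hence lies in $[0,h]^\wedge B$. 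Note that $0 \notin A$ here as well, since $0 \in A$ would put $0 \in A \cap (-A)$, but this is not needed.

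I expect the main obstacle to be bookkeeping rather than anything substantive. The cancellation in the forward direction must be checked to preserve both distinctness and the bound on the number of terms. I would present that step carefully and treat the rest as routine. The hypothesis $h \le |A|$ is not actually used, beyond ensuring that the sets involved are meaningful.
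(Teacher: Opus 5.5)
Your proof is correct and takes the same route as the paper. Your pair-cancellation in step 3 is a self-contained proof of the inclusion $s^\wedge(A \cup (-A)) \subseteq \bigcup_{t \ge 0}(s-2t)_{\pm}^\wedge A$, which the paper cites from Bajnok's book. Your step 1 plays the role of the paper's inclusion $i^\wedge(A\cup(-A)\cup\{0\}) \subseteq (i-1)^\wedge(A\cup(-A)) \cup i^\wedge(A\cup(-A))$, and your distinctness check supplies the verification the paper leaves as ``easy'' for the reverse inclusion. You are also right that the hypothesis $h \le |A|$ plays no role.
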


\begin{proof} If $s$ is a negative integer, then we define $s^\wedge (A) = \varnothing$ and $s_{\pm}^\wedge A = \varnothing$. For a nonnegative integer $s$, it is proved in \cite[p. 45]{bajnok2018}) that
	\[s^\wedge (A \cup (- A)) \subseteq \bigcup_{t = 0}^{\infty} (s - 2t)_{\pm}^\wedge A.\]
	Therefore, if $h$ is a positive integer, then for each $i \in [0, h]$, we can write 
	\[(h - i)^\wedge (A \cup (- A)) \bigcup_{t=0}^{\infty}\subseteq (h - i - 2t)_{\pm}^\wedge A.\]
	Hence for each $i \in [1, h]$, we have
	\begin{equation}\label{rss-field-lem:1-eq3}
		i^\wedge (A \cup (- A)) \subseteq [0, h]_{\pm}^\wedge A.	
	\end{equation}
	Also, it is to verify that for each $i \in [1, h]$, 
	\begin{equation}\label{rss-field-lem:1-eq4}
		i^\wedge (A \cup (- A) \cup \{0\}) \subseteq (i - 1)^\wedge (A \cup (- A)) \cup i^\wedge (A \cup (- A)).
	\end{equation}
	Therefore, it follows from \eqref{rss-field-lem:1-eq3} and \eqref{rss-field-lem:1-eq4} that
	\begin{equation}\label{rss-field-lem:1-eq5}
		[0, h]^\wedge (A \cup (- A) \cup \{0\}) \subseteq [0, h]_{\pm}^\wedge A.
	\end{equation}
	This proves \eqref{rss-field-lem:1-eq1}. 
	
	Now if $A \cap (- A) = \varnothing$, then it is easy to verify that
	\[i_{\pm}^\wedge A \subseteq i^\wedge (A \cup (- A) \cup \{0\})\]
	for each $i \in [0, h]$. Hence
	\begin{equation}\label{rss-field-lem:1-eq6}
		[0, h]_{\pm}^\wedge A \subseteq [0, h]^\wedge (A \cup (- A) \cup \{0\}).
	\end{equation}
	From \eqref{rss-field-lem:1-eq5} and \eqref{rss-field-lem:1-eq6}, we get  
	\[[0, h]_{\pm}^\wedge A = [0, h]^\wedge (A \cup (- A) \cup \{0\}).\]
	This proves \eqref{rss-field-lem:1-eq2}.
\end{proof}

Using these results, we can easily prove the following theorem in a finite additive abelian group $G$.

\begin{theorem}\label{rss-field-thm:5}
	Let $G$ be a finite additive abelian group $|G| > 1$. Let $h$ and $m$ be a positive integers with $2 \leq h \leq m$. Let $A$ be a nonempty finite subset of $G$ with $|A| = m$. Then
	\begin{equation}\label{rss-field-thm:5-eq1}
		[0, h]_{\pm}^\wedge A \geq
		\begin{cases}
			\min (p(G), 2hm - h^2 - h|A \cap (- A)| + 1), & ~\text{if}~ 0 \in A;\\
			\min (p(G), 2hm - h^2 - h|A \cap (- A)| + h + 1), & ~\text{if}~ 0 \not\in A.  
		\end{cases}
	\end{equation}
\end{theorem}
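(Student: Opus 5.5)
The plan is to reduce the statement to the Du--Pan bound (Theorem \ref{du-pan-res}) by means of the inclusion in Lemma \ref{rss-field-lem:1}. Put $A^{*} = A \cup (-A) \cup \{0\}$. By \eqref{rss-field-lem:1-eq1},
\[
[0,h]_{\pm}^\wedge A \supseteq [0,h]^\wedge A^{*} \supseteq h^\wedge A^{*},
\]
so it suffices to bound $|h^\wedge A^{*}|$ from below. We have $|A \cup (-A)| = 2m - |A \cap (-A)|$. Hence $|A^{*}| = 2m - |A\cap(-A)|$ if $0 \in A$, and $|A^{*}| = 2m - |A\cap(-A)| + 1$ if $0 \notin A$.

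Next I check that Du--Pan applies, which needs $h \le |A^{*}|$. Note that $|A \cup (-A)| \ge |A| = m \ge h$, so $|A^{*}| \ge m \ge h$.

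Applying Theorem \ref{du-pan-res} to $A^{*}$ then gives $|h^\wedge A^{*}| \ge \min(p(G), h|A^{*}| - h^2 + 1)$.
\begin{itemize}
\item If $0 \in A$, the right-hand side is $\min(p(G), 2hm - h|A\cap(-A)| - h^2 + 1)$, which is the first case of \eqref{rss-field-thm:5-eq1}.
\item If $0 \notin A$, the extra element $0$ contributes $h$, giving $\min(p(G), 2hm - h|A\cap(-A)| - h^2 + h + 1)$, which is the second case.
\end{itemize}
Since $h^\wedge A^{*} \subseteq [0,h]_{\pm}^\wedge A$, both bounds transfer directly to $|[0,h]_{\pm}^\wedge A|$.

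I do not expect a real obstacle; the proof is bookkeeping. The one step to check carefully is the inclusion $h^\wedge A^{*} \subseteq [0,h]_{\pm}^\wedge A$. It is contained in \eqref{rss-field-lem:1-eq1} by taking only the $h$-th layer of the union $[0,h]^\wedge A^{*}$. If one worries about that lemma's use of the cited inclusion from \cite{bajnok2018}, it can be verified directly. A sum of $h$ distinct elements of $A^{*}$ may contain pairs $a, -a$ with $a \in A \cap (-A)$; these cancel to $0$. Dropping them, and dropping $0$ if it occurs, leaves a signed sum $\sum \lambda_i a_i$ with $\lambda_i \in \{-1,0,1\}$ over distinct elements of $A$ with $\sum |\lambda_i| \le h$. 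Such a sum lies in $[0,h]_{\pm}^\wedge A$. Finally, the degenerate case $|A^{*}| = h$ is harmless, since Du--Pan only requires $h \le |A^{*}|$.
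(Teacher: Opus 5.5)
Your proposal is correct and is essentially the paper's proof: Lemma \ref{rss-field-lem:1} gives $h^\wedge(A\cup(-A)\cup\{0\})\subseteq[0,h]_{\pm}^\wedge A$, Theorem \ref{du-pan-res} is applied to $A\cup(-A)\cup\{0\}$, and the two cases follow from $|A\cup(-A)|=2m-|A\cap(-A)|$ (your check that $h\le|A^{*}|$ is a detail the paper leaves implicit). In your optional direct check of the inclusion, the cancelling pairs to drop should be all $\{x,-x\}\subseteq A^{*}$ with $x\neq -x$, not only those with $x\in A\cap(-A)$ (for instance, $a\in A$ with $-a\notin A$ still puts both $a$ and $-a$ into $A^{*}$); once all such pairs are dropped, the remaining terms do come from distinct elements of $A$ and the argument goes through.
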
	

\begin{proof}
	It follows from Lemma \ref{rss-field-lem:1} that
	\[|[0, h]_{\pm}^\wedge A| \geq |h^\wedge (A \cup (- A) \cup \{0\})|.\]
	Now an application of Theorem \ref{du-pan-res} yields
	\begin{equation}\label{rss-field-thm:5-eq2}
		[0, h]_{\pm}^\wedge A \geq \min (p(G), h|A \cup (- A) \cup \{0\}| - h^2 + 1).
	\end{equation}
	Since $|A \cup (-A)| = 2m - |A \cap (-A)|$, the inequalities in \eqref{rss-field-thm:5-eq1} follows easily from \eqref{rss-field-thm:5-eq2}.
\end{proof}

For a positive integer $m$ and a set of nonnegative integer $H$, we define
\[{\rho}_{{\pm}_\mathrm{asym}}^{\wedge}(G, m, H) = \min \{|H_{\pm}^\wedge A| : A \in \mathrm{Asym}(G, m)\},\]
\[{\rho}_{{\pm}_\mathrm{sym}}^{\wedge}(G, m, H) = \min \{|H_{\pm}^\wedge A| : A \in \mathrm{Sym}(G, m)\}\]
and
\[{\rho}_{{\pm}_\mathrm{nsym}}^{\wedge}(G, m, H) = \min \{|H_{\pm}^\wedge A| : A \in \mathrm{Nsym}(G, m)\}.\]

The following corollary follows immediately from the above theorem.
\begin{corollary}\label{rss-field-thm:4}
	Let $G$ is a finite additive abelian group with $|G| > 1$. Let $m$ and $h$ be integers such that $2 \leq h \leq m$. Then
	\begin{align*}
		{\rho}_{{\pm}_\mathrm{asym}}^{\wedge}(G, m, [0, h]) &\geq \min (p(G), 2hm - h^2 + h + 1), \\
		{\rho}_{{\pm}_\mathrm{sym}}^{\wedge}(G, m, [0, h]) &\geq \min (p(G), hm - h^2 + 1), \\
		{\rho}_{{\pm}_\mathrm{nsym}}^{\wedge}(G, m, [0, h]) &\geq \min (p(G), hm - h^2 + h + 1).
	\end{align*}
\end{corollary}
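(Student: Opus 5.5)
The corollary is read off from Theorem~\ref{rss-field-thm:5} by computing $|A \cap (-A)|$ and deciding whether $0 \in A$ in each class. In each case I would bound $|[0,h]_{\pm}^\wedge A|$ for an arbitrary $A$ of the relevant class and then take the minimum over the class. A more robust alternative avoids the two-case formula: Lemma~\ref{rss-field-lem:1} gives $|[0,h]_{\pm}^\wedge A| \geq |h^\wedge (A \cup (-A) \cup \{0\})|$, and Theorem~\ref{du-pan-res} then gives
\[|[0,h]_{\pm}^\wedge A| \geq \min\bigl(p(G),\, h\,|A \cup (-A) \cup \{0\}| - h^2 + 1\bigr).\]
This needs $h \leq |A \cup (-A) \cup \{0\}|$, which holds since that set contains $A$ and $h \leq m$. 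So it suffices to bound $N := |A \cup (-A) \cup \{0\}|$ from below in each class.

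\textbf{Asymmetric class.} If $A \in \mathrm{Asym}(G,m)$, then $A \cap (-A) = \varnothing$. Hence $0 \notin A$, since otherwise $0 \in A \cap (-A)$. Therefore $|A \cup (-A)| = 2m$ and $N = 2m+1$, which gives the bound $h(2m+1) - h^2 + 1 = 2hm - h^2 + h + 1$. This agrees with the second case of Theorem~\ref{rss-field-thm:5} with $|A \cap (-A)| = 0$.

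\textbf{Symmetric class.} If $A \in \mathrm{Sym}(G,m)$, then $A = -A$, so $A \cup (-A) = A$ and $N \geq m$. This gives the bound $hm - h^2 + 1$.

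\textbf{Near-symmetric class.} If $A \in \mathrm{Nsym}(G,m)$, write $A = B \cup \{a\}$ with $B = -B$, $|B| = m-1$, and $A$ not symmetric. Then $-a \notin A$, since otherwise $A = -A$. Hence $A \cup (-A) = B \cup \{a, -a\}$ has exactly $m+1$ elements, and $N \geq m+1$. This gives the bound $h(m+1) - h^2 + 1 = hm - h^2 + h + 1$.

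In each case the displayed inequality holds for every $A$ in the class, so it holds for the minimum, which proves the three inequalities.

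The only delicate point is the near-symmetric case. There one must check that $a \neq -a$ (which follows from $-a \notin A$) and count $|A \cup (-A)|$ directly rather than going through $|A \cap (-A)|$, which equals $m-1$ here. The reason is that the case split on $0 \in A$ in Theorem~\ref{rss-field-thm:5} would otherwise give only the weaker bound $hm - h^2 + 1$ when $0 \in A$. Using the bound on $N$ directly sidesteps that issue in all three cases.
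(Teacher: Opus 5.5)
Your argument is correct and is essentially the paper's: the paper reads the corollary off Theorem~\ref{rss-field-thm:5}, whose proof is exactly Lemma~\ref{rss-field-lem:1} followed by Theorem~\ref{du-pan-res}; you inline that argument and count $|A \cup (-A) \cup \{0\}|$ class by class. Your closing remark is mistaken, though. For $A \in \mathrm{Nsym}(G,m)$ with $0 \in A$, substituting $|A \cap (-A)| = m-1$ into the first case of Theorem~\ref{rss-field-thm:5} gives $2hm - h^2 - h(m-1) + 1 = hm - h^2 + h + 1$, and the case $0 \notin A$ gives the larger $hm - h^2 + 2h + 1$, so reading off the theorem directly already yields the third inequality and there is no delicate point to sidestep.
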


Next we prove some results for $[0, h]_{\pm}^\wedge A$ in an arbitrary field $\mathbb{F}$.

\begin{theorem}\label{rss-field-thm:6}
	Let $\mathbb{F}$ be a field. Let $h$ and $m$ be a positive integers with $h \leq m$. Let $A$ be a nonempty finite subset of $\mathbb{F}$ with $|A| = m$. Then
	\begin{equation}\label{rss-field-thm:6-eq1}
		[0, h]_{\pm}^\wedge A \geq
		\begin{cases}
			\min (p(\mathbb{F}), 2hm - h^2 - h|A \cap (- A)| + 1), & ~\text{if}~ 0 \in A;\\
			\min (p(\mathbb{F}), 2hm - h^2 - h|A \cap (- A)| + h + 1), & ~\text{if}~ 0 \not\in A.  
		\end{cases}
	\end{equation}
\end{theorem}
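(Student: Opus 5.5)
The plan is to copy the proof of Theorem \ref{rss-field-thm:5}, with the group-theoretic bound of Du and Pan (Theorem \ref{du-pan-res}) replaced by the field bound of Dias da Silva--Hamidoune / Alon--Nathanson--Ruzsa (Theorem \ref{alon-nath-ruzsa}). Lemma \ref{rss-field-lem:1} is stated for an arbitrary abelian group, so it applies to the additive group of $\mathbb{F}$. It gives
\[
[0,h]_{\pm}^\wedge A \supseteq [0,h]^\wedge (A\cup(-A)\cup\{0\}) \supseteq h^\wedge (A\cup(-A)\cup\{0\}).
\]
Write $A^{*} = A\cup(-A)\cup\{0\}$. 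Since $|A\cup(-A)| = 2m - |A\cap(-A)|$, we get $|A^{*}| = 2m - |A\cap(-A)|$ if $0\in A$. If $0\notin A$, then $|A^{*}| = 2m - |A\cap(-A)| + 1$.

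Next we check that Theorem \ref{alon-nath-ruzsa} applies to $A^{*}$. This needs $2 \le h \le |A^{*}|$.
\begin{itemize}
\item Since $|A^{*}| \ge |A| = m \ge h$, the upper condition holds.
\item If $h = 1$, the bound is handled directly. We have $1^\wedge A^{*} = A^{*}$, and the claimed quantity is at most $2m - |A\cap(-A)| = |A^{*}|$ when $0\in A$. When $0\notin A$ it is at most $2m - |A\cap(-A)| + 1 = |A^{*}|$. So the inequality is trivial in this case.
\end{itemize}
For $h\ge 2$, Theorem \ref{alon-nath-ruzsa} gives
\[
|[0,h]_{\pm}^\wedge A| \ge |h^\wedge A^{*}| \ge \min\bigl(p(\mathbb{F}),\, h|A^{*}| - h^2 + 1\bigr).
\]
Substituting the two values of $|A^{*}|$ gives the two cases of \eqref{rss-field-thm:6-eq1}. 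When $0\in A$ the bound is $2hm - h|A\cap(-A)| - h^2 + 1$. When $0\notin A$ the extra element $0$ contributes the additional $+h$.

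The only step needing care is the containment from Lemma \ref{rss-field-lem:1}, and it is already available. The remaining issue is minor: Theorem \ref{alon-nath-ruzsa} carries no hypothesis on $p(\mathbb{F})$. When $p(\mathbb{F}) = \infty$ (characteristic zero) the minimum simply equals the second term, so no separate argument is needed.
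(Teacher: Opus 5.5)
Your proposal is correct and follows the paper's proof exactly: apply Lemma~\ref{rss-field-lem:1} to reduce to $h^\wedge(A\cup(-A)\cup\{0\})$, invoke Theorem~\ref{alon-nath-ruzsa}, and substitute $|A\cup(-A)| = 2m - |A\cap(-A)|$, adding $1$ when $0\notin A$. Your separate treatment of $h=1$ is a small improvement. The paper applies Theorem~\ref{alon-nath-ruzsa} without noting that it requires $h\ge 2$, while the statement allows $h=1$.
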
	

\begin{proof}
	It follows from Lemma \ref{rss-field-lem:1} that
	\[|[0, h]_{\pm}^\wedge A| \geq |h^\wedge (A \cup (- A) \cup \{0\})|.\]
	Now an application of Theorem \ref{alon-nath-ruzsa} yields
	\begin{equation}\label{rss-field-thm:6-eq2}
		[0, h]_{\pm}^\wedge A \geq \min (p(\mathbb{F}), h|A \cup (- A) \cup \{0\}| - h^2 + 1).
	\end{equation}
	Since $|A \cup (-A)| = 2m - |A \cap (-A)|$, the inequalities in \eqref{rss-field-thm:6-eq1} follows easily from \eqref{rss-field-thm:6-eq2}.
\end{proof}

The following corollaries immediately follows from Theorem \ref{rss-field-thm:6}.

\begin{corollary}
	Let $\mathbb{F}$ be a field. Let $h$ and $m$ be positive integers with $h \leq m$. Then
	\begin{align*}
		{\rho}_{{\pm}_\mathrm{asym}}^{\wedge}(\mathbb{F}, m, [0, h]) &\geq \min (p(\mathbb{F}), 2hm - h^2 + h + 1), \\
		{\rho}_{{\pm}_\mathrm{sym}}^{\wedge}(\mathbb{F}, m, [0, h]) &\geq \min (p(\mathbb{F}), hm - h^2 + 1), \\
		{\rho}_{{\pm}_\mathrm{nsym}}^{\wedge}(\mathbb{F}, m, [0, h]) &\geq \min (p(\mathbb{F}), hm - h^2 + h + 1).
	\end{align*}
	
\end{corollary}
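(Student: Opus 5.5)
The statement is the last corollary: lower bounds for $\rho^{\wedge}_{\pm_{\mathrm{asym}}}$, $\rho^{\wedge}_{\pm_{\mathrm{sym}}}$ and $\rho^{\wedge}_{\pm_{\mathrm{nsym}}}$ of $(\mathbb{F},m,[0,h])$. I will read each off from Theorem \ref{rss-field-thm:6}. That theorem gives a bound for a single set $A$, and the bound depends only on $s=|A\cap(-A)|$ and on whether $0\in A$. So it suffices, in each of the three classes, to find the smallest such bound over all admissible $A$. The minimum over the class is then at least that smallest value, because $\min(p(\mathbb{F}),\cdot)$ is monotone in its second argument.

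\emph{Asymmetric class.} If $A\in\mathrm{Asym}(\mathbb{F},m)$, then $s=0$ and $0\notin A$, since $0\in A$ would give $0\in A\cap(-A)$. The second case of Theorem \ref{rss-field-thm:6} then gives the bound $\min(p(\mathbb{F}),2hm-h^2+h+1)$.

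\emph{Symmetric class.} If $A\in\mathrm{Sym}(\mathbb{F},m)$, then $s=m$.
\begin{itemize}
\item If $0\in A$, the first case gives $2hm-h^2-hm+1=hm-h^2+1$.
\item If $0\notin A$, the second case gives $hm-h^2+h+1$, which is larger.
\end{itemize}
The minimum of the two is $\min(p(\mathbb{F}),hm-h^2+1)$, as claimed.

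\emph{Near-symmetric class.} If $A\in\mathrm{Nsym}(\mathbb{F},m)$, write $A=B\cup\{a\}$ with $B=-B$, $|B|=m-1$, and $A$ not symmetric. Then $-a\notin A$, so $a\neq 0$ and $s=m-1$.
\begin{itemize}
\item If $0\in A$, then $0\in B$ and the first case gives $2hm-h^2-h(m-1)+1=hm-h^2+h+1$.
\item If $0\notin A$, the second case gives $hm-h^2+2h+1$, which is larger.
\end{itemize}
The minimum is $\min(p(\mathbb{F}),hm-h^2+h+1)$, as claimed.

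The only point needing care is $s$ in the near-symmetric case. One must check that $a\ne -a$; this holds because otherwise $A$ would be symmetric. With that, $A\cap(-A)=B$ exactly. Beyond this, everything is a direct substitution into Theorem \ref{rss-field-thm:6} together with the monotonicity of $\min$.
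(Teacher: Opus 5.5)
Your proposal is correct and follows the paper's route: the paper simply says the corollary follows immediately from Theorem \ref{rss-field-thm:6}, and your case analysis is exactly the substitution it leaves to the reader. The three cases are $s=0$ with $0\notin A$ for asymmetric sets; $s=m$ for symmetric sets, where the case $0\in A$ gives the minimum; and $s=m-1$ for near-symmetric sets, where $0\in A$ forces $0\in B$. The one point that needs justifying, $A\cap(-A)=B$ in the near-symmetric case, you handle correctly: $a\neq -a$ since $A$ is not symmetric, and $-a\notin B$ since $B=-B$ and $a\notin B$.
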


\begin{corollary}
	Let $h$ and $m$ be a positive integers with $h \leq m$. Let $A$ be a nonempty finite subset of $\mathbb{Z}$ with $|A| = m$. Then
	\begin{equation*}\label{cor-eq:1}
		[0, h]_{\pm}^\wedge A \geq
		\begin{cases}
			2hm - h^2 - h|A \cap (- A)| + 1, & ~\text{if}~ 0 \in A;\\
			2hm - h^2 - h|A \cap (- A)| + h + 1, & ~\text{if}~ 0 \not\in A.  
		\end{cases}
	\end{equation*}
\end{corollary}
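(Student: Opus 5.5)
The statement is the last corollary: for a finite set $A\subseteq\mathbb{Z}$ with $|A|=m$ and $h\le m$, it gives the lower bound on $|[0,h]_{\pm}^\wedge A|$ in the two cases $0\in A$ and $0\notin A$. I would derive it from Theorem \ref{rss-field-thm:6} by embedding $\mathbb{Z}$ into the field $\mathbb{Q}$. Since $\mathbb{Q}$ has characteristic zero, it has no nontrivial finite additive subgroup, so $p(\mathbb{Q})=\infty$.

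The key steps are as follows.

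\textbf{Step 1 (transfer to $\mathbb{Q}$).} Regard $A$ as a subset of $\mathbb{Q}$. The sets $[0,h]_{\pm}^\wedge A$ and $A\cap(-A)$ are defined purely additively, so they are the same whether computed in $\mathbb{Z}$ or in $\mathbb{Q}$. Likewise, the condition $0\in A$ is unchanged.

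\textbf{Step 2 (apply the field bound).} Theorem \ref{rss-field-thm:6} with $\mathbb{F}=\mathbb{Q}$ gives the minimum of $p(\mathbb{Q})$ and the two stated expressions. Because $p(\mathbb{Q})=\infty$, the minimum is just the second argument, and we obtain exactly the displayed bounds.

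\textbf{Step 3 (the case $h=1$).} Theorem \ref{rss-field-thm:6} ultimately rests on Theorem \ref{alon-nath-ruzsa}, which is stated for $h\ge 2$, so I would check $h=1$ directly. In that case $[0,1]_{\pm}^\wedge A=\{0\}\cup A\cup(-A)$. Its size is $2m-s+1$ if $0\notin A$, and $2m-s$ if $0\in A$, where $s=|A\cap(-A)|$. These equal the claimed bounds $2m-s+1$ and $2m-s$ respectively.

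Alternatively, one can argue directly via Lemma \ref{rss-field-lem:1}, which gives $[0,h]_{\pm}^\wedge A\supseteq h^\wedge(A\cup(-A)\cup\{0\})$. The classical integer bound $|h^\wedge B|\ge h|B|-h^2+1$ (for $h\le|B|$) then finishes the argument, using $|A\cup(-A)\cup\{0\}|=2m-s+[0\notin A]$.

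I expect no real obstacle. The only points needing care are confirming that $p(\mathbb{Q})=\infty$ makes the minimum in Theorem \ref{rss-field-thm:6} disappear, and handling the edge case $h=1$ separately.
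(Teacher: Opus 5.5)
Your proposal is correct and follows the paper's route. The paper simply states that this corollary follows immediately from Theorem \ref{rss-field-thm:6}, which is exactly your specialization to $\mathbb{F}=\mathbb{Q}$ with $p(\mathbb{Q})=\infty$. Your direct check of $h=1$ is a worthwhile addition, since the proof of Theorem \ref{rss-field-thm:6} invokes Theorem \ref{alon-nath-ruzsa}, which is stated only for $h\ge 2$.
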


	\end{document}